\theoremstyle{plain}
\newtheorem{theorem}{Theorem}
\newtheorem{lemma}[theorem]{Lemma}
\newtheorem{corollary}[theorem]{Corollary}
\newtheorem{proposition}[theorem]{Proposition}
\newtheorem{observation}[theorem]{Observation}
\theoremstyle{definition}
\newtheorem{example}[theorem]{Example}
\theoremstyle{remark}
\newcommand{\Z}{\mbox{${\mathbb
 Z}$}}
\title{\bf The structure of rainbow-free colorings for linear equations on three variables in $\Z_p$}
\author{Mario Huicochea\\
\small CINNMA\\[-0.8ex]
\small San Isidro 303 Juriquilla Fracc. Villas del Mes\'{o}n\\[-0.8ex] 
\small\tt dym@cimat.mx\\
\and
Amanda Montejano \thanks{Research supported by CONACyT-Project 166306, and PAPIIT IA102013}\\
\small Universidad Nacional Aut\'{o}noma de M\'{e}xico\\[-0.8ex]
\small Facultad de Ciencias UMDI--Juriquilla\\[-0.8ex]
\small Boulevard Juriquilla No. 3001 Juriquilla, Quer\'{e}taro 76230, M\'{e}xico\\[-0.8ex]
\small\tt amandamontejano@ciencias.unam.mx
}
\date{}
\begin{document}

\maketitle


\begin{abstract}

Let $p$ be a prime number and $\Z_p$ be the cyclic group of order $p$.  A coloring of $\Z_p$ is called rainbow--free with respect to a certain equation, if it contains no rainbow solution of the same, that is, a solution whose elements have pairwise distinct colors. In this paper we describe the structure of rainbow--free $3$--colorings of $\Z_p$ with respect to all linear equations on three variables.  
Consequently, we determine those linear equations on three variables for which every $3$--coloring (with nonempty color classes) of $\Z_p$ contains a rainbow solution of it. 

  \bigskip\noindent \textbf{Keywords:} Arithmetic anti--Ramsey theory, rainbow--free colorings.
\end{abstract}

\section{Introduction}

A $k$--\emph{coloring} of a set $X$ is a surjective mapping  $c : X \rightarrow \{1,2,...k\}$, or equivalently  a partition $X=C_1\cup C_2\cup... \cup C_k$, where each nonempty set $C_i$ is called a \emph{color class}. A subset $Y \subseteq X$ is \emph{rainbow} under $c$, if the coloring pairwise assigns distinct colors to the elements of $Y$. The study of the existence of rainbow structures falls into the anti--Ramsey Theory. Canonical versions of this theory prove the existence of either a monochromatic structure, or a rainbow structure. In contrast, in the recent so--called \emph{Rainbow Ramsey Theory} the existence of rainbow structures is guaranteed under some density conditions on the color classes (see \cite{axe,jetall,jnr} and references therein). Beyond this approach, recent works  \cite{llm1, ms} have addressed the problem of describing the shape of colorings containing no rainbow structures, called \emph{rainbow--free} colorings. 

Let $p$ be a prime number and $\Z_p$ be the cyclic group of order $p$. Among other results, Jungi\'{c} et al. \cite{jetall} proved  that every $3$--coloring of $\Z_p$ with the cardinality of the smallest color class greater than four has a rainbow solution of all linear equations in three variables with the only possible exception of $x+y+z=d$. In other words, the authors proved that rainbow--free colorings of $\Z_p$ concerning the equation $a_1x+a_2y+a_3z=b$, where some  $a_i\neq a_j$, are such that the smallest color class has less than four elements. In this work we analyze the ``small cases'' (cases when the smallest color class has one, two or three elements) in order to fully characterize the structure of rainbow--free colorings.

Our main result, Theorem~\ref{thm:main}, implies that, actually, rainbow--free colorings of $\Z_p$ concerning equation  $a_1x+a_2y+a_3z=b$, with some $a_i\neq a_j$, are such that the cardinality of the smallest color class is one. Moreover, Theorem~\ref{thm:main} characterizes the structure of such  colorings. Therefore, we  provide a criterion to decide whether or not, for a given equation and a given prime number, there exists  a rainbow--free coloring. In other words, we classify equations (depending on $a_1,a_2,a_3,b$ and $p$) for which every $3$--coloring contains a rainbow solution  (Corollary~\ref{thm:main2}).   

The paper is organized as follows: In Section~\ref{sec:pre} we establish the notation  and give some preliminary results. In Section~\ref{sec:result} we present our results: First we give the structure characterization of rainbow free colorings of $\Z_p$ concerning equation $x+y+z=b$ (Theorem~\ref{thm:a1=a2=a3}),  which is the only one admitting rainbow--free colorings with large color classes. We point out that Theorem~\ref{thm:a1=a2=a3} is deduced  from known results with relatively little effort. In Theorem~\ref{thm:main} we  give the structure characterization of rainbow--free colorings concerning equation $a_1x+a_2y+a_3z=b$ with some $a_i\neq a_j$. The proof of Theorem~\ref{thm:main} is divided into three parts. In Section~\ref{sec:A=1} we handle the case  when there is a color class of cardinality one, which is the only case where rainbow--free colorings exist. In Sections~\ref{sec:A=2} and \ref{sec:A=3} we discard the cases when the smallest color class has cardinality two or three respectively. As usual in the area, we will use as tools to solve those later cases some inverse results in Additive Number Theory  presented in Section \ref{sec:ant}.


\section{Notation and preliminaries}\label{sec:pre}

Let $p$ be a prime number and $\Z_p$ be the cyclic group of order $p$. Given a set $S\subseteq \Z_p$ and elements $t,d\in \Z_p$, the sets: $S+t:=\{x+t:x\in S\}$ and  $dS:=\{dx:x\in S\}$ are called the $t$--\emph{translation}  and the $d$--\emph{dilation} of $S$ respectively. Concerning the multiplicative group $\Z_p^{\ast}:=\Z_p\setminus \{0\}$, for every $d\in \Z_p^{\ast}$ we denote by $d^{-1}$ the multiplicative inverse of $d$, and by $\langle d\rangle$ the subgroup of $\Z_p^{\ast}$ generated by $d$.  We say that a subset $S\subseteq \Z_p^{\ast}$ is  \emph{$\langle d\rangle$--periodic} if it is invariant up to $d$--dilation, that is $S=dS$. Note that, a set which is $\langle d\rangle$--periodic  is a union of cosets of $\langle d\rangle$. A set $S$ which is $\langle -1\rangle$--periodic is also called \emph{symmetric}. 

Let $d, t\in \Z_p$, $d\neq 0$, and $S\subseteq \Z_p$, throughout the paper we will work with the transformation $T_{d,t}:\Z_p \to \Z_p$ defined as: 
\begin{center}
$T_{d,t}(S)=dS+t=\{dx+t:x\in S\}$.
\end{center}

Naturally, a set $S\subseteq \Z_p$ is called \emph{invariant} up to $T_{d,t}$, if $T_{d,t}(S)=S$. 
The following observation is not difficult to prove:

\begin{observation}\label{rem:inv_dt} \hspace{.2cm}
\begin{itemize}
\item If $d=1$, $T_{d,t}$ is a $t$--translation.

\item If $d\neq 1$,  the transformation $T_{d,t}$ has a unique fixed point which  is $t(1-d)^{-1}$.

\item For $d\neq 1$, a set $S\subseteq \Z_p$ is invariant up to $T_{d,t}$ if and only if $S+t(d-1)^{-1}$ is a $\langle d\rangle$--periodic set.
\end{itemize}
\end{observation}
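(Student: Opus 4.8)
The plan is to verify each of the three bullet points of Observation~\ref{rem:inv_dt} directly from the definition $T_{d,t}(S)=dS+t$, treating the statement as an elementary computation about affine maps of $\Z_p$ and their invariant sets.

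First, for the case $d=1$, the claim is immediate: substituting $d=1$ into the defining formula gives $T_{1,t}(S)=1\cdot S+t=S+t$, which is exactly the $t$--translation of $S$ by definition. Nothing further is needed.

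Second, for $d\neq 1$, I would look for a fixed point of the \emph{point} map $x\mapsto dx+t$ (the transformation acting on singletons), solving $x=dx+t$, i.e. $x(1-d)=t$. Since $d\neq 1$ and $p$ is prime, $1-d\in\Z_p^{\ast}$ is invertible, so there is a unique solution $x=t(1-d)^{-1}$, establishing both existence and uniqueness of the fixed point. The one mild subtlety is to phrase this as a fixed point of the underlying map on elements rather than on sets, but that is a routine reading of the statement.

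Third, the key assertion is the equivalence that, for $d\neq 1$, the set $S$ is invariant up to $T_{d,t}$ precisely when its shift $S+t(d-1)^{-1}$ is $\langle d\rangle$--periodic, i.e. equal to its own $d$--dilation. Here I would let $c:=t(d-1)^{-1}$ denote the shift (noting the sign flip relative to the fixed point $t(1-d)^{-1}=-c$) and set $S':=S+c$, so $S=S'-c$. The natural approach is to translate coordinates so that the fixed point moves to the origin, under which an affine map conjugates to the linear dilation $x\mapsto dx$; then invariance of $S$ under the affine map becomes invariance of $S'$ under the pure dilation, which is exactly $\langle d\rangle$--periodicity. Concretely, I would compute $T_{d,t}(S)=dS+t=d(S'-c)+t=dS'-dc+t$ and check that $-dc+t=-c$, equivalently $t-c=dc$, i.e. $t=c(d+1)-$\,\dots; rather, using $c=t(d-1)^{-1}$ one verifies $dc-c=t$, so $-dc+t=-dc+(dc-c)=-c$. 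Hence $T_{d,t}(S)=dS'-c$, and $T_{d,t}(S)=S=S'-c$ holds if and only if $dS'=S'$, which is the claimed periodicity of $S'=S+c$.

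The main obstacle is purely bookkeeping: keeping the signs of the shift consistent, since the fixed point is $t(1-d)^{-1}$ while the periodicity statement uses the shift by $t(d-1)^{-1}=-t(1-d)^{-1}$, and the two differ precisely by the sign that makes the dilation center land at the origin. Once the identity $dc-c=t$ (equivalently $t-dc=-c$) is recorded, the equivalence falls out immediately by substituting and cancelling the common translation. I do not expect any genuine difficulty beyond this careful tracking of the affine coordinate change.
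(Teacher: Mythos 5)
Your proof is correct: all three bullets follow exactly as you compute, and the key identity $dc-c=t$ for $c=t(d-1)^{-1}$ makes the conjugation-to-a-dilation argument go through (the transient sign slip ``$t-c=dc$'' is harmless since you immediately correct it). The paper omits the proof entirely, calling the observation ``not difficult to prove,'' and your direct verification---fixed point by solving $x=dx+t$, then translating that fixed point to the origin so that $T_{d,t}$ becomes the pure dilation $x\mapsto dx$---is precisely the intended elementary argument.
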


We will work with the most general linear equation on three variables written as:
\begin{equation}\label{eq:general}
a_1x+a_2y+a_3z=b
\end{equation}
which has $x,y,z\in \Z_p$ variables, and $a_1,a_2,a_3,b \in \Z_p$ constants, such that $a_1a_2a_3 \neq 0$. Naturally, a set $\{s_1,s_2,s_3\}$ of elements in $\Z_p$ is a solution of  Equation~(\ref{eq:general}), if for some choice of $\{i,j,k\}=\{1,2,3\}$, $a_1s_i+a_2s_j+a_3s_k=b$. The next  observation will be important later.

\begin{observation}\label{re:sss}
A solution $\{s_1,s_2,s_3\}$ of Equation~(\ref{eq:general}) with $s_i=s_j:=s$ for some $i\neq j$, is such that $s_1=s_2=s_3$ if and only if $s(a_1+a_2+a_3)=b$.
\end{observation}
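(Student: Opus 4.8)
The plan is to prove the two implications separately, with the reverse direction carrying essentially all of the content.

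For the forward direction, suppose $s_1=s_2=s_3=:s$. Since $\{s_1,s_2,s_3\}$ is a solution of Equation~(\ref{eq:general}), there is a choice of $\{i,j,k\}=\{1,2,3\}$ with $a_1s_i+a_2s_j+a_3s_k=b$; but every coordinate equals $s$, so this reads $a_1s+a_2s+a_3s=b$, that is $s(a_1+a_2+a_3)=b$. This uses nothing beyond the definition of a solution.

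For the reverse direction, assume $s(a_1+a_2+a_3)=b$ and that the solution satisfies $s_i=s_j=s$ for some $i\neq j$. Let $t$ denote the value of the remaining coordinate $s_k$, where $\{i,j,k\}=\{1,2,3\}$; the goal is to show $t=s$. Because $\{s,s,t\}$ is a solution, some ordering of its entries satisfies Equation~(\ref{eq:general}). Up to interchanging the two copies of $s$, there are exactly three such orderings, distinguished by which coefficient multiplies $t$, giving one of $(a_1+a_2)s+a_3t=b$, $(a_1+a_3)s+a_2t=b$, or $(a_2+a_3)s+a_1t=b$. In each case I would substitute the hypothesized value $b=s(a_1+a_2+a_3)$ and cancel the common terms, which leaves $a_\ell(t-s)=0$ for the relevant index $\ell\in\{1,2,3\}$. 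Since $a_1a_2a_3\neq 0$ forces $a_\ell\neq 0$ and $\Z_p$ is a field, this yields $t=s$, whence $s_1=s_2=s_3$.

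The computation is routine and I expect no genuine obstacle; the only point requiring care is the bookkeeping of which coefficient is attached to the odd value $t$, and the one hypothesis that is actually used is $a_1a_2a_3\neq 0$, which is exactly what makes the final cancellation legitimate. In essence the observation reduces to substituting the assumed expression for $b$ into whichever of the three representations witnesses that $\{s,s,t\}$ is a solution.
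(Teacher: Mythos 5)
Your proof is correct: the paper states this observation without proof (it is offered as a routine remark), and your argument — reading off $b=s(a_1+a_2+a_3)$ when all three entries coincide, and conversely substituting that value of $b$ into whichever of the three orderings witnesses the solution to cancel down to $a_\ell(t-s)=0$ with $a_\ell\neq 0$ in the field $\Z_p$ — is exactly the verification the authors intended. No gaps; the only hypotheses used, $a_1a_2a_3\neq 0$ and primality of $p$, are precisely the ones needed.
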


A $3$--\emph{coloring} of $\Z_p$ is a partition $\Z_p=A\cup B\cup C$ with nonempty color classes. A solution of Equation~(\ref{eq:general})  is \emph{rainbow}, if the elements belong pairwise to distinct color classes. A $3$--coloring of $ \Z_p$ is said to be \emph{rainbow--free for Equation~(\ref{eq:general})} if it contains no rainbow solution of Equation~(\ref{eq:general}).

In \cite{jetall} it was proved that every $3$--coloring $\Z_p=A\cup B\cup C$ with $4\leq |A|\leq|B|\leq |C|$ has a rainbow solution of Equation~(\ref{eq:general}) except when $a_1=a_2=a_3$. 

\begin{theorem}[Jungi\'{c} et al. Theorem 6 of \cite{jetall}]\label{thm:m4}
Let $a_1,a_2,a_3,b \in \Z_p$ with $a_1a_2a_3\neq 0$. Then every partition of $Z_p=A\cup B\cup C$ with $|A|,|B|,|C|\geq 4$ contains a rainbow solution of $a_1x+a_2y+a_3z=b$ with the only exception being when $a_1=a_2=a_3$, and every color class is an arithmetic progression with the same common difference $d$, such that $d^{-1}A=\{i\}_{i=t_1}^{t_2-1}$, $d^{-1}B=\{i\}_{i=t_2}^{t_3-1}$, and $d^{-1}C=\{i\}_{i=t_3}^{t_1-1}$, where $(t_1+t_2+t_3)=b+1$, or $b+2$.
\end{theorem}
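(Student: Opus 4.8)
The plan is to translate the existence of a rainbow solution into a statement about sumsets and then to exploit the Cauchy--Davenport inequality together with its inverse (stability) counterpart. Since three elements lying in pairwise distinct color classes are automatically distinct, a rainbow solution in which the element of $A$ plays the role of $x$, that of $B$ the role of $y$, and that of $C$ the role of $z$ exists if and only if $b\in a_1A+a_2B+a_3C$; the remaining orderings correspond to the other ways of matching the classes $\{A,B,C\}$ to the coefficients $\{a_1,a_2,a_3\}$. As $a_1a_2a_3\neq0$, dilation preserves cardinality, so the Cauchy--Davenport inequality gives
\[
|a_1A+a_2B+a_3C|\ \geq\ \min\{p,\ |A|+|B|+|C|-2\}\ =\ p-2,
\]
using $A\cup B\cup C=\Z_p$. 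Hence each ordered sumset omits at most two residues.

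For a rainbow--free coloring, $b$ must be omitted by the sumset of \emph{every} admissible ordering, so each such sumset has cardinality $p-2$ or $p-1$: it is extremal or near--extremal for Cauchy--Davenport. I would then feed this rigidity into the inverse theorems gathered in Section~\ref{sec:ant} (of Vosper and Freiman $3k-4$ type) to force $A$, $B$, and $C$ to be arithmetic progressions with a common difference $d$. Normalizing by the transformation $T_{d^{-1},t}$ of Observation~\ref{rem:inv_dt}, I may assume the three classes are consecutive arcs $\{i\}_{i=t_1}^{t_2-1}$, $\{i\}_{i=t_2}^{t_3-1}$, $\{i\}_{i=t_3}^{t_1-1}$ partitioning $\Z_p$, while the equation keeps the shape $a_1x+a_2y+a_3z=b$.

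The endgame splits according to the coefficients. When $a_1=a_2=a_3$, every ordering produces the same sumset, which for three consecutive arcs is the block of $p-2$ consecutive residues starting at $t_1+t_2+t_3$; its complement is the pair $\{t_1+t_2+t_3-1,\,t_1+t_2+t_3-2\}$. Thus the coloring is rainbow--free exactly when the (normalized) right--hand side lands in this pair, which is precisely the condition $t_1+t_2+t_3\in\{b+1,b+2\}$. When instead some $a_i\neq a_j$, the distinct dilations turn the orderings into genuinely different arcs, and I would show that a single residue $b$ cannot be simultaneously omitted by all of them once $|A|,|B|,|C|\geq4$; hence no rainbow--free coloring exists in this case, matching the ``only exception'' clause.

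I expect the difficulty to be twofold. The first delicate point is the inverse step for a threefold sumset at the Cauchy--Davenport threshold: upgrading mere near--minimality into a common--difference arithmetic structure for all three classes at once requires the right stability result and some care when one class is large, where the usual inverse theorems have genuine exceptions. The second is the modular bookkeeping of the endgame---both the exact location of the two--element gap in the equal--coefficient case (absorbing the common coefficient and the dilation $d$ into the constant) and the verification that the gaps arising from different orderings cannot all contain $b$ in the unequal--coefficient case.
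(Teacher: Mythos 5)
First, a point of comparison: the paper contains no proof of this statement to measure your attempt against. Theorem~\ref{thm:m4} is imported verbatim from \cite{jetall} (Theorem~6 there); the closest internal material is the Cauchy--Davenport/Vosper/Hamidoune--R{\o}dseth machinery of Section~\ref{sec:ant}, which the paper deploys only for color classes of size at most three. More seriously, the statement you set out to prove is \emph{false as written}, and the paper says so immediately after quoting it: $\Z_{13}=\{2,4,6,8\}\cup\{10,12,1,3\}\cup\{5,7,9,11,0\}$ is a partition into three arithmetic progressions of difference $d=2$ which is rainbow--free for $x+y+z=2$ yet fails the condition $t_1+t_2+t_3\in\{b+1,b+2\}$; the corrected condition, given in Theorem~\ref{thm:a1=a2=a3}, is $t_1+t_2+t_3\in\{1+d^{-1}b,\,2+d^{-1}b\}$. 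Your endgame commits exactly the error being corrected: after normalizing by the dilation $d^{-1}$ (and absorbing the common coefficient), the right--hand side of the equation becomes $d^{-1}b$, so the two--element gap $\{t_1+t_2+t_3-2,\,t_1+t_2+t_3-1\}$ of the sumset of the three arcs must contain $d^{-1}b$, not $b$. Declaring this to be ``precisely the condition $t_1+t_2+t_3\in\{b+1,b+2\}$'' silently identifies $b$ with its normalization, which is legitimate only when $d=1$. Consequently no completion of your outline can prove the quoted statement; at best it would yield the corrected version.

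Second, the two steps you defer as ``difficulties'' are not bookkeeping; they are the entire content of the theorem. For the inverse step: from $b\notin a_iX+a_jY+a_kZ$ and Cauchy--Davenport one gets $|a_jY+a_kZ|\in\{|Y|+|Z|-1,\,|Y|+|Z|\}$ for each pair and each ordering; only in the first case does Vosper (Theorem~\ref{thm:vos}) give genuine arithmetic progressions, while in the second Hamidoune--R{\o}dseth (Theorem~\ref{thm:hr}) gives only \emph{almost} arithmetic progressions (its hypotheses $7\le |Y|+|Z|\le p-4$ do hold here, thanks to $|A|,|B|,|C|\ge 4$), and promoting ``almost progressions for every pair and every ordering'' to ``three progressions with a single common difference'' is a genuine case analysis that you do not carry out. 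For the unequal--coefficient case: once the classes are (almost) progressions of difference $d$, the set $a_iX$ is an (almost) progression of difference $a_id$, and comparing the different orderings requires a rigidity statement in the spirit of Lemma~\ref{lem:technical} (if $X$ and $tX$ are both unions of at most two progressions with the same difference, then $t\in\{0,\pm1,\pm2,\pm2^{-1}\}$), which restricts the coefficient ratios to a short list; the surviving equations ($x+y-2z=b$, $x-y+2z=b$, $x-y+2^{-1}z=b$, \dots) must then each be eliminated by a separate argument. This is exactly how the paper proceeds in Sections~\ref{sec:A=2} and~\ref{sec:A=3} for the small classes, and how \cite{jetall} handles the large ones. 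As written, your proposal identifies the right tools but establishes neither of the two claims on which the theorem rests.
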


We shall note that this description has an error. It works well when $d=1$, but do not for other values of $d$. Take for instance $\Z_{13}=A\cup B\cup C$ with $A=\{2,4,6,8\}$, $B=\{10,12,1,3\}$, and $C=\{5,7,9,11,0\}$ (three arithmetic progressions with difference $d=2$), which is a rainbow--free coloring for $x+y+z=2$, and does not satisfy the condition mentioned above. In Theorem~\ref{thm:a1=a2=a3} we corrected the statement.

In \cite{llm1} the case  when some $a_i=a_j$ and $b=0$ in Equation~(\ref{eq:general}) was considered. The authors  provided the description of rainbow--free colorings in this particular case with no restrictions on the size of the color classes.  

\begin{theorem}[Llano, Montejano. Theorem 2 of \cite{llm1}]\label{thm:llm}
A $3$--coloring $\Z_p=A\cup B\cup C$ with $1\leq |A|\leq |B|\leq |C|$ is rainbow--free for $x+y=cz$ if and only if, up to dilation, one of the following holds true:

\begin{itemize}
\item[1.] $A=\{0\}$, with both $B$ and $C$ symmetric $\langle c\rangle$--periodic subsets.

\item[2.] $A=\{1\}$ for

\begin{itemize}
\item[i)] $c=2$, with $(B-1)$ and $(C-1)$  symmetric $\langle 2\rangle$--periodic subsets;

\item[ii)] $c=-1$, with $(B\setminus \{-2\})+2^{-1}$ and $(C\setminus \{-2\})+2^{-1}$ symmetric subsets.
\end{itemize}

\item[3.] $|A|\geq 2$, for $c=-1$, with  $A$, $B$, and $C$  arithmetic progressions with difference $1$, such that $A=\{i\}_{i=t_1}^{t_2-1}$, $B=\{i\}_{i=t_2}^{t_3-1}$, and $C=\{i\}_{i=t_3}^{t_1-1}$, where $(t_1+t_2+t_3)=1$ or $2$.
\end{itemize}
\end{theorem}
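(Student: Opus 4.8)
The plan is to translate rainbow--freeness into a system of sumset conditions and then run an inverse--additive--combinatorics argument, organized by the size of the smallest class $A$. The starting point is a reformulation. Because $A$, $B$, $C$ are pairwise disjoint, any triple $(u,v,w)\in A\times B\times C$ consists of three distinct elements, and $\{u,v,w\}$ is a solution of $x+y=cz$ exactly when one of $u+v=cw$, $u+w=cv$, $v+w=cu$ holds. Hence the coloring is rainbow--free if and only if
\[
(A+B)\cap cC=\emptyset,\qquad (A+C)\cap cB=\emptyset,\qquad (B+C)\cap cA=\emptyset .
\]
Since the equation is preserved under simultaneously dilating the three classes by a common $\lambda\neq 0$, rainbow--freeness is dilation invariant, which justifies the phrase ``up to dilation''. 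Combining each condition with $|A|+|B|+|C|=p$ bounds a pairwise sumset from above, e.g.\ $|A+B|\le p-|C|=|A|+|B|$, while Cauchy--Davenport gives $|A+B|\ge |A|+|B|-1$. Thus each pairwise sumset has cardinality either $|X|+|Y|-1$ or $|X|+|Y|$, placing us in the critical / near--critical regime where inverse theorems apply.

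The ``if'' direction I would dispatch by direct substitution into the three conditions. For instance, in family~1 with $A=\{0\}$ they collapse to $B\cap cC=\emptyset$, $C\cap cB=\emptyset$ and $B\cap(-C)=\emptyset$; since $B$ and $C$ partition $\Z_p^{\ast}$, these force $cC\subseteq C$, $cB\subseteq B$ and $-C\subseteq C$, and equal cardinalities upgrade the inclusions to equalities, so $B$ and $C$ are symmetric and $\langle c\rangle$--periodic. The other families are checked the same way.

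For the ``only if'' direction I would split on $|A|$. When $|A|=1$, dilating lets me assume $A=\{0\}$ or $A=\{1\}$. The case $A=\{0\}$ is immediate, as the computation above runs in reverse and yields family~1. The case $A=\{1\}$ is the delicate one: the conditions read $(1+B)\cap cC=\emptyset$, $(1+C)\cap cB=\emptyset$ and $c\notin B+C$, and analysing them—using $B\cup C=\Z_p\setminus\{1\}$ and tracking how the fixed point of $x\mapsto cx$ interacts with these sets—shows that a rainbow--free coloring can exist only for $c=2$ or $c=-1$. For $c=2$ the equation $x+y=2z$ is translation invariant, so translating $A=\{1\}$ to $\{0\}$ reduces the problem to family~1 and produces the $\langle 2\rangle$--periodic description 2(i); for $c=-1$ no translation symmetry is available and a direct analysis gives the exceptional description 2(ii). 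When $|A|\ge 2$, all three classes have size at least $2$, so Vosper's theorem (for pairs whose sumset meets the Cauchy--Davenport bound exactly) together with a Freiman $3k-4$--type theorem in $\Z_p$ (for pairs one above the bound) forces $A$, $B$, $C$ to be arithmetic progressions with a common difference. Dilating that difference to $1$ turns them into three consecutive arcs of $\Z_p$; substituting this back into the three conditions forces $c=-1$ and the endpoint relation $t_1+t_2+t_3\in\{1,2\}$, giving family~3. This is consistent with the observation that for $c=-1$ the equation becomes $x+y+z=0$, matching the symmetric--case description of Theorem~\ref{thm:m4}.

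I expect the main obstacle to be the $A=\{1\}$ analysis, where one must prove that $c\in\{2,-1\}$ is forced and then extract the precise symmetric/periodic shape (including the awkward removal of $-2$ and the shift by $2^{-1}$ in 2(ii)), together with the clean handling of the near--critical (``$+1$'') sumset cases in the $|A|\ge2$ regime, where Vosper does not apply directly and a $3k-4$--type result is needed. Degenerate small--$p$ cases, where the hypotheses of Cauchy--Davenport and Vosper weaken, would also require separate verification.
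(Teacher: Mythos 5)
You should first note what the paper actually does with this statement: it does not prove it, but imports it verbatim as Theorem 2 of Llano--Montejano \cite{llm1} and uses it as a black box, so there is no internal proof to compare against; your proposal can only be measured against what a proof genuinely requires, using the paper's own treatment of the analogous cases (Sections~\ref{sec:A=1}, \ref{sec:A=2}, \ref{sec:A=3}) as a yardstick. Your framework is sound and is exactly the paper's Section~\ref{sec:ant} machinery: the reformulation of rainbow--freeness as $(A+B)\cap cC=\emptyset$, $(A+C)\cap cB=\emptyset$, $(B+C)\cap cA=\emptyset$, the dilation invariance, and the Cauchy--Davenport sandwich $|X|+|Y|-1\le |X+Y|\le |X|+|Y|$; the ``if'' verification for family 1 is also correct. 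The problem is that the two steps carrying essentially all the difficulty are asserted rather than proven. In the case $A=\{1\}$, the claim that rainbow--freeness forces $c\in\{2,-1\}$ is the heart of the theorem, and your sketch gives no argument: for each fixed $c\notin\{2,-1\}$ one must exclude \emph{every} partition $B\cup C$ of $\Z_p\setminus\{1\}$, which needs something like the fixed--point/invariance argument of the paper's Lemma~\ref{lem:A=1} (whose necessity condition $s(a_1+a_2+a_3)=b$ specializes, for $x+y=cz$ and $s=1$, to exactly $c=2$) or the counting arguments of \cite{llm1}. The extraction of the shape in 2(ii), with the deletion of $-2$ and the shift by $2^{-1}$, is likewise only announced.

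The second gap is in the regime $|A|\ge 2$. You propose Vosper in the critical case and a Freiman $3k-4$--type theorem in the $+1$ case, concluding that $A$, $B$, $C$ are arithmetic progressions with a common difference; but the relevant inverse theorem in $\Z_p$ for $|X+Y|=|X|+|Y|$ (Hamidoune--R{\o}dseth, Theorem~\ref{thm:hr}) concludes only that $X$ and $Y$ are \emph{almost} arithmetic progressions (a progression with one term removed), so the clean three--arc picture does not follow directly, and the endpoint relation $t_1+t_2+t_3\in\{1,2\}$ is never derived. Moreover, the hypotheses of these inverse theorems ($|X|,|Y|\ge 3$, $7\le |X+Y|\le p-4$ for Theorem~\ref{thm:hr}; $|X|,|Y|\ge 2$, $|X+Y|\le p-2$ for Vosper) fail precisely when a color class has size $2$ or $3$. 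This is not a ``degenerate small--$p$'' issue as you suggest: classes of size $2$ or $3$ occur for arbitrarily large $p$, and ruling them out is where the bulk of the work lies --- the paper's own Sections~\ref{sec:A=2} and \ref{sec:A=3}, which handle exactly these sizes for general equations, need Lemma~\ref{lem:technical}, Lemma~\ref{lem:X3Y3}, Lemma~\ref{lem:lambda} and substantial case analysis to do it. So what you have is a correct strategy outline, consistent with how such theorems are proved in this area, but with the decisive cases left open it is not yet a proof.
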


We will use both Theorems~\ref{thm:m4} and \ref{thm:llm} in order to fully characterize the structure of rainbow--free colorings concerning Equation~(\ref{eq:general}).

Next we prove that, besides the case when $a_1+a_2+a_3=0$, the structure of rainbow--free colorings of $ \Z_p$ concerning Equation (\ref{eq:general}) with $b\neq 0$ is the same as the structure of rainbow--free colorings for Equation (\ref{eq:general}) with $b= 0$ up to a suitable translation.

\begin{lemma}\label{lem:b}
Let $a:=a_1+a_2+a_3\neq 0$, and let $T:=T_{1,-ba^{-1}}$. Then, $\Z_p=A\cup B\cup C$ is rainbow--free for Equation (1), if and only if $\Z_p=T(A)\cup T(B)\cup T(C)$ is rainbow--free for equation $a_1x+a_2y+a_3z=0$.
\end{lemma}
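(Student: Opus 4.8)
The plan is to exhibit an explicit bijection between the rainbow solutions of the two equations, induced by the translation $T$. First I would record that, by Observation~\ref{rem:inv_dt}, $T=T_{1,-ba^{-1}}$ is simply the translation $x\mapsto x-ba^{-1}$, hence a bijection of $\Z_p$ that sends the color classes $A$, $B$, $C$ to $T(A)$, $T(B)$, $T(C)$ respectively. In particular, for any element $s$ the color of $s$ under the coloring $(A,B,C)$ equals the color of $T(s)$ under the coloring $(T(A),T(B),T(C))$, and since $T$ is injective it preserves distinctness of elements. Thus a set $\{s_1,s_2,s_3\}$ is rainbow for the first coloring if and only if $\{T(s_1),T(s_2),T(s_3)\}$ is rainbow for the second.

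The key computation is that $T$ carries solutions of $a_1x+a_2y+a_3z=b$ to solutions of $a_1x+a_2y+a_3z=0$. Writing $t=-ba^{-1}$ and using $a=a_1+a_2+a_3$, for any triple one has
\[
a_1(s_i+t)+a_2(s_j+t)+a_3(s_k+t)=(a_1s_i+a_2s_j+a_3s_k)+at=(a_1s_i+a_2s_j+a_3s_k)-b,
\]
since $at=a(-ba^{-1})=-b$. Hence $a_1s_i+a_2s_j+a_3s_k=b$ holds for some ordering $\{i,j,k\}=\{1,2,3\}$ if and only if $a_1(s_i+t)+a_2(s_j+t)+a_3(s_k+t)=0$ holds for the \emph{same} ordering. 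Consequently $\{s_1,s_2,s_3\}$ is a solution of Equation~(\ref{eq:general}) exactly when $\{T(s_1),T(s_2),T(s_3)\}$ is a solution of $a_1x+a_2y+a_3z=0$; and because the inverse translation $T^{-1}=T_{1,ba^{-1}}$ reverses this correspondence, $T$ restricts to a bijection between the two solution sets.

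Finally I would combine the two observations: $\{s_1,s_2,s_3\}$ is a rainbow solution of Equation~(\ref{eq:general}) for $(A,B,C)$ if and only if its image $\{T(s_1),T(s_2),T(s_3)\}$ is simultaneously a solution of $a_1x+a_2y+a_3z=0$ and rainbow for $(T(A),T(B),T(C))$. Therefore the first coloring admits a rainbow solution of the $b$--equation precisely when the second admits a rainbow solution of the $0$--equation, which is exactly the claimed equivalence of rainbow--freeness. The argument is essentially a bookkeeping exercise, and the only point that needs care is that a ``solution'' is an unordered set whose membership is tested over all orderings: I would therefore stress that the translation is applied uniformly to all three coordinates, so the same ordering witnesses both equations and no collisions among $s_1,s_2,s_3$ are created or destroyed. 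This is the mildest possible obstacle; the hypothesis $a\neq 0$ is used only to guarantee that $t=-ba^{-1}$ is well defined.
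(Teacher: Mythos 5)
Your proposal is correct and follows exactly the paper's approach: the paper's one-line proof asserts precisely the solution correspondence $\{s_1,s_2,s_3\}\mapsto\{T(s_1),T(s_2),T(s_3)\}$ that you verify, and your explicit computation using $at=-b$ together with the observation that the translation $T$ preserves color membership is just the spelled-out version of that assertion.
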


\begin{proof}
The set $\{s_1,s_2,s_3\}$ is a solution of Equation (1), if and only if the
set $\{T(s_1),T(s_2),T(s_3)\}$ is a solution of $a_1x+a_2y+a_3z=0$.

\end{proof}

Lemma~\ref{lem:b} indicates that, if $a_1+a_2+a_3\neq 0$ then it is sufficient to study rainbow--free colorings of Equation~(\ref{eq:general}) for $b=0$.


\section{Results}\label{sec:result}

First we consider Equation (1) with $a_1=a_2=a_3$. That is, we describe all rainbow--free colorings of $\Z_p$ concerning equation $x+y+z=b$.  This equation is the only one admitting rainbow--free colorings with large color classes. The next theorem is a direct consequence of Theorem~\ref{thm:llm}, and Lemma~\ref{lem:b}.

\begin{theorem}\label{thm:a1=a2=a3}
For $p>3$, a $3$--coloring $\Z_p=A\cup B\cup C$ with $1\leq |A|\leq |B|\leq |C|$ is rainbow--free with respect to equation $x+y+z=b$, if and only if one of the following holds true:

\begin{itemize}
\item[i)] $A=\{s\}$ with both $(B\setminus \{b-2s\})+(s-b)2^{-1}$ and $(C\setminus
\{b-2s\})+(s-b)2^{-1}$ symmetric sets.

\item[ii)] $|A|\geq 2$ and all $A$, $B$ and $C$ are arithmetic
progressions with the same common difference $d$, so that
$d^{-1}A=\{i\}_{i=t_1}^{t_2-1}$, $d^{-1}B=\{i\}_{i=t_2}^{t_3-1}$, and
$d^{-1}C=\{i\}_{i=t_3}^{t_1-1}$ satisfy $(t_1+t_2+t_3)\in
\{1+d^{-1}b,2+d^{-1}b\}$.
\end{itemize}
\end{theorem}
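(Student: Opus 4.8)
The plan is to derive Theorem~\ref{thm:a1=a2=a3} from Theorem~\ref{thm:llm} by reducing the equation $x+y+z=b$ to an equation of the form $x+y=cz$ through an affine substitution. Observe that for $a_1=a_2=a_3=1$ we have $a:=a_1+a_2+a_3=3\neq 0$ (since $p>3$), so Lemma~\ref{lem:b} applies directly. First I would use Lemma~\ref{lem:b} with $T:=T_{1,-b3^{-1}}$ to reduce the problem to characterizing rainbow--free colorings for the homogeneous equation $x+y+z=0$. This moves the characterization to the $b=0$ case, at the cost of a translation by $s\mapsto s-b3^{-1}$ on each color class, which I will have to carry back through at the end to recover the stated shift $(s-b)2^{-1}$ appearing in part (i).

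Next I would rewrite $x+y+z=0$ as $x+y=-z=(-1)z$, which is exactly the equation $x+y=cz$ of Theorem~\ref{thm:llm} with $c=-1$. Applying that theorem with $c=-1$, the rainbow--free colorings fall into the cases allowed for $c=-1$: case~2(ii) with $A=\{1\}$ (giving the singleton case), and case~3 with $|A|\geq 2$ (giving the arithmetic-progression case). I would then translate each of these back under $T^{-1}$ to obtain the characterization for general $b$. In case~2(ii), the condition that $(B\setminus\{-2\})+2^{-1}$ and $(C\setminus\{-2\})+2^{-1}$ be symmetric, after composing with the translation and tracking the singleton $A=\{s\}$, should yield the stated condition that $(B\setminus\{b-2s\})+(s-b)2^{-1}$ and $(C\setminus\{b-2s\})+(s-b)2^{-1}$ be symmetric; in case~3, the progression structure with the condition $t_1+t_2+t_3\in\{1,2\}$ becomes $t_1+t_2+t_3\in\{1+d^{-1}b,2+d^{-1}b\}$ after the translation and after allowing a common difference $d$ rather than $d=1$.

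The main obstacle, and the reason the statement needed correcting relative to Theorem~\ref{thm:m4}, is the bookkeeping in two places. First, Theorem~\ref{thm:llm} is stated ``up to dilation,'' so I must be careful to reintroduce an arbitrary dilation factor $d$: this is what turns the difference-$1$ progressions of case~3 into progressions with arbitrary common difference $d$ and forces the $d^{-1}$ factors in the final condition on $t_1+t_2+t_3$. Verifying that the dilation interacts correctly with the normalization $A=\{1\}$ (respectively $A=\{0\}$) used in Theorem~\ref{thm:llm}, and that the singleton $s$ can be any element rather than a fixed normalized value, requires tracking how dilation by $d^{-1}$ acts on the singleton and on the symmetry condition. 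Second, the affine change of variables composes the translation from Lemma~\ref{lem:b} with the dilation from Theorem~\ref{thm:llm}, and I must check that the exceptional point (the $-2$ in case~2(ii), which corresponds to the degenerate solution where the singleton is forced to repeat) maps to $b-2s$ and that the shift $2^{-1}$ becomes $(s-b)2^{-1}$. This is where Observation~\ref{re:sss} is useful, since it identifies exactly when a solution with two equal coordinates is fully monochromatic, explaining why a single exceptional element must be excised before imposing symmetry. Once these substitutions are verified, the equivalence follows and no further solution-by-solution analysis is needed.
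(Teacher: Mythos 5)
Your overall strategy is exactly the paper's: reduce to $b=0$ via Lemma~\ref{lem:b} (using $a_1+a_2+a_3=3\neq 0$, which is where $p>3$ enters), identify $x+y+z=0$ with $x+y=cz$ for $c=-1$, invoke Theorem~\ref{thm:llm}, and undo the translation and dilation. Your bookkeeping claims are also correct: the exceptional point $-2$ of case~2(ii) does pull back to $b-2s$, and the shift $2^{-1}$ does become $(s-b)2^{-1}$, since $-b3^{-1}+(s-b3^{-1})2^{-1}=(s-b)2^{-1}$.

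However, there is a genuine gap in your case enumeration: you assert that for $c=-1$ the rainbow--free colorings fall into case~2(ii) and case~3 of Theorem~\ref{thm:llm}, omitting case~1. Case~1 ($A=\{0\}$ with $B$ and $C$ symmetric $\langle c\rangle$--periodic sets) is not restricted to any particular value of $c$; for $c=-1$ it describes the colorings whose singleton, after your translation, is $0$, i.e.\ the colorings of the original equation with $A=\{s\}$ and $3s=b$. These cannot be reached from case~2(ii), because no dilation maps $\{0\}$ to $\{1\}$. As written, your plan therefore proves neither direction completely: in the ``only if'' direction a rainbow--free coloring with $A=\{b3^{-1}\}$ is not covered, and in the ``if'' direction you never verify that the colorings of part~(i) with $3s=b$ are rainbow--free. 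The fix is routine: for $s=b3^{-1}$ one has $b-2s=s\notin B\cup C$ and $(s-b)2^{-1}=-s$, so part~(i) then reads ``$B-s$ and $C-s$ are symmetric,'' which is precisely case~1 of Theorem~\ref{thm:llm} pulled back through the translation; once case~1 is added to your enumeration, the two sub-cases ($3s=b$ and $3s\neq b$) merge into the single statement~(i), and the rest of your argument goes through.
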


\begin{proof}
For $b=0$ we deduce the structure of rainbow--free colorings from Theorem~\ref{thm:llm}. Since $a_1+a_2+a_3\neq 0$, we use Lemma~\ref{lem:b} to complete the structure characterization.
\end{proof}

Consider now Equation~(\ref{eq:general}) with  some $a_i\neq a_j$. In contrast with Theorem~\ref{thm:a1=a2=a3} we find that all rainbow--free colorings are such that there is one color class of cardinality one. We will let this color class  be $A=\{s\}$. Before stating our main result we define for every $i\in \{1,2,\dots, 6\}$ the transformation $T_i:\Z_p\to \Z_p$ as $T_i(x):=T_{d_i,t_i}(x)$ where:

\begin{center}

$d_1=-a_3a_1^{-1}$ \hspace{1cm} $t_1=(b-a_2s)a_1^{-1}$

$d_2=-a_2a_1^{-1}$ \hspace{1cm} $t_2=(b-a_3s)a_1^{-1}$

$d_3=-a_1a_2^{-1}$ \hspace{1cm} $t_3=(b-a_3s)a_2^{-1}$

$d_4=-a_3a_2^{-1}$ \hspace{1cm} $t_4=(b-a_1s)a_2^{-1}$

$d_5=-a_1a_3^{-1}$ \hspace{1cm} $t_5=(b-a_2s)a_3^{-1}$

$d_6=-a_2a_3^{-1}$ \hspace{1cm} $t_6=(b-a_1s)a_3^{-1}$

\end{center}

\begin{theorem}\label{thm:main}
A $3$--coloring $\Z_p=A\cup B\cup C$ with $1\leq |A|\leq |B|\leq |C|$ is rainbow--free for equation:
\begin{equation}\label{eq:general2}
a_1x+a_2y+a_3z=b, \hspace{.2cm}with \hspace{.2cm}some \hspace{.2cm} a_i\neq a_j,
\end{equation}
if and only if $A=\{s\}$ with $s(a_1+a_2+a_3)=b$, and both $B$ and $C$ are sets invariant up to $T_i$ for every $i\in\{1,2,\dots ,6\}$.
\end{theorem}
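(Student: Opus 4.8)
The plan is to translate rainbow solutions into the action of the six maps $T_i$ and then run a case analysis on the size of the smallest class.

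First, the reformulation, carried out in the main case where the smallest class is a singleton $A=\{s\}$. A rainbow solution uses all three colors, so it must consist of $s$ together with one element $u\in B$ and one element $v\in C$, and these three are automatically distinct. The set $\{s,u,v\}$ solves Equation~(\ref{eq:general2}) precisely when one of the six placements of $s,u,v$ into the three coefficient positions satisfies the equation, and each such placement gives, upon solving for the $C$-coordinate, one of the relations $v=T_i(u)$; for instance $a_1s+a_2u+a_3v=b$ yields $v=-a_2a_3^{-1}u+(b-a_1s)a_3^{-1}=T_6(u)$. Hence, when $|A|=1$,
\[
\Z_p=A\cup B\cup C \text{ is rainbow--free} \iff T_i(B)\cap C=\emptyset \text{ for all } i .
\]
Since the family $\{T_i\}$ is closed under inverses (one checks $T_1^{-1}=T_5$, $T_2^{-1}=T_3$, $T_4^{-1}=T_6$), this condition is symmetric in $B$ and $C$, i.e.\ it is equivalent to $T_i(C)\cap B=\emptyset$ for all $i$.

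The direction ``$\Leftarrow$'' is then immediate: if $B$ is invariant under every $T_i$ and $\{s,u,v\}$ with $u\in B,\,v\in C$ were a rainbow solution, then $v=T_i(u)\in T_i(B)=B$, contradicting $v\in C$. For ``$\Rightarrow$'' I would first dispose of large $A$: since some $a_i\neq a_j$, Theorem~\ref{thm:m4} shows that $|A|\geq 4$ forces a rainbow solution, so a rainbow--free coloring has $|A|\leq 3$. The heart of the argument is then the case $|A|=1$ (Section~\ref{sec:A=1}). Fix any $i$ with $d_i\neq 1$ (the $d_i$ are not all $1$); by Observation~\ref{rem:inv_dt}, $T_i$ is an affine bijection with a unique fixed point $\phi_i$, and $\langle T_i\rangle$ partitions $\Z_p\setminus\{\phi_i\}$ into equal-size orbits. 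On an orbit $O$ not containing $s$, the two conditions $T_i(B)\cap C=\emptyset$ and $T_i(C)\cap B=\emptyset$ forbid a $B$-element from being followed by a $C$-element along the cycle, and vice versa, forcing $O\subseteq B$ or $O\subseteq C$; on the orbit through $s$, the same reading forces its non-$s$ part to be monochromatic. Consequently both $B$ and $C$ are invariant under $T_i$ if and only if the orbit of $s$ is trivial, i.e.\ $s=\phi_i$. Applying Observation~\ref{re:sss} to the degenerate solution $\{s,s,T_i(s)\}$ shows that $s=\phi_i$ is equivalent to $s(a_1+a_2+a_3)=b$; hence, once this single identity is established for one such $i$, invariance of $B$ and $C$ under \emph{all} $T_i$ follows at once.

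Everything therefore reduces to forcing $s(a_1+a_2+a_3)=b$ and to eliminating $|A|\in\{2,3\}$, and these are the two main obstacles. For the first, if $s(a_1+a_2+a_3)\neq b$ then $\sigma:=T_6(s)\neq s$ lies in $B$ or $C$, say $B$; the orbit analysis propagates the color $B$ along every orbit meeting $\sigma$, and the plan is to show that the relations generated by all six $T_i$ connect the whole of $\Z_p\setminus\{s\}$, which would force $C=\emptyset$, a contradiction. I expect the genuine difficulty to be the second obstacle: ruling out $|A|=2$ and $|A|=3$ (Sections~\ref{sec:A=2}--\ref{sec:A=3}). There the orbit bookkeeping no longer pins the structure down, and one must instead quantify the near--invariance of $B$ and $C$ and feed it into the inverse additive--combinatorial results of Section~\ref{sec:ant} to contradict the size constraints $|A|\le|B|\le|C|$; carrying this through uniformly over the various degenerate coefficient patterns (such as $a_i+a_j=0$) is, I anticipate, the most delicate part of the proof.
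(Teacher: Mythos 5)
Your reformulation for $A=\{s\}$ (rainbow--freeness $\iff T_i(B)\cap C=\emptyset$ for all $i$), the closure of $\{T_1,\dots,T_6\}$ under inverses, the ``if'' direction, the reduction to $|A|\le 3$ via Theorem~\ref{thm:m4}, and the orbit analysis showing that invariance of $B$ and $C$ under a fixed $T_i$ with $d_i\neq 1$ is equivalent to $s=\phi_i$ are all correct, and this matches the skeleton of the paper's argument. But the step that actually forces $s(a_1+a_2+a_3)=b$ is left as a plan: you need that the graph on $\Z_p\setminus\{s\}$ with edges $\{x,T_i(x)\}$ is connected whenever the $s$--condition fails, and you never prove this. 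It is not a routine claim, because color propagation breaks exactly at $s$ (if $T_i(x)=s$, nothing is transmitted), so transitivity of the group $\langle T_1,\dots,T_6\rangle$ on $\Z_p$ does not by itself give connectivity of the punctured graph. The paper closes this step differently (Lemma~\ref{lem:basic} and Lemma~\ref{lem:A=1}): the six points $T_i(s)$ coincide in pairs, e.g.\ $T_1(s)=T_2(s)$, so whichever of $B,C$ avoids that single point satisfies $B\subseteq T_1(B)$, hence by cardinality is \emph{genuinely} invariant under both $T_1$ and $T_2$; combining the two set equations $B=d_1B+t_1$ and $B=d_2B+t_2$ then shows $B$ is invariant under a translation that is nonzero unless $d_1t_2=t_2-t_1+d_2t_1$, i.e.\ unless $s(a_1+a_2+a_3)=b$, and a proper nonempty subset of $\Z_p$ admits no nonzero translation invariance. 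If you want to keep your route, this collision $T_1(s)=T_2(s)$ plus the commutator-of-affine-maps observation is also the cleanest way to substantiate your connectivity claim; as written, that claim is a gap.

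The larger gap is that the cases $|A|=2$ and $|A|=3$ are not proved at all --- you explicitly defer them, yet they are the bulk of the paper's proof (Lemmas~\ref{lem:A2B3}, \ref{lem:A2B2}, \ref{lem:A3B4} and \ref{lem:A3B3}). There one uses the sumset bounds (\ref{eq:basic}) together with Vosper's Theorem~\ref{thm:vos} and the Hamidoune--R{\o}dseth Theorem~\ref{thm:hr} to force the dilates $a_iB$, $a_iC$ to be (almost) arithmetic progressions with a common difference, then Lemma~\ref{lem:technical} to pin the coefficient ratios down to $\{\pm 1,\pm 2,\pm 2^{-1}\}$, and finally separate arguments for each surviving equation, including the delicate $|A|=|B|=3$ analysis through Lemma~\ref{lem:X3Y3} and Lemma~\ref{lem:lambda}. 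Naming the additive tools and anticipating that they will ``contradict the size constraints'' is an outline, not a proof; since Theorem~\ref{thm:main} asserts that \emph{every} rainbow--free coloring has $|A|=1$, these eliminations are indispensable, and without them (and without the connectivity claim above) the proposal establishes only the easy direction and a fragment of the hard one.
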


\begin{proof}
The proof is deduced from Theorem~\ref{thm:m4}, and the lemmas in Sections~\ref{sec:A=2} and \ref{sec:A=3}.
Let $\Z_p=A\cup B\cup C$ with $1\leq |A|\leq |B|\leq |C|$ be a rainbow--free coloring of Equation (\ref{eq:general2}). Then Theorem~\ref{thm:m4} implies that $|A|\in\{1,2,3\}$.  From Lemmas~\ref{lem:A2B3} and~\ref{lem:A2B2}  (concerning the case $|A|=2$), and Lemmas~\ref{lem:A3B4} and~\ref{lem:A3B3} (concerning the case $|A|=3$) we deduce that actually $|A|=1$.  The rest  of the proof follows by Lemma~\ref{lem:A=1} (concerning the case $|A|=1$). 
\end{proof}

Given a prime number $p$, an equation will be called  \emph{rainbow} with respect to $p$, if every $3$--coloring of $\Z_p$  contains a rainbow solution of it. Consequently, a  \emph{non--rainbow} equation with respect to a  prime number $p$ is an equation such that there are rainbow--free colorings of $\Z_p$ with respect to the same. For instance,  $x+y+z=b$ is a non--rainbow equation with respect to all primes, and $x+y=2z$ is a rainbow equation with respect to $p$, if and only if $p$ satisfies either $|\langle 2\rangle|=p-1$, or $|\langle 2\rangle|=(p-1)/2$ where $(p-1)/2$ is an odd number (see Theorem $5$ of \cite{jetall}, or Corollary 1 of \cite{llm1}).

We can deduce  from Theorem~\ref{thm:main}  which equations are rainbow (hence,  which ones are non--rainbow). We generalize the above result about $3$--term arithmetic progressions in the next corollary. Before continuing we  highlight an important  consequence of Observation~\ref{rem:inv_dt}.

\begin{observation}\label{rem:final}
If $s(a_1+a_2+a_3)=b$ then,  for every $i\in\{1, ... ,6\}$, the fixed point of  $T_i$ is precisely $A=\{s\}$. Moreover, a set which is invariant up to $T_i$ for every $i\in\{1,2,\dots ,6\}$ is a suitable translation of a $\langle d_1, d_2,...,d_6\rangle$--periodic set.
\end{observation}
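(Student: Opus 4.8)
The plan is to derive both assertions directly from Observation~\ref{rem:inv_dt} by a single uniform computation carried out over the six transformations. First I would record that each $T_i=T_{d_i,t_i}$ fits the pattern $d_i=-a_k a_l^{-1}$ and $t_i=(b-a_m s)a_l^{-1}$ for a suitable labelling $\{k,l,m\}=\{1,2,3\}$ read off from the defining table (for instance $T_1$ has $k=3$, $l=1$, $m=2$). This reduces the whole statement to manipulations with one generic triple $(a_k,a_l,a_m)$, to be repeated for each $i$.

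For the first assertion, assume momentarily that $d_i\neq 1$. By the second bullet of Observation~\ref{rem:inv_dt} the unique fixed point of $T_i$ is $t_i(1-d_i)^{-1}$, and since $1-d_i=(a_l+a_k)a_l^{-1}$ this simplifies to
\[
t_i(1-d_i)^{-1}=(b-a_m s)(a_k+a_l)^{-1}.
\]
This equals $s$ precisely when $b-a_m s=s(a_k+a_l)$, that is, when $s(a_1+a_2+a_3)=b$. It remains to handle the degenerate possibility $d_i=1$, which occurs exactly when $a_k+a_l=0$; in that case the hypothesis gives $b=s(a_k+a_l+a_m)=a_m s$, so $t_i=(b-a_m s)a_l^{-1}=0$ and $T_i$ is the identity. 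Thus in every case $s$ is a fixed point of $T_i$ (and the only one whenever $T_i\neq\mathrm{id}$), while for $d_i=1$ the invariance requirement becomes vacuous. This proves the first assertion.

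For the ``moreover'' part I would invoke the third bullet of Observation~\ref{rem:inv_dt}. The translation amount appearing there is $t_i(d_i-1)^{-1}=-t_i(1-d_i)^{-1}=-s$, the negative of the common fixed point just computed; crucially this same shift $-s$ works for all six indices simultaneously. Hence a set $S$ is invariant up to $T_i$ (for $d_i\neq1$) if and only if $S-s$ is $\langle d_i\rangle$--periodic, i.e. invariant under dilation by $d_i$. Requiring this for every $i$ is equivalent to asking that $S-s$ be invariant under each of $d_1,\dots,d_6$, hence under the subgroup $\langle d_1,d_2,\dots,d_6\rangle$ they generate (indices with $d_i=1$ contribute only the trivial subgroup and may be discarded). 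Therefore $S$ is invariant up to all $T_i$ exactly when $S-s$ is $\langle d_1,\dots,d_6\rangle$--periodic, i.e. $S$ is the translation by $s$ of such a periodic set.

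The computations are routine; the points needing care are the bookkeeping of the six coefficient patterns and the separate treatment of the identity case $d_i=1$. The main conceptual step—and the only one that is not mere calculation—is the observation that, because all six transformations share the single fixed point $s$, one and the same translation by $-s$ simultaneously conjugates each $T_i$ to the dilation $x\mapsto d_i x$. This is exactly what allows the six individual periodicity conditions to be merged into a single $\langle d_1,\dots,d_6\rangle$--periodicity.
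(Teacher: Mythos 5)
Your proof is correct and follows exactly the route the paper intends: the paper states this observation without proof as an immediate consequence of Observation~\ref{rem:inv_dt}, and your computation (the generic pattern $d_i=-a_ka_l^{-1}$, $t_i=(b-a_ms)a_l^{-1}$, the fixed point $(b-a_ms)(a_k+a_l)^{-1}=s$, and the common shift $-s$ in the third bullet) is precisely that consequence spelled out. Your separate handling of the degenerate case $d_i=1$, where $T_i$ becomes the identity under the hypothesis, is a point the paper glosses over and is a welcome addition.
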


Theorem~\ref{thm:main} can  also be stated in the opposite manner, such as:  A coloring $Z_p=A\cup B\cup C$ has a rainbow solution of Equation (\ref{eq:general2}), if and only if some of the following holds true:

\begin{itemize}
\item[i)] $2\leq \min \{|A|,|B|,|C|\}$.

\item[ii)] $a_i+a_2+a_3=0\neq b$.

\item[iii)] $T_i(X)\neq X$ for some $i\in\{1,...,6\}$, and $X\in\{B,C\}$.
\end{itemize}

\begin{corollary}\label{thm:main2}
Every $3$--coloring of $\Z_p$ with nonempty color classes contains a rainbow solution of Equation~(\ref{eq:general2}), if and only if one of the following holds true:

\begin{itemize}
\item[i)] $a_1+a_2+a_3=0\neq b$  
\item[ii)] $|\langle d_1, d_2,...,d_6\rangle|=p-1$.
\end{itemize}
\end{corollary}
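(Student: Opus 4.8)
The plan is to read Corollary~\ref{thm:main2} as the exact negation of the existence of a rainbow-free coloring, and to extract that negation from Theorem~\ref{thm:main} and Observation~\ref{rem:final}. Since an equation is rainbow with respect to $p$ precisely when \emph{no} $3$-coloring of $\Z_p$ is rainbow-free for it, I would first restate Theorem~\ref{thm:main} as a clean existence criterion: a rainbow-free coloring of Equation~(\ref{eq:general2}) exists if and only if there is an element $s\in\Z_p$ with $s(a_1+a_2+a_3)=b$ \emph{and} a partition of $\Z_p\setminus\{s\}$ into two nonempty sets $B,C$, each invariant under every $T_i$. The whole argument then reduces to deciding separately when the point $s$ and when the bipartition can be produced.

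For the point $s$, I would write $a:=a_1+a_2+a_3$ and analyze $sa=b$. If $a\neq 0$ there is the unique solution $s=ba^{-1}$; if $a=0=b$ every $s$ works; and if $a=0\neq b$ there is no solution. Hence a suitable $s$ fails to exist exactly in case (i), and there Theorem~\ref{thm:main} already forbids any rainbow-free coloring, so the equation is rainbow. For the bipartition, assuming $s$ exists, I would set $H:=\langle d_1,d_2,\dots,d_6\rangle$, a subgroup of $\Z_p^{\ast}$. By Observation~\ref{rem:final} every $T_i$ fixes $s$, so a set is invariant under all $T_i$ if and only if its translate by $-s$ is $H$-periodic, i.e. a union of cosets of $H$ inside $\Z_p^{\ast}=\Z_p\setminus\{0\}$. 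Thus the required $B,C$ exist if and only if $\Z_p^{\ast}$ splits into two nonempty unions of cosets of $H$, which is possible exactly when $H\neq\Z_p^{\ast}$, equivalently $|H|\neq p-1$ (using that $H\neq\Z_p^{\ast}$ forces index at least $2$). When $H=\Z_p^{\ast}$ there is only one coset and no split exists, so the equation is rainbow; otherwise I would exhibit the coloring directly, taking $B-s$ to be a single coset of $H$ and $C-s$ its complement in $\Z_p^{\ast}$, which by Theorem~\ref{thm:main} is rainbow-free.

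Combining the two steps, a rainbow-free coloring exists if and only if $s$ exists and $|H|\neq p-1$, that is, precisely when neither (i) nor (ii) holds. Negating, the equation is rainbow if and only if (i) or (ii) holds, which is the claim; the two implications of the corollary then correspond exactly to the ``no suitable $s$'' branch and the ``no admissible split'' branch above.

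The step I expect to require the most care is the bookkeeping around those transformations $T_i$ with $d_i=1$, since Observation~\ref{rem:inv_dt} treats $d=1$ (a translation, with no fixed point in general) and $d\neq 1$ (a unique fixed point) by different clauses. The hard part will be checking that such degenerate $T_i$ do not spoil the correspondence ``invariant under all $T_i$ $\Leftrightarrow$ $H$-periodic after translating by $-s$''. I would resolve this by observing that the hypothesis $sa=b$ forces the corresponding $t_i=0$ whenever $d_i=1$, so that $T_i$ is the identity and imposes no constraint, consistently with $\langle d_i\rangle$ being trivial and contributing nothing to $H$; with this remark in place the passage through Observation~\ref{rem:final} is airtight.
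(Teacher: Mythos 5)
Your proposal is correct and takes essentially the same route as the paper: both directions are read off from Theorem~\ref{thm:main} together with Observation~\ref{rem:final}, by splitting the existence of a rainbow-free coloring into the existence of the fixed point $s$ (which fails exactly in case (i)) and the existence of a two-part $H$-periodic partition of $\Z_p\setminus\{s\}$ (which fails exactly in case (ii)). If anything, your write-up is more explicit than the paper's, which asserts the converse direction directly from Theorem~\ref{thm:main} without spelling out the coset construction or the degenerate case $d_i=1$.
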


\begin{proof}
If $a_1+a_2+a_3=0\neq b$  then the second point in the previous paragraph  implies that every $3$--coloring of $\Z_p$ contains a rainbow solution of Equation (\ref{eq:general2}). If $|\langle d_1, d_2,...,d_6\rangle|=p-1$ then by Observation~\ref{rem:final} it is impossible to simultaneously satisfy $T_i(B)=B$, and $T_i(C)=C$ for every $i\in\{1,...,6\}$. Thus, by the third point in the previous paragraph we conclude the desired implication.

On the other hand, suppose that $a_1+a_2+a_3=0= b$, or $a_1+a_2+a_3\neq0$, and  $|\langle d_1, d_2,...,d_6\rangle|<p-1$. Then, according to Theorem~\ref{thm:main} there exist rainbow-free colorings  of $\Z_p$ with nonempty color classes.
\end{proof}

Corollary \ref{thm:main2} gives a criteria to determine which equations are rainbow. To finish this section we describe with two particular examples how to construct rainbow--free colorings of $\Z_p$ for equations provided with the following  conditions: $a_1+a_2+a_3=0= b$ or $a_1+a_2+a_3\neq0$, and  $|\langle d_1, d_2,...,d_6\rangle|<p-1$.

\begin{example}
Consider $\Z_{13}$ and the equation $x-4y+3z=0$. Since $1-4+3=0=b$, in order to construct a rainbow--free coloring,  we can let $A$ be any point of $\Z_{13}$. Let $A=\{0\}$. Since $\langle d_1, d_2,...,d_6\rangle =\{\pm1,3,4\}$ (so, $|\langle d_1, d_2,...,d_6\rangle|\neq 12$),  we can partition $\Z_{13}\setminus \{0\}=B\cup C$ in such a way that both $B$ and $C$ are  $\langle d_1, d_2,...,d_6\rangle$--periodic sets. We  let $B=\{1,3,4,9,10,12\}$ and $C=\{2,5,6,7,8,11\}$. In this case, also any translation of such coloring will be rainbow--free.

\end{example}

\begin{example}
Consider $\Z_{17}$ and the  equation $x+8y-2z=3$. Since $1+8-2= 7\neq 0$ then, in order to construct a rainbow--free coloring, we let $A=\{(3) (7^{-1})\}=\{15\}$. Since $\langle d_1, d_2,...,d_6\rangle=\{\pm1,2\}$ (so, $|\langle d_1, d_2,...,d_6\rangle|\neq 16$) we can partition $\Z_{17}\setminus \{15\}=B\cup C$ in such a way that both $B$ and $C$ are translations of  $\langle d_1, d_2,...,d_6\rangle$--periodic sets. We  let $B=\{16,0,2,6,7,11,13,14\}$ and $C=\{1,3,4,5,8,9,10,12\}$.
\end{example}


\section{The case $|A|=1$}\label{sec:A=1}

In this section we describe all rainbow--free colorings of $\Z_p$ concerning Equation~(\ref{eq:general2}) with a color class of cardinality one. Throughout the section, $\Z_p=A\cup B\cup C$ will be a $3$--coloring of $\Z_p$ with $A=\{s\}$.

\begin{lemma}\label{lem:basic}
Let  $\Z_p=\{s\}\cup B\cup C$ be a rainbow--free coloring for Equation~(\ref{eq:general2}). Then

$$B\subseteq T_i(B)\cup \{d_is+t_i\}$$
$$C\subseteq T_i(C)\cup \{d_is+t_i\}$$
for every $i\in\{1,2,\dots ,6\}$.

\end{lemma}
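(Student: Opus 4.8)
The plan is to exploit the fact that, since $A=\{s\}$ is a singleton, any rainbow solution of Equation~(\ref{eq:general2}) must consist of the point $s$ together with one point $\beta\in B$ and one point $\gamma\in C$; these three elements are automatically distinct because $A$, $B$, $C$ are disjoint. Thus the coloring is rainbow--free precisely when, for every $\beta\in B$ and every $\gamma\in C$, none of the six assignments of the triple $\{s,\beta,\gamma\}$ to the ordered triple of variables $(x,y,z)$ satisfies $a_1x+a_2y+a_3z=b$.

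First I would record the dictionary between these six assignments and the six transformations $T_i=T_{d_i,t_i}$. Placing $s$ in one of the three variable slots and solving the resulting linear relation (nondegenerate, since $a_1a_2a_3\neq0$) for the $B$--coordinate in terms of the $C$--coordinate yields exactly the six identities $\beta=T_i(\gamma)$, $i\in\{1,\dots,6\}$; this is a direct computation from the displayed values of $d_i$ and $t_i$. Consequently $\{s,\beta,\gamma\}$ is a solution if and only if $\beta=T_i(\gamma)$ for some $i$, so rainbow--freeness forces $\beta\neq T_i(\gamma)$ for all $\beta\in B$, $\gamma\in C$ and all $i$; equivalently, $T_i(C)\cap B=\emptyset$ for every $i$.

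From here the inclusion for $B$ is immediate. Each $T_i$ is a bijection of $\Z_p$ because $d_i\neq0$, and the complement of $C$ is $A\cup B=\{s\}\cup B$, so $T_i(C)\cap B=\emptyset$ gives
\[
B\;\subseteq\;\Z_p\setminus T_i(C)\;=\;T_i(\Z_p\setminus C)\;=\;T_i(\{s\})\cup T_i(B)\;=\;\{d_is+t_i\}\cup T_i(B),
\]
which is the first claimed inclusion (recall $d_is+t_i=T_i(s)$).

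For the inclusion for $C$ I would invoke the symmetry of the hypothesis in $B$ and $C$: solving the same six linear relations for the $C$--coordinate in terms of the $B$--coordinate expresses each assignment instead as $\gamma=T_j(\beta)$, and the resulting six maps are precisely the inverses $T_i^{-1}$, which form the same family $\{T_1,\dots,T_6\}$ (the six dilations $d_i$ are closed under inversion, pairing $T_1\!\leftrightarrow\!T_5$, $T_2\!\leftrightarrow\!T_3$, $T_4\!\leftrightarrow\!T_6$). Hence rainbow--freeness equally yields $T_i(B)\cap C=\emptyset$ for all $i$, and the identical complementation argument gives $C\subseteq\{d_is+t_i\}\cup T_i(C)$. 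The only real work is the bookkeeping of the second step; there is no analytic obstacle, the entire content being the verification that the six affine maps $T_i$ enumerate the six variable--assignments and that this family is closed under inversion.
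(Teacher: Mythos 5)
Your proof is correct and follows essentially the same route as the paper: both translate rainbow--freeness into the disjointness $B\cap T_i(C)=\emptyset$ via the dictionary between the six maps $T_i$ and the six variable assignments, and then complement within the transformed partition $\Z_p=\{T_i(s)\}\cup T_i(B)\cup T_i(C)$. The only cosmetic difference is your handling of the $C$--inclusion through closure of $\{T_1,\dots,T_6\}$ under inversion (which is correct, with exactly the pairing $T_1\leftrightarrow T_5$, $T_2\leftrightarrow T_3$, $T_4\leftrightarrow T_6$ you state), whereas the paper gets it symmetrically for free: $C\cap T_i(B)\neq\emptyset$ already yields $u=T_i(v)$ with $u,v$ in different classes, and this relation encodes a solution containing $s$ regardless of which of $u,v$ lies in $B$ or $C$.
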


\begin{proof}
Consider the partition $\Z_p=T_i(s)\cup T_i(B)\cup T_i(C)$, and suppose $B\nsubseteq T_i(B)\cup T_i(s)$ then $B\cap T_i(C)\neq \emptyset$. Thus, there exist $u\in B$ and $v\in C$ such that $u=d_iv+t_i$. It is not hard to see then, that $\{u,v,s\}$ will be a rainbow solution of Equation~(\ref{eq:general2}).
\end{proof}

The aim of this section is to prove that, in fact, a rainbow--free coloring with a color class of cardinality one, is such that the remained color classes are invariant up to $T_i$. It is not difficult to see that $\Z_p=\{s\}\cup B\cup C$ with $T_i(B)=B$ and $T_i(C)=C$ is s rainbow--free coloring. We will prove this and the converse. The converse provided with a restriction on $s$ in terms of $a_1,a_2,a_3$ and $b$.

\begin{lemma}\label{lem:A=1}
A $3$--coloring $\Z_p=\{s\}\cup B\cup C$ is rainbow--free for Equation~(\ref{eq:general2}) if and only if $s$ is such that
\begin{equation}\label{eq:s}
s(a_1+a_2+a_3)=b
\end{equation}

and, $$T_i(B)=B$$
\begin{equation}\label{eq:Ti}
T_i(C)=C
\end{equation}
for every $i\in\{1,2,\dots ,6\}$.
\end{lemma}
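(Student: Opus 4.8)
The plan is to prove both directions of the equivalence in Lemma~\ref{lem:A=1}. The forward direction (rainbow--free $\Rightarrow$ the two conditions) is where the work lies, and it naturally decomposes into first establishing the constraint \eqref{eq:s} on $s$, and then upgrading the containments of Lemma~\ref{lem:basic} to equalities \eqref{eq:Ti}. The easy converse direction I would dispatch quickly: assuming $s(a_1+a_2+a_3)=b$ and $T_i(B)=B$, $T_i(C)=C$ for all $i$, I would argue that any solution $\{s_1,s_2,s_3\}$ cannot be rainbow. Since a rainbow solution must use $A=\{s\}$ exactly once, one of the three positions is occupied by $s$; solving for the remaining two variables expresses one in terms of the other via precisely one of the maps $T_i$, so the two non--$s$ elements are related by $u=T_i(v)$. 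If both were in distinct classes $B$ and $C$, the invariance $T_i(B)=B$, $T_i(C)=C$ would force $u=T_i(v)\in T_i(C)=C$ while $u\in B$, a contradiction — unless $v$ is the fixed point, which by Observation~\ref{rem:final} is $s$ itself, handled by \eqref{eq:s} and Observation~\ref{re:sss}.

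For the forward direction, I would first pin down \eqref{eq:s}. Suppose $\Z_p=\{s\}\cup B\cup C$ is rainbow--free. The danger is a genuinely rainbow (three--distinct--values) solution; to rule that out I want every solution that \emph{could} be rainbow to collapse. The key is Observation~\ref{re:sss}: a solution with a repeated coordinate equal to $s$ is totally degenerate exactly when $s(a_1+a_2+a_3)=b$. I would show that if $s(a_1+a_2+a_3)\neq b$, then one can exhibit a rainbow solution using $s$ once together with one element of $B$ and one of $C$, since the containments from Lemma~\ref{lem:basic} leave enough room — essentially, the failure of \eqref{eq:s} means the fixed point of the relevant $T_i$ is not $s$, so the ``exceptional'' element $d_is+t_i$ does not line up degenerately, and the near--invariance cannot be perfect. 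This forces \eqref{eq:s}.

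With \eqref{eq:s} in hand, Observation~\ref{rem:final} tells me the fixed point of each $T_i$ is exactly $s$, i.e. $d_is+t_i=s$. Then the exceptional element in Lemma~\ref{lem:basic} is $s$ itself, which lies in neither $B$ nor $C$, so the containments sharpen to $B\subseteq T_i(B)$ and $C\subseteq T_i(C)$ for every $i$. Since $T_i$ is a bijection of $\Z_p$ fixing $s$, it permutes $\Z_p\setminus\{s\}=B\cup C$; applying $B\subseteq T_i(B)$ and the analogous containment to $T_i^{-1}$ (which is again one of these affine maps of the same form, so Lemma~\ref{lem:basic} applies to it too), I get the reverse inclusion and hence equality \eqref{eq:Ti}. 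The main obstacle I anticipate is the first step — producing an explicit rainbow solution when \eqref{eq:s} fails — because it requires checking that the three resulting values are genuinely pairwise distinct and that the construction respects the color constraints forced by $A=\{s\}$; this is a case analysis over which position $s$ occupies and over the six maps $T_i$, and care is needed precisely at the boundary where one of the elements could coincide with $s$ or where two of the $T_i$ degenerate (e.g. when some $a_i=a_j$, making two of the $d_i$ equal). Everything else reduces to the bijectivity and fixed--point bookkeeping supplied by Observation~\ref{rem:inv_dt} and Observation~\ref{rem:final}.
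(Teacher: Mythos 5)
Your ``if'' direction and your derivation of the invariance (\ref{eq:Ti}) from (\ref{eq:s}) are correct and essentially match the paper: once $s(a_1+a_2+a_3)=b$, the exceptional element $d_is+t_i$ of Lemma~\ref{lem:basic} is $s$ itself, which lies outside $B\cup C$, so $B\subseteq T_i(B)$ and $C\subseteq T_i(C)$, and equality follows by cardinality because each $T_i$ is a bijection (your detour through $T_i^{-1}$, while valid since $T_1^{-1}=T_5$, $T_2^{-1}=T_3$, $T_4^{-1}=T_6$, is unnecessary).

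The genuine gap is precisely the step you flag as ``the main obstacle'': showing that rainbow--freeness forces (\ref{eq:s}). Your proposed mechanism --- when (\ref{eq:s}) fails, ``the near--invariance cannot be perfect,'' so one can exhibit a rainbow solution --- is not only left unexecuted, its guiding intuition is backwards. Assume the coloring is rainbow--free and (\ref{eq:s}) fails, so that $e:=d_1s+t_1\neq s$ lies in $B$ or $C$, say $e\in C$. Then Lemma~\ref{lem:basic} gives $B\subseteq T_1(B)$, hence $T_1(B)=B$: the near--invariance of $B$ \emph{is} perfect, so no contradiction can be extracted from a single map, and there is no pair $u\in B$, $v\in C$ with $u=T_1(v)$ to serve as an explicit rainbow solution through that map. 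The paper's actual argument hinges on a computation absent from your sketch: $d_1s+t_1=d_2s+t_2$, so the two maps $T_1$ and $T_2$ share the same exceptional element, which lands in the same class $C$, and therefore $B$ is simultaneously invariant under \emph{two} distinct affine maps, $B=d_1B+t_1$ and $B=d_2B+t_2$. Manipulating these two equalities (dilating one, translating the other) shows that a nonempty proper subset of $\Z_p$ can satisfy both only if $d_1t_2=t_2-t_1+d_2t_1$ --- otherwise $B$ would be invariant under a nontrivial translation, forcing $B\in\{\emptyset,\Z_p\}$ --- and this identity is algebraically equivalent to (\ref{eq:s}) once one assumes, without loss of generality, $a_2\neq a_3$; that contradicts the assumption that (\ref{eq:s}) fails. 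So the heart of the lemma is this two--map rigidity argument, not a case analysis over which position $s$ occupies producing pairwise--distinct values, and your plan as described gives no way to reach the needed contradiction.
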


\begin{proof}
Assume without lost of generality that $a_2\neq a_3$. The ``if'' part of the statement follows since, for both $X\in\{B,C\}$, every solution $\{s_1,s_2,s_3\}$ of Equation~(\ref{eq:general2}) that has one element in $A$ and other element in $X$ is such that $\{s_1,s_2,s_3\}\subseteq A\cup X$ so the $3$--coloring is rainbow free.

Conversely, assume that the $3$--coloring is rainbow--free. Suppose first that $d_is+t_i=s$ for some $i\in \{1,2,\dots, 6\}$.
Then, it is not hard to see that (\ref{eq:s}) is satisfied. Since
the coloring is rainbow--free, any solution $\{s,u,v\}$ with $u\in
B$ is such that $v\in B\cup A$, but Observation~\ref{re:sss} together
with (\ref{eq:s}) indicate that, actually $v\in B$. Therefore
$T_i(B)\subseteq B$ for every $i\in \{1,2,\dots, 6\}$. The same is
true for $C$, and by cardinality we get (\ref{eq:Ti}).

Assume now that $d_is+t_i\neq s$ for every $i\in \{1,2,\dots, 6\}$.
With out loss of generality let $d_1s+t_1\in C$. Since $B\subseteq
T_1(B)\cup \{d_1s+t_1\}$ (Lemma~\ref{lem:basic}) then $B\subseteq
T_1(B)$. Thus $T_1(B)=B$, that is
\begin{equation}\label{eq:B1}
B=d_1B+t_1.
\end{equation}

Note that $d_1s+t_1=-a_3a_1^{-1}s+ba_1^{-1}-a_2a_1^{-1}s=d_2s+t_2$
then, by the same arguments, we get $T_2(B)=B$, that is
\begin{equation}\label{eq:B2}
B=d_2B+t_2.
\end{equation}

By a dilation of (\ref{eq:B2}) we get

\begin{equation}\label{eq:B3}
d_1B=d_2d_1B+d_1t_2.
\end{equation}

By a translation of (\ref{eq:B2}) we get $B-t_1=d_2B+t_2-t_1$ which
can be expressed as

\begin{equation}\label{eq:B4}
B-t_1=d_2(B-t_1)+(t_2-t_1+d_2t_1).
\end{equation}

By (\ref{eq:B1}) we know that $B-t_1=d_1B$, then it follows from
(\ref{eq:B3}) and (\ref{eq:B4}) that necessarily

$$d_1t_2=t_2-t_1+d_2t_1,$$

from which, by simple calculation, we get (\ref{eq:s}), a
contradiction by assumption.
\end{proof}


\section{Additive tools}\label{sec:ant}

Before treating the remaining cases $|A|=2$ and $|A|=3$, we give some results in Additive Number Theory. These results have been used previously  \cite{jetall,llm1} in solving arithmetic anti-Ramsey problems. The following is the main idea to do it:

As usual, for sets $X,Y\subseteq \Z_p$, let $X+Y=\{x+y:x\in X, y\in Y\}$. The well known Cauchy--Davenport's Theorem \cite{nat} states that for any $X,Y\subseteq \Z_p$ with $X+Y\neq \Z_p$, it happens that $|X+Y|\geq |X|+|Y|-1$. On the other hand, $\Z_p=A\cup B \cup C$  is a rainbow--free coloring, if and only if $a_iX+a_jY\cap -a_kZ+b=\emptyset$ for every $\{i,j,k\}=\{1,2,3\}$ and $\{X,Y,Z\}=\{A,B,C\}$. Equally:

\begin{equation}\label{eq:cont}
a_iX+a_jY\subseteq \Z_p\setminus (-a_kZ+b).
\end{equation}

Since $p=|X|+|Y|+|Z|$, and $|-a_kZ+b|=|Z|$, from (\ref{eq:cont}) on one side, and Cauchy--Davenport on the other we obtain:

\begin{equation}\label{eq:basic}
|X|+|Y|-1\leq |a_iX+a_jY|\leq |X|+|Y|
\end{equation}

The next two important results characterize the structure of subsets in $\Z_p$ with $|X+Y|=|X|+|Y|-1$, and $|X+Y|=|X|+|Y|$ respectively.

\begin{theorem}[Vosper \cite{vos}]\label{thm:vos}
Let $X,Y\subseteq \Z_p$ with $|X|,|Y|\geq 2$, and $$|X+Y|=|X|+|Y|-1\leq p-2.$$ Then both $X$ and $Y$ are arithmetic progressions with the same common difference.
\end{theorem}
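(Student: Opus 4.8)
The plan is to prove Vosper's theorem by affine normalization followed by induction on $|X|$, with base case $|X|=2$. Since translating $X$ or $Y$ by a constant and simultaneously dilating both by a fixed $d\in\Z_p^{\ast}$ preserves the hypothesis $|X+Y|=|X|+|Y|-1$ as well as the conclusion (these operations send arithmetic progressions to arithmetic progressions and preserve common differences), I may assume $2\le |X|\le |Y|$; note also that the hypothesis $|X+Y|\le p-2$ together with $|X+Y|=|X|+|Y|-1$ forces $|X|+|Y|\le p-1$, so $|Y|\le p-3$ and neither set is all of $\Z_p$.

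Base case $|X|=2$: after an affine transformation I may take $X=\{0,1\}$, so that $X+Y=Y\cup(Y+1)$ and the hypothesis becomes $|(Y+1)\setminus Y|=1$. The elements of $(Y+1)\setminus Y$ are exactly the points immediately following a maximal run of consecutive elements of $Y$ (this is legitimate since $Y\neq\Z_p$), so there is exactly one such run; hence $Y$ consists of consecutive elements, i.e. $Y$ is an arithmetic progression of difference $1$, matching the common difference of $X$.

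For the inductive step $|X|\ge 3$, I would locate an element $a_0\in X$ whose deletion shrinks the sumset by exactly one, that is $X':=X\setminus\{a_0\}$ satisfies $|X'+Y|=|X'|+|Y|-1$. Granting this, the pair $(X',Y)$ again meets the hypotheses with $|X'|=|X|-1\ge 2$ and $|X'+Y|\le |X+Y|\le p-2$, so by induction $X'$ and $Y$ are arithmetic progressions with a common difference $d$. Dilating so that $d=1$, both $X'$ and $Y$ become intervals; since $X+Y=(X'+Y)\cup(a_0+Y)$ exceeds the interval $X'+Y$ by only one element while $a_0+Y$ is itself an interval of length $|Y|\ge 2$, the translate $a_0+Y$ must abut $X'+Y$, which forces $a_0$ to be adjacent to $X'$. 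Hence $X$ is an interval as well, completing the induction.

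The main obstacle is proving the existence of a removable element $a_0$, since a naive count of representations does not by itself guarantee a uniquely represented point of $X+Y$. Here I would bring in the complement $C:=\Z_p\setminus(X+Y)$, which has $|C|=p-|X|-|Y|+1\ge 2$. A short argument shows $C-Y\subseteq\Z_p\setminus X$, and then Cauchy--Davenport forces the equality $C-Y=\Z_p\setminus X$ with $|C-Y|=|C|+|Y|-1$, so that $(C,-Y)$ is itself a critical pair. This dual critical relation controls the structure at the boundary of $X+Y$ and is what I would exploit to produce an element of $X+Y$ with a unique representation, equivalently the removable $a_0$. I expect this boundary analysis, rather than the base case or the lifting step, to be the technical heart of the argument.
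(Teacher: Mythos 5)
The paper offers no proof of this statement: it is Vosper's classical theorem, imported with a citation to \cite{vos} (a complete proof also appears in Nathanson's book \cite{nat}, which the paper cites for Cauchy--Davenport). So your proposal has to stand on its own, and as written it has a genuine gap at precisely the step you yourself flag: the existence of a removable element $a_0\in X$ with $\bigl|(X\setminus\{a_0\})+Y\bigr|=|X+Y|-1$. The surrounding scaffolding is sound. The base case is correct, since $|(Y+1)\setminus Y|$ counts the maximal runs of consecutive elements of a set $\emptyset\neq Y\neq \Z_p$, so one run means an interval. The lifting step is correct: a cyclic interval $a_0+Y$ of length $|Y|\geq 2$ whose difference with the interval $X'+Y$ (of length at most $p-3$) is a single point must abut one end of $X'+Y$, forcing $a_0$ adjacent to $X'$. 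The duality $C-Y=\Z_p\setminus X$ with $|C-Y|=|C|+|Y|-1$ is also correct and is indeed the classical tool. And you are right that a uniquely represented element of $X+Y$ yields the removable $a_0$, because Cauchy--Davenport pins $|X'+Y|$ from below at $|X+Y|-1$.

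What is missing is any argument taking you from the dual critical pair $(C,-Y)$ to a uniquely represented element, and this is not a routine deduction --- it is essentially the whole theorem. Note that $(C,-Y)$ satisfies exactly the hypotheses of the theorem being proved: $|C|=p-|X|-|Y|+1\geq 2$, $|Y|\geq 2$, and $|C-Y|=p-|X|\leq p-2$; so any appeal to ``structure at the boundary'' of this pair is circular without new input. You correctly observe that counting fails (one can have $|X||Y|\geq 2(|X|+|Y|-1)$), but the replacement you propose is only a statement of intent. A posteriori, uniquely represented elements exist in every such critical pair --- they are the two extreme sums of the two progressions --- but every known route to establishing this a priori requires real machinery: Vosper's original argument and Nathanson's exposition \cite{nat} run an induction through the Davenport/$e$-transform (replacing $Y$ by sets like $Y\cap(Y+e)$ and $Y\cup(Y+e)$ and controlling when the critical equality survives), and alternatives use Hamidoune's isoperimetric method or the polynomial method. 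In short, your outline is a valid reduction of Vosper's theorem to the removable-element lemma, together with a correct treatment of the easy endpoints of the induction; but that lemma is where the theorem lives, and it remains unproved in your write-up.
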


An \emph{almost arithmetic progression} with \emph{difference} $d$ in $\Z_p$ is an arithmetic progression with difference $d$, and one term removed. Observe that an arithmetic progression is an almost arithmetic progression, if the term removed is the initial or the final term of the original progression.

\begin{theorem}[Hamidoune--R{\o}dseth \cite{hr}]\label{thm:hr}
Let $X,Y\subseteq \Z_p$ with $|X|,|Y|\geq 3$ and $$7\leq |X+Y|=|X|+|Y|\leq p-4.$$ Then both $X$ and $Y$ are almost arithmetic progressions with the same common difference.
\end{theorem}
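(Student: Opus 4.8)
The plan is to treat this as the inverse problem one notch above the Vosper regime of Theorem~\ref{thm:vos}: there $|X+Y|=|X|+|Y|-1$ already forces exact arithmetic progressions, so here the single extra element of the sumset should cost each set exactly one deleted term. Writing $|X+Y|=\big(|X|+|Y|-1\big)+1$, the goal is the ``$3k-4$'' conclusion at excess $r=1$. I would split the argument into a \emph{sparse} regime, where $X$ and $Y$ are small relative to $p$, and a \emph{dense} regime, where $|X|+|Y|$ is comparable to $p$, since the two call for genuinely different tools.

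In the sparse regime I would \emph{rectify} to the integers. Because $|X+Y|\le p-4$ the sumset omits at least four residues, so after a common dilation $d\in\Z_p^{\ast}$ and suitable translations one obtains a Freiman $2$-isomorphism carrying $X$ and $Y$ to integer sets $X'$ and $Y'$ with $|X'+Y'|=|X+Y|$ and no wrap-around. Over $\Z$ one has the baseline $|X'+Y'|\ge|X'|+|Y'|-1$ together with the full almost-progression theory: the two-summand Freiman $3k-4$ theorem, in the Lev--Smeliansky/Grynkiewicz form, applies exactly because $\min(|X|,|Y|)\ge3$ brings the excess $r=1$ within its range of validity, and it forces $X'$ and $Y'$ to be almost arithmetic progressions with a common difference. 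The hypothesis $7\le|X+Y|$ is there to discard the one genuinely exceptional small configuration $|X|=|Y|=3$, where this conclusion can fail; undoing the dilation and the isomorphism then returns the desired structure for $X$ and $Y$ in $\Z_p$.

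The dense regime is the crux, and there rectification is unavailable. Here I would run Hamidoune's isoperimetric method directly in $\Z_p$: form the Cayley graph $\mathrm{Cay}(\Z_p,-Y)$, whose expansion is controlled by $|X+Y|$, and analyze its $2$-atoms, the minimal sets realizing the second isoperimetric connectivity. Under the hypothesis $|X+Y|=|X|+|Y|\le p-4$ one argues that these atoms must be arithmetic progressions, and then propagates the structure from an atom to all of $X$ (and symmetrically to $Y$, passing to the complementary sumset $\Z_p\setminus(X+Y)$, of size at least four, to stay in the admissible range) to recover the common-difference almost-progression description. The main obstacle is precisely this atom classification in the dense range: one must exclude sporadic non-progression atoms and then reassemble the global structure of $X$ and $Y$ from the local structure of a single atom while maintaining one common difference. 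It is this uniform-density analysis, rather than the sparse computation, that makes the theorem genuinely hard, and it is the reason the isoperimetric route rather than rectification underlies the original Hamidoune--R{\o}dseth argument.
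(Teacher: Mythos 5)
The first thing to note is that the paper does not prove this statement at all: Theorem~\ref{thm:hr} is quoted from Hamidoune and R{\o}dseth \cite{hr} with an attribution and used as a black box (in Lemma~\ref{lem:useful3}), so there is no internal proof to compare yours against; what can be judged is whether your sketch would stand on its own. It would not, for two concrete reasons. First, your sparse-regime rectification rests on a false implication: the hypothesis $|X+Y|\leq p-4$ only says the sumset misses four residues, and that is nowhere near enough to produce a Freiman $2$-isomorphism onto subsets of the integers. Rectification arguments require genuine sparseness (sets contained in short intervals after a dilation, or $|X|+|Y|$ a small fraction of $p$), and producing such a dilation is itself a nontrivial inverse-type statement; it cannot be extracted from four missing residues. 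Moreover, you never make the sparse/dense dichotomy quantitative, so there is no guarantee the two halves of your argument cover all cases allowed by the hypotheses.

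Second, and more seriously, in the dense regime --- which you yourself identify as the crux --- your text describes a method rather than giving a proof: ``one argues that these atoms must be arithmetic progressions, and then propagates the structure'' is precisely the content of the Hamidoune--R{\o}dseth theorem, so as written the proposal assumes what it must prove. The classification of the $2$-atoms of $\mathrm{Cay}(\Z_p,-Y)$ under $|X+Y|=|X|+|Y|\leq p-4$, the exclusion of sporadic non-progression atoms, and the passage from the local structure of one atom to the global statement that \emph{both} $X$ and $Y$ are almost progressions \emph{with a common difference} constitute the technical body of \cite{hr}; naming these steps as obstacles does not discharge them. Your reading of the hypothesis $7\leq |X+Y|$ (it excludes the exceptional $|X|=|Y|=3$ configurations, visible in Lemma~\ref{lem:X3Y3}\,ii) is correct, and your instinct that the isoperimetric method underlies the original argument is also right, but as it stands the proposal amounts to a citation of \cite{hr} presented as a proof --- which is what the paper itself does, more transparently, by stating the theorem with no proof.
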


We will also need the following technical lemma. We called an arithmetic progression with difference $d=1$, an \emph{interval}. 

\begin{lemma}\label{lem:technical}
Let $X\subseteq \Z_p$ with $5 \leq |X| \leq p-5$. If both $X$ and $tX$ are the union of at most two arithmetic progressions with the same common difference $d$, then $t\in \{0, \pm 1, \pm2, \pm 2^{-1}\}$. 
\end{lemma}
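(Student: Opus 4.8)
The plan is to normalize by dilation so that the common difference is $d=1$, and then to read the two hypotheses as statements about how much a set overlaps its own translates.

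First I would replace $X$ by $d^{-1}X$: the hypothesis becomes that both $X$ and $tX=d^{-1}(tX_{\mathrm{old}})$ are unions of at most two intervals (progressions with difference $1$), while $t$ is unchanged. The value $t=0$ is immediate since $\{0\}$ is a single progression, so assume $t\neq 0$ and set $g=t^{-1}$. For a nonempty proper subset $S\subsetneq\Z_p$, being a union of at most two intervals is equivalent to having at most two maximal runs, i.e. to $|S\triangle(S+1)|\le 4$. Applying this to $tX$ and using $(tX)\triangle(tX+1)=t\bigl(X\triangle(X+g)\bigr)$, the condition ``$tX$ is a union of at most two intervals'' is exactly $|X\triangle(X+g)|\le 4$, that is $N(g):=|X\cap(X+g)|\ge \ell-2$, where $\ell=|X|$. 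Since the target set $\{0,\pm1,\pm2,\pm2^{-1}\}$ is closed under inversion, it then suffices to prove: if $X$ is a union of at most two intervals with $5\le\ell\le p-5$ and $N(g)\ge\ell-2$, then $g\in\{0,\pm1,\pm2,\pm2^{-1}\}$. Finally, because $\bar X$ is also a union of at most two intervals with $|\bar X|=p-\ell\in[5,p-5]$ and $\bar X\triangle(\bar X+g)=X\triangle(X+g)$, I may assume $\ell\le(p-1)/2$.

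In the single-interval case, after translation $X=\{0,1,\dots,\ell-1\}$, and $N(g)$ is just the number of pairs of $X$ at difference $g$, which equals $\max(0,\ell-\|g\|)$ where $\|g\|$ is the cyclic distance of $g$ from $0$; no wrap-around occurs since $\ell\le(p-1)/2$. Hence $N(g)\ge\ell-2$ forces $\|g\|\le 2$, giving $g\in\{0,\pm1,\pm2\}$.

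The substantive case is $X=I_1\cup I_2$ with two genuinely separated intervals $I_1=\{0,\dots,m_1-1\}$ and $I_2=\{c,\dots,c+m_2-1\}$, $m_1+m_2=\ell$, both gaps positive; note the gaps sum to $p-\ell\ge 5$, so they cannot both be tiny. I would write $N(g)=\sum_{j,k}|I_j\cap(I_k+g)|$, a sum of four tent functions: the two diagonal terms peak at $g=0$ with combined height $\ell$, while the two cross terms $|I_1\cap(I_2+g)|$ and $|I_2\cap(I_1+g)|$ peak near $g=-c$ and $g=+c$, each of height at most $\min(m_1,m_2)\le\ell/2$. Equivalently I track the deficits $\alpha(g)=|I_1\setminus(X+g)|$ and $\beta(g)=|I_2\setminus(X+g)|$, so that $N(g)\ge\ell-2$ reads $\alpha(g)+\beta(g)\le 2$. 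Outside the bounded supports of these tents $N(g)=0$, so only two regimes can meet the bound. Near $g=0$ the cross terms vanish and $\alpha+\beta=2\|g\|$ up to corrections controlled by the gap sizes; using $\ell\ge5$ together with the fact (from $\ell\le p-5$) that the gaps are not both small, one checks $\alpha(g)+\beta(g)\le 2$ only for $g\in\{0,\pm1\}$. Near $g=\pm c$ a single cross term alone gives at most $\min(m_1,m_2)\le\ell/2<\ell-2$, so the bound can hold only if the two cross tents reinforce, i.e. if $\|2c\|$ is small; the covering requirements ``$I_2\subseteq X+g$'' and ``$I_1\subseteq X+g$'' then pin down $2c\equiv\pm1$, hence $c\equiv\pm2^{-1}$ and $g=\pm2^{-1}$, with balanced lengths $|m_1-m_2|\le2$. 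Combining the two cases gives $g\in\{0,\pm1,\pm2,\pm2^{-1}\}$, and inverting, $t\in\{0,\pm1,\pm2,\pm2^{-1}\}$.

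The hard part is the two-interval analysis, precisely the bookkeeping that (i) rules out every intermediate shift and every near-$0$ shift beyond $\pm1$, and (ii) shows the reinforced cross-peak is narrow enough to select exactly $g=\pm2^{-1}$ rather than a neighbour such as $2^{-1}+1$. Both points lean on the size hypotheses: $\ell\ge5$ prevents a lone cross term (of height at most $\ell/2$) from reaching $\ell-2$, while $\ell\le p-5$ (gaps summing to at least $5$) prevents the cross tents from drifting into the diagonal region near $0$ and fabricating spurious solutions. I expect the cleanest execution to phrase everything through the deficits $\alpha,\beta$ and the cyclic distances $\|g\|$, $\|g\mp c\|$, $\|2c\|$: the two covering requirements force $\|g-c\|$ and $\|g+c\|$ both small, whence $\|2c\|\le\|g-c\|+\|g+c\|$ is small, and the integer budget $\alpha+\beta\le2$ then excludes $\|2c\|\in\{2,3,4\}$ (by genuine separation or by misalignment cost exceeding $2$) and forces the exact value $2c=\pm1$.
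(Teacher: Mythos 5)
Your proposal is correct in substance and is essentially the paper's own argument in dual form: the paper shifts $Y=tX$ by $t$ and exploits $|(Y+t)\setminus Y|=|(X+1)\setminus X|\le 2$, while you shift $X$ by $g=t^{-1}$ and exploit $|X\triangle(X+g)|=|(tX)\triangle(tX+1)|\le 4$ (legitimate, since the target set is closed under inversion), and then both proofs split into the same two regimes --- each interval nearly matching its own translate, giving small $g$, versus the two intervals swapping, where the two alignment constraints force the \emph{doubled} shift to be a small integer (your $\|g-c\|,\|g+c\|$ bookkeeping is exactly the paper's $y_1+t=y_3+\epsilon_1$, $y_3+t=y_1+\epsilon_2$, $2t=\epsilon_1+\epsilon_2$ with $|\epsilon_1|+|\epsilon_2|\le 2$, yielding $\pm 2^{-1}$). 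One micro-claim is off but harmless: in the diagonal regime $g=\pm 2$ can in fact survive with two intervals when one gap has size $1$ (e.g.\ $X=\{0\}\cup\{2,\dots,\ell\}$ has $|X\setminus(X+2)|=2$), so that regime yields $g\in\{0,\pm1,\pm2\}$ rather than your $\{0,\pm1\}$ --- still inside the target set, so the conclusion is unaffected.
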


\begin{proof}
We may assume, with out loss of generality, that $5 \leq |X| \leq \frac{p-1}{2}$, otherwise we take $ \Z_p\setminus X$. Also we suppose that $d=1$, that is,  $X$ is  the union of at most two intervals, otherwise we analyze $d^{-1}X$. By hypothesis, $Y=tX$ is also the union of at most two intervals, $Y_1$ and $Y_2$. Note that $|(X+1)\setminus X|\leq 2$, and so:  
\begin{equation}\label{eq:lem}
|(Y+t)\setminus Y|\leq 2.
\end{equation}
From here we consider two cases. Suppose first that either $Y_1\cap (Y_1+t)\neq \emptyset$ or $Y_2\cap (Y_2+t)\neq \emptyset$. Assume without loss of generality $Y_1\cap (Y_1+t)\neq \emptyset$. Then, since $|Y | \leq \frac{p-1}{2}$, it follows that $|t|\leq |(Y+t)\setminus Y|$, which together with (\ref{eq:lem}) implies  that $t\in \{0, \pm 1, \pm2\}$. Suppose now that $Y_1\cap (Y_1+t) =\emptyset$ and $Y_2\cap (Y_2+t) =\emptyset$. Then, by  (\ref{eq:lem})  it follows that:
\begin{equation}\label{eq:lem2}
|(Y_1+t)\setminus Y_2|+|(Y_2+t)\setminus Y_1|\leq 2 
\end{equation}
We shall note that (\ref{eq:lem2}) implies that the cardinalities of $Y_1$ and $Y_2$ differ at most in $2$ elements. Moreover, letting $Y_1=[y_1,y_2]$, and $Y_2=[y_3,y_4]$, it must be that: $y_1+t=y_3+\epsilon_1$, with $|\epsilon_1|\leq 2$; and $y_3+t=y_1+\epsilon_2$, with $|\epsilon_2|\leq 2$; where $|\epsilon_1|+|\epsilon_2|\leq 2$. From which it follows that  $2t=\epsilon_1+\epsilon_2$, and according to $|\epsilon_1|+|\epsilon_2|\leq 2$,  in all possible cases we get $t\in \{0, \pm 1, \pm2, \pm 2^{-1}\}$.

\end{proof}


\section{The case $|A|=2$}\label{sec:A=2}

In this section we prove that there are no rainbow-free colorings of $\Z_p$ concerning Equation~(\ref{eq:general2}) such that the smallest  color class has two elements. First let us note a useful fact. 

\begin{lemma}\label{lem:useful}
Let $\Z_p=A\cup B \cup C$  be a rainbow--free coloring with $|A|=2\leq |B|\leq |C|$. 
For any choice of  $\{i,j,k\}=\{1,2,3\}$, the sets  $a_iB$, $a_iC$, $a_jB$ and $a_jC$ are unions of at most two arithmetic progressions with difference $d$, where $d$ is the difference between the two elements in $a_kA$.
\end{lemma}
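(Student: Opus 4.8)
The plan is to exploit the basic inequality~(\ref{eq:basic}) derived from Cauchy--Davenport. The key observation is that, for a rainbow--free coloring and any choice of $\{i,j,k\}=\{1,2,3\}$ with $\{X,Y,Z\}=\{A,B,C\}$, the containment~(\ref{eq:cont}) forces $|a_iX+a_jY|\in\{|X|+|Y|-1,|X|+|Y|\}$. First I would fix a pair among $\{B,C\}$ and apply this to the sumset $a_iB+a_jC$. Since dilation by a nonzero scalar preserves cardinality and maps arithmetic progressions to arithmetic progressions (rescaling the common difference), the structural conclusions of Vosper's Theorem~\ref{thm:vos} and Hamidoune--R{\o}dseth's Theorem~\ref{thm:hr} apply directly to $a_iB$ and $a_jC$. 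In the tight case $|a_iB+a_jC|=|a_iB|+|a_jC|-1$, Vosper gives that both $a_iB$ and $a_jC$ are single arithmetic progressions with a common difference; in the slack case $|a_iB+a_jC|=|a_iB|+|a_jC|$, Hamidoune--R{\o}dseth gives that both are \emph{almost} arithmetic progressions, i.e.\ unions of at most two arithmetic progressions with a common difference. Either way, $a_iB$ and $a_jC$ are unions of at most two arithmetic progressions sharing a common difference.

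The next step is to pin down that this common difference is precisely the difference $d$ between the two elements of $a_kA$. Here the role of $|A|=2$ is decisive: writing $a_kA=\{\alpha,\alpha+d\}$, the complementary set $-a_kZ+b$ that $a_iX+a_jY$ must avoid has exactly two elements, and the extremal structure of the sumset is controlled by how it fills up $\Z_p$ relative to this two--point hole. I would argue that the common difference of the almost-progressions must equal $d$ by tracking the endpoints: the two ``missing'' points of the sumset (relative to a full progression) correspond to the two elements of $-a_kA+b$, and their separation is $d$. This matches the difference of $a_kA$ after accounting for the dilation, so the common difference coincides with $d$.

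To get the full statement for all four sets $a_iB,a_iC,a_jB,a_jC$ simultaneously, I would run the argument over the relevant choices of which variable plays the role of the singleton (that is, varying $k$ and the assignment of $B,C$ to the roles $X,Y$), and then observe that the same difference $d$ governs every case because $a_kA$ is always the two--element class in the role forcing the structure. Since dilations are invertible, once $a_iB$ is a union of at most two progressions with difference $d$, so is $a_jB$ with the correspondingly rescaled difference; being careful here means I state the difference relative to each $a_k$ consistently, which is exactly how the lemma phrases it via ``the difference between the two elements in $a_kA$.''

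The main obstacle I anticipate is twofold. First, the hypotheses of Theorems~\ref{thm:vos} and~\ref{thm:hr} carry size restrictions ($|X|,|Y|\ge 2$ and $\le p-2$ for Vosper; $|X|,|Y|\ge 3$ and $7\le|X+Y|\le p-4$ for Hamidoune--R{\o}dseth), so I would need to handle the boundary regimes where $|B|$ is very small or the sumset is too large or too small to invoke these theorems; these degenerate cases likely require direct ad hoc arguments or the simpler Cauchy--Davenport equality cases. Second, and more delicate, is verifying that the common difference is the \emph{same} $d$ across all four sumsets rather than merely being \emph{some} common difference within each individual application; reconciling the differences obtained from distinct applications (which a priori could differ) and showing they are all forced to be the single value determined by $a_kA$ is where the real work lies, and is what Lemma~\ref{lem:technical} is evidently being set up to exploit in the subsequent lemmas.
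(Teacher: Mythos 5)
There is a genuine gap, and it sits exactly where your proposal leans hardest on the two inverse theorems. Your plan is to apply Theorem~\ref{thm:vos} and Theorem~\ref{thm:hr} to the sumset $a_iB+a_jC$ of the two large classes, but in this lemma the hypotheses of Hamidoune--R{\o}dseth are \emph{never} satisfied: since $|A|=2$ we have $|B|+|C|=p-2$, so in the slack case $|a_iB+a_jC|=|B|+|C|=p-2>p-4$, violating the upper bound in Theorem~\ref{thm:hr}; and if you instead pair $A$ with $B$ (or $C$), the slack case $|a_kA+a_iB|=|B|+2$ cannot be treated either, because Theorem~\ref{thm:hr} requires both summands to have at least $3$ elements while $|a_kA|=2$. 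So the slack case --- which is one of the two generic alternatives forced by~(\ref{eq:basic}), not a ``boundary regime'' --- is left with no tool, and your fallback of ``direct ad hoc arguments'' is precisely the entire content of the lemma. A secondary gap is that even in the tight case, Vosper only yields \emph{some} common difference $d'$ for $a_iB$ and $a_jC$, a priori unrelated to the difference $d$ of $a_kA$; your ``endpoint tracking'' sketch of why $d'=\pm d$ is not an argument (it can be patched, e.g.\ by passing to complements and using that a $3$-element set determines its difference up to sign, but that needs $p\geq 11$ and is not written).

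The paper's proof avoids all of this and is completely elementary, exploiting $|A|=2$ directly rather than feeding it into an inverse theorem. Write $a_kA=\{\alpha,\alpha+d\}$. Then
\begin{equation*}
a_kA+a_iB=(a_iB+\alpha)\cup(a_iB+\alpha+d),
\end{equation*}
so $|a_kA+a_iB|=|B|+\bigl|(a_iB+d)\setminus a_iB\bigr|$. Rainbow-freeness via~(\ref{eq:cont}) gives $|a_kA+a_iB|\leq p-|C|=|B|+2$, hence $\bigl|(a_iB+d)\setminus a_iB\bigr|\leq 2$, which says exactly that $a_iB$ has at most two maximal $d$-progression components, i.e.\ is a union of at most two arithmetic progressions with difference $d$ --- with the difference $d$ of $a_kA$ appearing automatically, no reconciliation step needed. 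Repeating over the choices of indices and over $B,C$ gives all four sets. In short: the correct mechanism is the two-translate decomposition of the sumset with the two-element class, not Vosper/Hamidoune--R{\o}dseth; those theorems are the right tools for the later cases $|A|=2\leq 3\leq|B|$ and $|A|=3$, but not here.
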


\begin{proof}
It follows from (\ref{eq:basic}) that, for any choice of $\{i,j,k\}=\{1,2,3\}$, we have: 

\begin{equation}\label{eq:2b}
|B|+1\leq |a_iA+a_jB| \leq |B|+2. 
\end{equation}

Suppose that the difference between the  two elements in $a_iA$ is $d$. Since $|a_jB|=|B|$, then necessarily $a_jB$ is the union of at most two arithmetic progressions with difference $d$, and the same will be true for $a_kB$. By repeating this argument we conclude the claim.
\end{proof}

Next, we consider the case where two of the coefficients in Equation~(\ref{eq:general2}) are equal. That is, with out loss of generality, we handle equation: $x+y+cz=b$ where $c\neq 1$.   

\begin{proposition}\label{prop:1}
Every $3$--coloring $\Z_p=A\cup B \cup C$ with $|A|=2$ contains a rainbow solution of $x+y+cz=b$ where $c\neq 1$.
\end{proposition}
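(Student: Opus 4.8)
The plan is to prove this by contradiction: assume $\Z_p=A\cup B\cup C$ is a rainbow--free coloring of $x+y+cz=b$ with $|A|=2\leq|B|\leq|C|$, and derive enough rigidity on the color classes to force a rainbow solution. The starting point is Lemma~\ref{lem:useful}, which tells us that (for every choice of indices) the relevant dilates $a_iB$, $a_iC$ are unions of at most two arithmetic progressions sharing a common difference determined by the gap in the two--element class. Since here $a_1=a_2=1$ and $a_3=c$, the two elements of $A$ give a gap $d$, and the engine of the proof will be to track how this difference $d$ behaves when we pass between the dilates $B$, $cB$, $C$, $cC$. The key structural consequence is that $B$ (and $C$) is a union of at most two arithmetic progressions of difference $d$, while $cB$ is a union of at most two arithmetic progressions of difference $d'$; comparing these two descriptions of essentially the same set is exactly the situation governed by Lemma~\ref{lem:technical}.

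First I would set up the size regime carefully. Lemma~\ref{lem:technical} requires $5\leq|X|\leq p-5$, so I would isolate the genuinely small cases (when $|B|$ or the complement is tiny) and dispose of them by direct ad hoc arguments, since for very small color classes one can simply exhibit a rainbow solution or check the finitely many configurations by hand. In the main regime, with $|A|=2$ and $|B|,|C|$ both large, I would apply Lemma~\ref{lem:technical} to $X=B$ (or an appropriate dilate), using the two competing arithmetic--progression descriptions to conclude that the dilation factor $t$ relating them lies in the restricted set $\{0,\pm1,\pm2,\pm2^{-1}\}$. Translating this back, the coefficient $c$ (and/or $c^{-1}$, $-c$, etc.) is forced into a short list of possibilities. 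This is the heart of the reduction: the additive rigidity collapses the problem to finitely many admissible values of $c$.

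Next I would handle each surviving value of $c$ in $\{0,\pm1,\pm2,\pm2^{-1}\}$ (excluding $c=1$ by hypothesis, and $c=0$ by $a_1a_2a_3\neq0$). For each such $c$ the color classes are now known to be (almost) arithmetic progressions with a common difference, so after a dilation I may normalize $d=1$ and treat $A,B,C$ as unions of at most two intervals. At this level of concreteness, rainbow--freeness becomes a set of explicit containment constraints of the form \eqref{eq:cont}, namely $A+B$ avoids $-c\,C+b$ and its permutations. I would show that these interval constraints cannot all hold simultaneously when $|A|=2$: the two elements of $A$ force $A+B$ to be slightly larger than $B$, and the avoidance condition combined with $p=|A|+|B|+|C|$ leaves no room, producing an element witnessing a rainbow solution. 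Each case reduces to a short counting or endpoint--chasing argument on intervals.

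The main obstacle I expect is the casework in the final step rather than the structural reduction: once Lemma~\ref{lem:technical} pins down $c$, one must still verify \emph{non--existence} for each admissible $c$, and the symmetric nature of the equation $x+y+cz=b$ (where a solution is an unordered triple, so $c$ can play any of the three coordinate roles) means the constraints interact in a slightly delicate way. Care is also needed at the boundary of the size regime, ensuring the small cases excluded from Lemma~\ref{lem:technical} are genuinely covered. I would organize the argument so that the generic large case is dispatched uniformly via the additive tools, and only a bounded, $p$--independent amount of checking remains for the exceptional values of $c$ and the small color classes.
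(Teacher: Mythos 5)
Your reduction via Lemma~\ref{lem:useful} and Lemma~\ref{lem:technical} is sound in the regime where some color class (or its complement) has cardinality between $5$ and $p-5$, but it leaves a genuine gap at the small cases, and that gap cannot be closed the way you claim. Proposition~\ref{prop:1} must cover $|B|=2$ (it is invoked in exactly that situation inside the paper's Lemma~\ref{lem:A2B2}), and when $|A|=|B|=2$ we have $|C|=p-4$: then none of $B$, $C$, nor their complements has size in $[5,p-5]$, so Lemma~\ref{lem:technical} yields no restriction on $c$ at all. At that point you must prove the statement for an \emph{arbitrary} coefficient $c\notin\{0,1\}$, and your proposed remedy --- ``check the finitely many configurations by hand'' --- does not exist: the configurations are parametrized by $c$, $b$ and the positions of the four points of $A\cup B$, all ranging over $\Z_p$, so there is nothing finite or $p$--independent to check. (Settling $|A|=|B|=2$ for general coefficients is precisely the content of Lemma~\ref{lem:A2B2}, which needs Vosper's theorem, Theorem~\ref{thm:vos}, and a comparison of difference sets; it is not a routine verification.)

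The idea you are missing is the one that makes the paper's proof short. For every $c\neq-2$ the coefficient sum $1+1+c$ is nonzero, so Lemma~\ref{lem:b} translates the problem to $x+y+cz=0$, i.e.\ to $x+y=(-c)z$, and Theorem~\ref{thm:llm} then says that a rainbow--free coloring of that equation whose smallest class has at least two elements exists only when $-c=-1$, i.e.\ $c=1$, which is excluded by hypothesis. This disposes of \emph{all} $c\neq-2$ at once, for \emph{all} class sizes, with no additive machinery; only the single equation $x+y-2z=b$ (where the reduction fails because $1+1+c=0$) requires the interval analysis via Lemma~\ref{lem:useful}. Your plan instead pushes the hard work to the end: besides the $|B|=2$ gap above, you would still have to carry out a separate endpoint--chasing contradiction for each of $c\in\{-1,\pm2,\pm2^{-1}\}$, none of which you execute, and each of which is comparable in length and delicacy to the paper's two--case treatment of $c=-2$. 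As written, the proposal is a plan whose structural reduction works only in the large regime and whose terminal cases --- the actual content of the proposition --- remain unproved.
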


\begin{proof}
By Lemma~\ref{lem:b} and Theorem~\ref{thm:llm} the statement is true for $c\neq -2$. If $c=-2$, then Lemma~\ref{lem:useful} states that $B$, $C$, $-2B$ and $-2C$ are all sets which are union of at most two arithmetic progressions with difference $d$, where $d$ is the difference between the two elements of $A$. By the sake of comprehension we will assume that $d=1$, otherwise we can analyze the form of the partition $\Z_p=d^{-1}A\cup d^{-1}B\cup d^{-1}C$. Recall we called an arithmetic progression with difference one, an \emph{interval}. Let $A=\{t,t+1\}.$

\textbf{Case 1.}   Some  $X\in \{B,C\}$, say $B$, is an interval. Since $2\leq |B|\leq p-4$ and $-2B$ is union of at most two intervals then necessarily  $|B|=2$. Moreover, since also $-2C$ is union of at most two intervals, then actually $B=\{t+2^{-1},t+2^{-1}+ 1\}$ or  $B=\{t+2^{-1},t+2^{-1}- 1\}$. Note that in both cases $-2B$ is a two element set whose difference is $2$. Recall now that a rainbow--free coloring for $x+y-2z=b$ satisfies $2B+b\subseteq \Z_p\setminus (A+C)$, which is a contradiction, since  $\Z_p\setminus (A+C)$ is a two element set whose difference is $2^{-1}$, and  $2B+b$ (as well as $-2B$) is a two element set whose difference is $2$. 

\textbf{Case 2.}   Both $B$ and $C$ are not intervals, they are  union of exactly two intervals. Suppose with out loss of generality that $t+2^{-1}\in B$.  Then, it is not hard to see that  either $B=\{t+2^{-1}+i\}_{i=0}^{i=k}\cup \{t+2+i\}_{i=0}^{i=k-1}$, $B=\{t+2^{-1}+i\}_{i=0}^{i=k+1}\cup \{t+2+i\}_{i=0}^{i=k-1}$, $B=\{t+2^{-1}-i\}_{i=0}^{i=k}\cup \{t-1-i\}_{i=0}^{i=k-1}$ or $B=\{t+2^{-1}-i\}_{i=0}^{i=k+1}\cup \{t-1-i\}_{i=0}^{i=k-1}$,  where $1\leq k\leq \frac{p+1}{2}-3$. In any case $-2C$ is an interval. Recall now that a rainbow--free coloring for $x+y-2z=b$ satisfy $2C+b\subseteq \Z_p\setminus (A+B)$, which is a contradiction.
 \end{proof}

Next, we consider two more specific equations that arise naturally from the proofs of  Lemmas~\ref{lem:A2B3} and~\ref{lem:A2B2} below. 

\begin{proposition}\label{prop:2}
Every $3$--coloring $\Z_p=A\cup B \cup C$ with $|A|=2$ contains a rainbow solution of $x-y+2z=b$ (respectively, $x-y+2^{-1}z=b$).
\end{proposition}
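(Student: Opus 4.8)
```latex
\textbf{Proof proposal.} The plan is to treat the equation $x-y+2z=b$ in detail and obtain the companion equation $x-y+2^{-1}z=b$ by a symmetric argument (indeed, dividing $x-y+2z=b$ through by $2$ and relabeling variables produces an equation of the same shape with coefficient $2^{-1}$, so the two cases are dual under dilation). First I would invoke Lemma~\ref{lem:useful}: since $|A|=2$, for every choice of $\{i,j,k\}=\{1,2,3\}$ the sets $a_iB,a_iC,a_jB,a_jC$ are unions of at most two arithmetic progressions with common difference $d$, where $d$ is the gap between the two elements of the appropriately dilated copy of $A$. For the equation $x-y+2z=b$ the coefficients are $(1,-1,2)$, so the relevant dilations are $B,C,-B,-C,2B,2C$. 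As in Proposition~\ref{prop:1}, I would normalize so that the difference $d=1$ (otherwise pass to $d^{-1}A\cup d^{-1}B\cup d^{-1}C$) and write $A=\{t,t+1\}$; thus $B$ and $C$ are each a union of at most two intervals, and the key leverage is that $2B,2C$ (and $-B,-C$) are \emph{also} unions of at most two intervals.

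The main engine is Lemma~\ref{lem:technical}: if $X$ is a union of at most two intervals with $5\le|X|\le p-5$ and $tX$ is likewise a union of at most two intervals, then $t\in\{0,\pm1,\pm2,\pm2^{-1}\}$. Applying this with $X=B$ (or $X=C$) and $t=2$ is automatically consistent, so the constraint from dilating by $2$ alone is not contradictory, which is why this equation needs the separate, finer analysis being carried out here rather than falling to the general Vosper/Hamidoune--R{\o}dseth machinery. The strategy is therefore to use the rainbow-free containment $a_iX+a_jY\subseteq \Z_p\setminus(-a_kZ+b)$ from (\ref{eq:cont}) together with the tight Cauchy--Davenport bound (\ref{eq:basic}) to pin down the exact geometric position of the intervals. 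Concretely, a rainbow-free coloring for $x-y+2z=b$ forces $2Z+b\subseteq \Z_p\setminus(X-Y)$ for each assignment $\{X,Y,Z\}=\{A,B,C\}$; since $|\Z_p\setminus(X-Y)|=|Z|$ is small and its structure is rigid (a union of few intervals, being the complement of a sumset of near-minimal size), I would compare the interval structure of $2Z+b$ against that of $\Z_p\setminus(X-Y)$ and derive a numerical contradiction, exactly mirroring the contradiction reached in Cases~1 and~2 of Proposition~\ref{prop:1}.

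The plan splits, as in Proposition~\ref{prop:1}, according to whether some color class is a single interval or both $B$ and $C$ are genuinely two-interval sets. In the single-interval case I expect $|B|=2$ to be forced (since $-B$ and $2B$ must also be unions of at most two intervals while $B$ is an interval of controlled length), and then an explicit position computation will show that $2B+b$ is a two-element set whose difference mismatches the difference of the complement set it must sit inside, giving the contradiction. In the two-interval case I would locate the ``seam'' elements of $B$ and $C$ relative to $t,t+1$ using the $d=1$ normalization, enumerate the possible interval endpoints (as in the displayed four-family description in Proposition~\ref{prop:1}), and check that dilation by $2$ destroys the two-interval structure of one of $\pm B,\pm C$ unless a forbidden overlap occurs.

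\textbf{Main obstacle.} The hard part will be the bookkeeping in the two-interval case: because dilation by $2$ is compatible with being a union of two intervals (this is precisely the borderline value flagged by Lemma~\ref{lem:technical}), I cannot rule the configuration out by a crude counting bound and must instead track the \emph{exact} endpoints and gaps of the four dilated sets $B,C,2B,2C$ simultaneously, then feed them into the rigid complement containment to extract the contradiction. Managing the case distinctions for the placement of the intervals (and keeping the $\pm$ and $2^{-1}$ symmetry under control so that the companion equation follows without a full re-derivation) is where the real work lies; the additive-combinatorial inputs themselves are already in hand.
```
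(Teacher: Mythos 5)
Your skeleton for the equation $x-y+2z=b$ is the paper's own: invoke Lemma~\ref{lem:useful} (for this equation it yields that $B$, $C$, $2B$, $2C$ are unions of at most two arithmetic progressions with common difference $d$, where $d$ is the difference between the two elements of $-A$), normalize to $d=1$, and rerun the two-case interval analysis of Proposition~\ref{prop:1}, deriving the contradiction from the containment $a_iX+a_jY\subseteq\Z_p\setminus(-a_kZ+b)$. That is exactly what the paper does (its proof is literally ``analogous to the proof of the previous proposition''), and your self-correction about Lemma~\ref{lem:technical} is accurate: $t=2$ is a value that lemma cannot exclude, so it contributes nothing here; in the paper it is only used later, in Lemma~\ref{lem:A2B3}, to reduce the general equation to these special ones.

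The genuine gap is your treatment of the second equation. Dividing $x-y+2z=b$ by $2$ gives coefficients $(2^{-1},-2^{-1},1)$, which is a scalar multiple of $(1,-1,2)$ --- that is, the \emph{same} equation again, not $x-y+2^{-1}z=b$. Since dilations and translations preserve the coefficient triple of a linear equation up to permutation and scaling, and the multiset $\{1,-1,2^{-1}\}$ (equivalently, after scaling by $2$, $\{2,-2,1\}$) is not a scalar multiple of $\{1,-1,2\}$ for $p>5$, there is no dilation duality between the two equations; this is precisely why the paper's statement and proof carry the ``respectively'' and run two parallel arguments instead of reducing one case to the other. Worse, the $2^{-1}$ case is not a formal mirror of the $2$ case: the step in your Case~1 where $|B|=2$ is forced relies on the fact that dilating an interval (difference $1$) by $2$ shatters it into pairwise non-adjacent points, whereas dilating an interval by $2^{-1}$ produces a union of at most two intervals (the images of the even- and odd-indexed terms), which is perfectly compatible with the conclusion of Lemma~\ref{lem:useful}. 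So for $x-y+2^{-1}z=b$ the casework must actually be redone with $2^{-1}B$ and $2^{-1}C$ and different geometric reasoning; it cannot be transported ``without a full re-derivation'' as you propose.
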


\begin{proof}
The proof is analogous to the proof of the previous proposition. Concerning the equation $x-y+2z=b$ (respectively, $x-y+2^{-1}z=b$) by Lemma~\ref{lem:useful} we know $B$, $C$, $2B$ and $2C$  (respectively,  $B$, $C$, $2^{-1}B$ and $2C^{-1}$ ) are union of at most two arithmetic progressions with difference $d$, where $d$ is the difference between the two elements of $-A$.
\end{proof}

Now, we are ready to prove the lemmas who dismiss in general  the existence of rainbow--free colorings with the smallest color class of size two.

\begin{lemma}\label{lem:A2B3}
Every $3$--coloring $\Z_p=A\cup B \cup C$ with $|A|=2$ and $3\leq |B|\leq |C|$ contains a rainbow solution of Equation~(\ref{eq:general2}).
\end{lemma}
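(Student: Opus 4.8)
The plan is to prove Lemma~\ref{lem:A2B3} by leveraging the additive machinery of Section~\ref{sec:ant} to force strong structural constraints on the color classes, and then to reduce to the already-settled special equations of Propositions~\ref{prop:1} and~\ref{prop:2}. Suppose for contradiction that $\Z_p=A\cup B\cup C$ is rainbow--free for Equation~(\ref{eq:general2}) with $|A|=2$ and $3\leq|B|\leq|C|$. The first step is to apply the Cauchy--Davenport bound~(\ref{eq:basic}) to each pair of color classes, which gives, for each $\{i,j,k\}=\{1,2,3\}$ and each assignment $\{X,Y,Z\}=\{A,B,C\}$, that $|X|+|Y|-1\leq |a_iX+a_jY|\leq |X|+|Y|$. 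In particular, combining the bounds with $A$ always playing the role of the smallest class, Lemma~\ref{lem:useful} already tells us that $a_iB$, $a_iC$, $a_jB$, $a_jC$ are each a union of at most two arithmetic progressions with a common difference $d$ determined by $a_kA$. The aim is to upgrade this to the conclusion that, after a suitable dilation, the coefficients $a_1,a_2,a_3$ must be essentially proportional to one of the special patterns $\{1,1,c\}$ or $\{1,-1,2\}$ up to permutation and scaling.

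Next I would bring in Vosper's Theorem~\ref{thm:vos} and Hamidoune--R{\o}dseth's Theorem~\ref{thm:hr}. Since~(\ref{eq:basic}) pins the sumset cardinality to exactly $|X|+|Y|-1$ or $|X|+|Y|$, one of these two inverse theorems applies to the relevant sumset $a_iX+a_jY$ (the hypothesis $|B|\geq 3$ is precisely what makes Theorem~\ref{thm:hr} usable, and the size constraints will need to be checked against the edge cases where $|B|+|C|$ is small). From these we learn that $B$ and $C$ (and their dilates) are genuine arithmetic progressions or almost arithmetic progressions with a shared common difference. The crucial leverage then comes from Lemma~\ref{lem:technical}: if $B$ (hence $tB$ for the relevant dilation ratio $t=a_ia_j^{-1}$) is simultaneously a union of at most two progressions with the same difference, then $t\in\{0,\pm1,\pm2,\pm2^{-1}\}$. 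Applying this to the three ratios $a_1a_2^{-1}$, $a_1a_3^{-1}$, $a_2a_3^{-1}$ forces each pairwise ratio of coefficients into the finite set $\{\pm1,\pm2,\pm2^{-1}\}$, drastically limiting the possible equations.

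Once the coefficient ratios are confined to $\{\pm1,\pm2,\pm2^{-1}\}$, the final step is a finite case analysis on the multiset $\{a_1,a_2,a_3\}$ up to the symmetry afforded by dilation and by permuting variables. Because the equation has some $a_i\neq a_j$, the surviving possibilities normalize (after dilating so one coefficient is $1$) to either the form $x+y+cz=b$ with $c\neq1$, or the form $x-y+2z=b$, or the form $x-y+2^{-1}z=b$. But these are exactly the equations dispatched by Proposition~\ref{prop:1} and Proposition~\ref{prop:2}, which assert that no rainbow--free coloring with $|A|=2$ exists for them. This contradicts our standing assumption and completes the proof.

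I expect the main obstacle to be the bookkeeping in the case analysis and, more subtly, handling the boundary situations where the inverse sumset theorems do not directly apply---namely when $|B|+|C|$ is too small for Theorem~\ref{thm:hr} (which requires $7\leq|a_iX+a_jY|\leq p-4$) or when Lemma~\ref{lem:technical}'s hypothesis $5\leq|X|\leq p-5$ fails. These small-cardinality exceptions, where $|B|$ or the relevant sumset is tiny, will likely require a separate direct argument exhibiting a rainbow solution by hand, rather than through the structural theorems; making sure every such degenerate configuration is covered is where the real care is needed. The reduction to the three special equations, by contrast, should be comparatively mechanical once the ratio constraints are in place.
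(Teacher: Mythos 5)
Your overall strategy coincides with the paper's: use Lemma~\ref{lem:useful} to make the relevant dilates of the color classes unions of at most two arithmetic progressions with a common difference, then use Lemma~\ref{lem:technical} to force every coefficient ratio into $\{\pm 1,\pm 2,\pm 2^{-1}\}$, and finish with a finite case analysis that reduces everything to Propositions~\ref{prop:1} and~\ref{prop:2}. However, there is a genuine gap at the key step. You apply Lemma~\ref{lem:technical} to $B$ (and its dilates $tB$), but that lemma requires $5\leq |X|\leq p-5$, and under the hypotheses of Lemma~\ref{lem:A2B3} the class $B$ may have only $3$ or $4$ elements. You notice this yourself, but you only defer it to ``a separate direct argument'' for the degenerate configurations, which you never supply; as written, your derivation of the constraint $\{a_1a_2^{-1},a_2a_3^{-1},a_3a_1^{-1}\}\subseteq \{\pm 1,\pm 2,\pm 2^{-1}\}$ is incomplete exactly when $|B|\in\{3,4\}$. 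The paper closes this hole with no extra work by applying Lemma~\ref{lem:technical} to the \emph{largest} class $C$ instead: since $p=2+|B|+|C|$ with $|B|\leq |C|$ and $p\geq 11$, one always has $5\leq |C|\leq p-2-|B|\leq p-5$, so the hypothesis is automatic, and $a_1C=a_1a_2^{-1}(a_2C)$ (and its analogues) then yields the ratio constraint in all cases.

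A second, lesser point: your plan to invoke Vosper's Theorem~\ref{thm:vos} and Hamidoune--R{\o}dseth's Theorem~\ref{thm:hr} is both unnecessary and largely inapplicable in this case. Theorem~\ref{thm:hr} requires both summands to have at least $3$ elements, so it cannot be used for any sumset involving $A$; and for the pair $(B,C)$ one has $|B|+|C|=p-2>p-4$, which is outside its range --- the obstruction is that this sumset is too \emph{large}, not ``too small'' as you suggest. None of that machinery is needed here: Lemma~\ref{lem:useful} already follows from Cauchy--Davenport together with the elementary observation that a set $S$ with $|S\cup (S+d)|\leq |S|+2$ is a union of at most two arithmetic progressions with difference $d$, and that is all the structure the argument uses. (Vosper does enter the paper's treatment of $|A|=2$, but only in the separate lemma handling $|A|=|B|=2$, and Theorem~\ref{thm:hr} only in the case $|A|=3$.) Your concluding case analysis matches the paper's.
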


\begin{proof} 
Suppose for a contradiction that  $\Z_p=A\cup B \cup C$ is a rainbow--free coloring for Equation~(\ref{eq:general2}) with $|A|=2$ and $3\leq |B|\leq |C|$. Then $5\leq |C| \leq p-5$ and, by Lemma~\ref{lem:useful}  both $a_1C$ and $a_2C$ are union of at most two arithmetic progressions with the same common difference. From   Lemma~\ref{lem:technical},  since $a_1C=a_1a_2^{-1}(a_2C)$, we conclude that $a_1a_2^{-1}\in \{\pm 1,\pm 2, \pm 2^{-1} \}$. With  similar arguments we obtain that:

\begin{equation}\label{eq:restrictive}
\{a_1a_2^{-1},a_2a_3^{-1}, a_3a_1^{-1}\}\subseteq \{\pm 1,\pm 2, \pm 2^{-1} \}. 
\end{equation}

If  $a_i=a_j$  for some distinct $i,j\in\{1,2,3\}$, we get a contradiction by Proposition~\ref{prop:1}. Assume then, with out lost of generality,  that $a_1=1$ and all three coefficients are different from each other.  Hence, by (\ref{eq:restrictive}) we have $a_2,a_3\in\{-1,\pm 2, \pm 2^{-1}\}$. If $a_2=-1$ then $a_3\in\{\pm 2, \pm 2^{-1}\}$. Note that $a_3=2$ gives an equivalent equation than $a_3=-2$, and the same is true for $a_3=2^{-1}$ or $a_3=-2^{-1}$, thus we obtain either $x-y+2z=b$  or $x-y+2^{-1}z=b$; in both cases we obtain a contradiction by Proposition~\ref{prop:2}. The remaining cases where $a_1=1$ and $a_2,a_3\in\{\pm 2,\pm 2^{-1}\}$ all give an equation equivalent to one of the considered in  Propositions~\ref{prop:1} and~\ref{prop:2}.
\end{proof}

\begin{lemma}\label{lem:A2B2}
Every $3$--coloring $\Z_p=A\cup B \cup C$ with $|A|=|B|=2$ and $|B|\leq |C|$ contains a rainbow solution of Equation~(\ref{eq:general2}). 
\end{lemma}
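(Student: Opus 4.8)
The plan is to mirror the strategy of Lemma~\ref{lem:A2B3}, but now in the regime where both of the two smaller color classes have exactly two elements. Assuming for contradiction that $\Z_p=A\cup B\cup C$ is rainbow--free with $|A|=|B|=2\leq|C|$, I would first apply Lemma~\ref{lem:useful} to extract strong structural restrictions on the coefficients. Here $|C|=p-4$, so $C$ is large, and its complement $A\cup B$ has only four elements; thus for each $\{i,j,k\}=\{1,2,3\}$ the sets $a_iC$ and $a_jC$ are each the union of at most two arithmetic progressions with a common difference. As in the proof of Lemma~\ref{lem:A2B3}, applying Lemma~\ref{lem:technical} to the pairs $(a_iC,a_jC)$ (using $a_iC=a_ia_j^{-1}(a_jC)$) should again force
\[
\{a_1a_2^{-1},\,a_2a_3^{-1},\,a_3a_1^{-1}\}\subseteq\{\pm1,\pm2,\pm2^{-1}\},
\]
so that up to normalizing $a_1=1$ the equation reduces to one of finitely many shapes. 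The cases with some $a_i=a_j$ are killed by Proposition~\ref{prop:1}, and the cases $x-y+2z=b$ and $x-y+2^{-1}z=b$ by Proposition~\ref{prop:2}, exactly as before.

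The genuinely new difficulty is that Lemma~\ref{lem:technical} requires $5\le|X|\le p-5$, and now the only set guaranteed to be large is $C$; the sets $A$ and $B$ have size $2$ and are too small for Vosper's or the Hamidoune--R{\o}dseth theorem to say anything directly about them. So the first thing I would check is whether the coefficient restriction can still be derived purely from the large class $C$ together with its small complement. Since $|C|=p-4\ge5$ once $p\ge9$, and $p-5\ge|C|$ fails, I would instead work with the complements: $a_iC$ and $a_jC$ being unions of at most two progressions is equivalent, after dilation, to the complement $a_i(A\cup B)$ being ``close'' to an interval structure, and Lemma~\ref{lem:technical} applied to $\Z_p\setminus a_iC$ (which has size $4$) is not directly usable either. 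The cleanest route is probably to apply Lemma~\ref{lem:useful} and Lemma~\ref{lem:technical} to $C$ while tracking that $|C|$ lies in the admissible range for all but finitely many small primes, handling those small primes $p\in\{5,7\}$ (where $p-4<5$) by a direct finite check or by noting $|A|=|B|=2$ forces $|C|\le3<|B|$, a contradiction with $|B|\le|C|$.

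Once the coefficients are pinned down to the list above, the remaining work is to rule out each surviving normalized equation not already covered by Propositions~\ref{prop:1} and~\ref{prop:2}. I expect these leftover cases to be exactly the symmetric dilation--equivalent forms (for instance $x+2y+2^{-1}z=b$ and its relatives), and I would argue that each is equivalent, via a dilation of the whole equation by a suitable scalar and a permutation of the roles of the variables, to one of $x+y+cz=b$, $x-y+2z=b$, or $x-y+2^{-1}z=b$. The main obstacle, as noted, is the smallness of $A$ and $B$: the additive--combinatorial machinery is calibrated for sets of size at least $3$, so the derivation of the coefficient constraint must be carried entirely on the back of the single large class $C$, and I anticipate that getting \eqref{eq:restrictive} in this degenerate regime—together with the honest verification that no new coefficient tuple escapes the reduction—will be where the real care is required.
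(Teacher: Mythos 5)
Your plan stalls at its central step, and the patch you propose for it is backwards. The derivation of (\ref{eq:restrictive}) in Lemma~\ref{lem:A2B3} needs Lemma~\ref{lem:technical}, whose hypothesis is $5\leq |X|\leq p-5$; but with $|A|=|B|=2$ you have $|C|=p-4>p-5$, so the hypothesis fails for \emph{every} prime $p$, not just for finitely many small ones. There is nothing to ``track'' and no finite check that can restore it. Moreover this is not a removable technicality in the lemma's hypotheses: the conclusion of Lemma~\ref{lem:technical} is genuinely false for sets whose complement has four elements. For any $t\in\Z_p^{\ast}$ and $d\neq 0$, take $Y=\{0,d\}\cup\{t^{-1}d,\,t^{-1}d+d\}$; then $tY=\{0,d\}\cup\{td,\,td+d\}$, so both $Y$ and $tY$ are unions of at most two $d$--progressions, and hence so are $X=\Z_p\setminus Y$ and $tX=\Z_p\setminus tY$. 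Thus the structural information that Lemma~\ref{lem:useful} gives you about $C$ is compatible with \emph{every} value of $a_ia_j^{-1}$, and no coefficient restriction of the form $a_ia_j^{-1}\in\{\pm1,\pm2,\pm2^{-1}\}$ can be extracted from $C$ alone. (Your small-prime fallback is also off: for $p=7$ one gets $|C|=3\geq |B|$, so no contradiction with $|B|\leq|C|$ arises there.)

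The missing idea, which is how the paper actually argues, is to ignore the structure of $C$ entirely and exploit the two small classes directly. By (\ref{eq:basic}), $3\leq |a_iA+a_jB|\leq 4$, and both $a_1A+a_2B$ and $a_2A+a_1B$ lie in the four-element set $\Z_p\setminus(-a_3C+b)$. If one of these sumsets has size $3$, Vosper's Theorem~\ref{thm:vos} (which only needs $|X|,|Y|\geq 2$, so it applies to your two-element sets) makes $a_1A$ and $a_2B$ arithmetic progressions with a common difference; writing $a_2A=a_2a_1^{-1}(a_1A)$ and $a_1B=a_1a_2^{-1}(a_2B)$ and comparing the difference sets of $a_1A+a_2B$ and $a_2A+a_1B$ inside that four-element set forces either $a_1=a_2$ (excluded by Proposition~\ref{prop:1}) or $A=B$ (absurd). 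The same difference-set comparison settles the case where all the sumsets $|a_iA+a_jB|$ have size $4$. So the real content of the $|A|=|B|=2$ case is a bare-hands Vosper/difference-set argument on $A$ and $B$, not a reduction to the normalized equations of Propositions~\ref{prop:1} and~\ref{prop:2}.
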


\begin{proof} 
Suppose for a contradiction that  $\Z_p=A\cup B \cup C$ is a rainbow--free coloring for Equation~(\ref{eq:general2}), with $|A|=|B|=2$. By (\ref{eq:2b}) we get $3\leq |a_iA+a_jB| \leq 4$. 

Assume first that for some pair of coefficients, say $a_1$ and $a_2$, we have  $|a_1A+a_2B|=3$. Then Vosper's Theorem (Theorem~\ref{thm:vos}) establishes that the sets $a_1A$ and $a_2B$ are arithmetic progressions with same common difference. Let $a_1A=\{t_1,t_1+d\}$ and $a_2B=\{t_2,t_2+d\}$. Consider now the set $a_2A+a_1B$, which can be written as:

\begin{equation}\label{eq:2+2}
\{a_2a_1^{-1}t_1,a_2a_1^{-1}(t_1+d)\}+\{a_1a_2^{-1}t_2,a_1a_2^{-1}(t_2+d)\}
\end{equation}
since $a_2A=a_2a_1^{-1}(a_1A)$ and $a_1B=a_1a_2^{-1}(a_2B)$. By  (\ref{eq:cont}) both $a_1A+a_2B$ and $a_2A+a_1B$ are contained in $ \Z_p\setminus (-a_3C+b)$ which is a four elements set. If $|a_2A+a_1B|=3$ then Vosper Theorem establishes $a_2a_1^{-1}\in\{\pm a_1a_2^{-1}\}$ and it follows plainly that $a_2a_1^{-1}=1$ providing a contradiction by Proposition\ref{prop:1}. Assume $|a_2A+a_1B|=4$.
Then $a_1A+a_2B$ is contained in $a_2A+a_1B$. Since $a_1A+a_2B$ is a three--term arithmetic progression with difference $d$, by  analyzing the set of differences in (\ref{eq:2+2}) we get that either $a_2a_1^{-1}d=d$ or $a_1a_2^{-1}d=d$. In both cases  $a_1=a_2$, which is a contradiction by Proposition~\ref{prop:1}.

Assume now that $|a_iA+a_jB|=4$ for all distinct $i,j\in\{1,2,3\}$. Let $a_1A=\{t_1,t_1+d_1\}$ and $a_2B=\{t_2,t_2+d_2\}$ with $d_1\neq \pm d_2$. As in the previous paragraph, we note that $a_2A+a_1B$ can be written as: 

\begin{equation}\label{eq:2+2,2}
\{a_2a_1^{-1}t_1,a_2a_1^{-1}(t_1+d_1)\}+\{a_1a_2^{-1}t_2,a_1a_2^{-1}(t_2+d_2)\}
\end{equation}

Again, by comparing the set of differences in (\ref{eq:2+2,2}) with the set of differences in $a_1A+a_2B$ we deduce that, either $d_1=a_2a_1^{-1}d_1$, or $d_1=a_1a_2^{-1}d_2$. In the first case we get a contradiction by  Proposition~\ref{prop:1}. In the second case, 
$(a_1A+a_2B\cup a_2A+a_1B)\subseteq \Z_p\setminus (-a_3C+b)$ implies $a_1A+a_2B=a_2A+a_1B$ hence $t_1+t_2=a_2a_1^{-1}t_1+a_1a_2^{-1}t_2$ thus $A=B$ which is impossible. 
\end{proof}


\section{The case $|A|=3$}\label{sec:A=3}

In this section we prove that there are no rainbow-free colorings of $\Z_p$ concerning Equation~(\ref{eq:general2}) such that the smallest  color class has three elements. In the case $|A|=3$ and $4\leq |B|\leq |C|$, we will follow a similar line of argument than in the previous section. For the case $|A|=|B|=3$, we use some other technical lemmas. First let us note a useful fact. 

\begin{observation}\label{rem:useful2}
If $p\geq 11$, $|X|=3$ and $X$ is an almost arithmetic progression of difference $d$, then $X$ is an almost arithmetic progression of difference $d'$ if and only if $d\in\{\pm d'\}$
\end{observation}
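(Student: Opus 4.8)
The plan is to attach to $X$ a quantity that is insensitive to translation but sensitive to the common difference, namely the \emph{difference set} $D(X):=\{u-v:\ u,v\in X,\ u\neq v\}$. Since $D(X+t)=D(X)$ for every $t$, the value of $D(X)$ depends only on the ``shape'' of $X$, so comparing the shape read off from difference $d$ with the shape read off from difference $d'$ should pin down $d$ up to sign.

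The ``if'' direction is immediate: a four-term arithmetic progression of difference $d$ is, read backwards, a four-term arithmetic progression of difference $-d$, so deleting a term exhibits $X$ as an almost arithmetic progression of difference $-d$ as well. Hence $X$ is an almost arithmetic progression of difference $d$ exactly when it is one of difference $-d$, which covers all $d'\in\{\pm d\}$.

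For the ``only if'' direction I would first classify three-element almost arithmetic progressions up to translation. Deleting one term from $\{0,d,2d,3d\}$ gives, after translating, precisely one of $d\{0,1,2\}$, $d\{0,1,3\}$, or $d\{0,2,3\}$. A one-line computation then yields $D(X)=\{\pm d,\pm 2d\}$ in the first case and $D(X)=\{\pm d,\pm 2d,\pm 3d\}$ in the other two, and for $p\ge 7$ these sets have exactly $4$ and $6$ elements. Thus $|D(X)|\in\{4,6\}$ records whether $X$ is a genuine $3$-term progression or one of the two ``gapped'' shapes, independently of which difference is used to exhibit it.

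Now assume $X$ is an almost arithmetic progression of difference $d$ and also of difference $d'$. Computing $D(X)$ from each description and comparing cardinalities forces the two descriptions to be of the same type, whence $d'\in D(X)$ and $2d'\in D(X)$ (and moreover $3d'\in D(X)$ in the six-element type); so $d'=\lambda d$ for an integer $\lambda$ with $|\lambda|\le 2$ (respectively $|\lambda|\le 3$). The crux, and the step I expect to need the most care, is excluding $\lambda\neq\pm1$: each such $\lambda$ makes one of $2d'$ or $3d'$ land in $D(X)$ only at the cost of a relation $md=0$ with $m$ a nonzero integer of absolute value at most $7$ (the extreme instance being $4d=-3d$, i.e.\ $7d=0$). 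As $d\neq0$ and $p\ge 11>7$, every such relation is impossible, so $\lambda=\pm1$ and $d'\in\{\pm d\}$. This also shows $p\ge 11$ is sharp: at $p=7$ the subset $\{0,1,3\}$ of $\Z_7$ is simultaneously an almost arithmetic progression of difference $1$ (delete $2$ from $\{0,1,2,3\}$) and of difference $2$ (it is the translate by $1$ of $\{0,2,6\}=\{0,2,4,6\}\setminus\{4\}$), while $2\notin\{\pm1\}$.
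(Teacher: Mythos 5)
The paper states this observation without any proof at all, so there is no argument of the authors' to compare yours against; your proposal supplies the missing verification, and its strategy is sound. The translation-invariant difference set $D(X)=\{u-v: u,v\in X,\ u\neq v\}$ is exactly the right tool: for $p\geq 7$ its cardinality ($4$ for the genuine progression $d\{0,1,2\}$, $6$ for the gapped shapes $d\{0,1,3\}$ and $d\{0,2,3\}$) forces the two descriptions of $X$ to be of the same type, and then $d'\in D(X)$ gives $d'=\lambda d$ with $|\lambda|\leq 2$, respectively $|\lambda|\leq 3$. One numerical claim needs correcting, and it is precisely the one on which the hypothesis $p\geq 11$ hinges: the extreme relation is not $4d=-3d$ (i.e.\ $7d=0$). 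In the six-element type with $\lambda=\pm 3$, the test $2d'=\pm 6d\in\{\pm d,\pm 2d,\pm 3d\}$ yields relations $md=0$ with $|m|$ up to $9$ (from $6d=-3d$), so the right bound is $|m|\leq 9$, and the hypothesis enters as $p\geq 11>9$. The conclusion is unaffected, but the choice you hedge with ``one of $2d'$ or $3d'$'' genuinely matters: had you tested $3d'=\pm 9d\in D(X)$ when $\lambda=\pm 3$, one of the resulting relations is $9d=-2d$, i.e.\ $11d=0$, which actually \emph{holds} in $\Z_{11}$, so that route gives no contradiction at $p=11$; the argument must use $2d'$ there, as yours can. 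With that constant corrected the proof is complete, and your $\Z_7$ example $\{0,1,3\}$ (an almost arithmetic progression of difference $1$ and of difference $2$) correctly shows the hypothesis cannot be weakened, since for primes $p\geq 11$ is equivalent to $p>7$.
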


\begin{lemma}\label{lem:useful3}
Let $\Z_p=A\cup B \cup C$  be a rainbow--free coloring with $|A|=3$ and $4\leq |B|\leq |C|$.
For any choice of $i, j \in\{1,2,3\}$, $i\neq j$, the sets  $a_iB,a_iC,a_jB,a_jC$ are almost arithmetic progression with the same common difference.
\end{lemma}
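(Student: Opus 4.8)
The plan is to follow the strategy of Lemma~\ref{lem:useful} from the case $|A|=2$, replacing Vosper's theorem by the Hamidoune--R{\o}dseth theorem wherever the relevant sumset is one element larger, and routing every estimate through the single fixed set $a_kA$ (with $k$ the index distinct from $i,j$), so that the difference of $a_kA$ controls all four sets at once.

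First I would fix $\{i,j,k\}=\{1,2,3\}$ and record the sizes. Since $|A|=3$ and $4\le|B|\le|C|$, we have $p=|A|+|B|+|C|\ge 11$, and in each split $\{X,Y\}=\{B,C\}$ the remaining set satisfies $|Y|\ge 4$. For every $\ell\in\{i,j\}$ and every such split, rainbow--freeness with~(\ref{eq:cont}) gives $a_kA+a_\ell X\subseteq \Z_p\setminus(-a_mY+b)$, where $\{k,\ell,m\}=\{1,2,3\}$; hence by~(\ref{eq:basic}),
$$|A|+|X|-1\le |a_kA+a_\ell X|\le |A|+|X|=p-|Y|\le p-4.$$
Thus each of the four sumsets $a_kA+a_iB$, $a_kA+a_jB$, $a_kA+a_iC$, $a_kA+a_jC$ has size either $|A|+|X|-1$ or $|A|+|X|$, and never exceeds $p-4$.

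Then I would split into two cases for each such sumset. If $|a_kA+a_\ell X|=|A|+|X|-1$, then since this is at most $p-4\le p-2$ and $|a_kA|,|a_\ell X|\ge 2$, Vosper's theorem (Theorem~\ref{thm:vos}) gives that $a_kA$ and $a_\ell X$ are arithmetic progressions with a common difference. If $|a_kA+a_\ell X|=|A|+|X|$, then $|a_kA|=3$, $|a_\ell X|\ge 4$, and $7\le |A|+|X|\le p-4$ (the lower bound because $|X|\ge 4$), so the Hamidoune--R{\o}dseth theorem (Theorem~\ref{thm:hr}) gives that $a_kA$ and $a_\ell X$ are almost arithmetic progressions with a common difference. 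In either case $a_\ell X$ is an almost arithmetic progression sharing the difference of the fixed $3$--element set $a_kA$. The decisive point is that $a_kA$ is the \emph{same} set in all four estimates: since $|a_kA|=3$ and $p\ge 11$, Observation~\ref{rem:useful2} says its difference as an almost arithmetic progression is determined up to sign, say $d$. Each of $a_iB,a_jB,a_iC,a_jC$ then has difference $\pm d$, and as an almost arithmetic progression of difference $-d$ is also one of difference $d$, all four share the common difference $d$.

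The step I expect to be the main obstacle is exactly this reconciliation of differences. Had one instead analyzed the mixed sumsets $a_iA+a_jB$ (using a different dilation of $A$ for each set), the resulting differences would be scaled by the factors $a_\ell a_m^{-1}$ and could be matched only under extra arithmetic relations among the coefficients. Anchoring every estimate to the single set $a_kA$, whose difference is pinned down up to sign by Observation~\ref{rem:useful2}, is what forces a genuinely common difference; the rest is the routine check that the size hypotheses of Theorems~\ref{thm:vos} and~\ref{thm:hr} hold, which they do because $|B|,|C|\ge 4$ and hence $p\ge 11$.
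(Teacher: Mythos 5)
Your proof is correct and follows essentially the same route as the paper's: the dichotomy from (\ref{eq:basic}), Vosper's Theorem (Theorem~\ref{thm:vos}) in the tight case, the Hamidoune--R{\o}dseth Theorem (Theorem~\ref{thm:hr}) in the other, and Observation~\ref{rem:useful2} to pin down the difference of the three-element dilation of $A$ up to sign. The only difference is presentational: the paper pairs $a_iA$ with $a_jX$ and concludes ``by repeating this argument,'' leaving the choice of anchor implicit, whereas you route all four sumsets through the single set $a_kA$ --- which is exactly the reading that makes the reconciliation of the differences immediate, since anchoring at different dilations of $A$ would introduce the scaling factors you point out.
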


\begin{proof}
 By (\ref{eq:basic}) we know that for any choice of $\{i,j,k\}=\{1,2,3\}$, and $\{X,Y,Z\}=\{A,B,C\}$,  either $|a_iX+a_jY|=|a_iX|+|a_jY|-1$, or $|a_iX+a_jY|=|a_iX|+|a_jY|$. In the first case it follows from Vosper's Theorem  that both $a_iX$ and $a_jY$ are arithmetic progressions with the same common difference; in the second case, Theorem~\ref{thm:hr} implies that both $a_iX$ and $a_jY$ are almost arithmetic progressions with the same common difference. In both cases we obtain that  the sets $a_iA$ and $a_jX$, with $X\in\{B,C\}$, are almost arithmetic progressions with the same common difference. 
By repeating this argument and the use of the previous observation we conclude the claim.
\end{proof}

As in the previous section we first handle specific cases  that will arise from the lemmas below. 

\begin{proposition}\label{prop:1,3}
Every $3$--coloring $\Z_p=A\cup B \cup C$ with $|A|=3$ and $4\leq|B|\leq|C|$ contains a rainbow solution of $x+y+cz=b$ where $c\neq 1$.
\end{proposition}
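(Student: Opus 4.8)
The plan is to mirror the proof of Proposition~\ref{prop:1}, splitting on whether $c=-2$. I write the equation as $x+y+cz=b$, so that $a_1=a_2=1$ and $a_3=c$, and record that $a_1+a_2+a_3=2+c$.

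First I would dispose of the case $c\neq -2$. Then $2+c\neq 0$, so Lemma~\ref{lem:b} reduces the problem to the equation $x+y+cz=0$, i.e.\ $x+y=-cz$, without changing the sizes of the color classes. Theorem~\ref{thm:llm} classifies the rainbow--free colorings of $x+y=-cz$: those whose smallest color class has size at least two occur only in its third item, which forces the coefficient $-c$ to equal $-1$, that is $c=1$. Since we assume $c\neq 1$, no rainbow--free coloring with $|A|\geq 2$ exists; in particular a coloring with $|A|=3$ must contain a rainbow solution.

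It remains to treat $c=-2$, the equation $x+y-2z=b$. Here I would argue by contradiction: suppose $\Z_p=A\cup B\cup C$ with $|A|=3$ and $4\leq|B|\leq|C|$ is rainbow--free. Applying Lemma~\ref{lem:useful3} with the indices $i=1$ (coefficient $a_1=1$) and $j=3$ (coefficient $a_3=-2$) shows that $B$ and $-2B$ are both almost arithmetic progressions with one and the same common difference $\delta$. Since $B=-2^{-1}(-2B)$, dilating the description of $-2B$ by the factor $-2^{-1}$ then exhibits $B$ also as an almost arithmetic progression with difference $-2^{-1}\delta$.

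The crux is a rigidity statement: for a set of size at least four, the common difference of an almost arithmetic progression is determined up to sign---the analogue for size three being Observation~\ref{rem:useful2}. Granting this, $-2^{-1}\delta\in\{\pm\delta\}$; since $p\geq|A|+|B|+|C|\geq 11$ one checks $-2^{-1}\notin\{1,-1\}$ (indeed $-2^{-1}=1$ forces $p=3$ and $-2^{-1}=-1$ is impossible for any prime), so $\delta=0$, contradicting $|B|\geq 4$. I expect this rigidity to be the main technical point, in particular ruling out exotic coincidences when the almost progression sits near the boundary sizes permitted by the additive tools of Section~\ref{sec:ant}; by contrast the reduction for $c\neq -2$ is immediate from the cited results.
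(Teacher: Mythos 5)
Your route coincides with the paper's. For $c\neq -2$ the paper argues exactly as you do, via Lemma~\ref{lem:b} and Theorem~\ref{thm:llm} (and your reading is right: items 1 and 2 of that theorem have a singleton class, and item 3 forces the coefficient to be $-1$, i.e.\ $c=1$, which is excluded). For $c=-2$ the paper likewise applies Lemma~\ref{lem:useful3} to conclude that $B$ and $-2B$ are almost arithmetic progressions with one and the same common difference, and then derives a contradiction. The one genuine gap is precisely the step you flag and then grant: the ``rigidity'' claim that an almost arithmetic progression of size at least four determines its common difference up to sign. As literally stated this is false without an upper bound on the size: $\Z_p\setminus\{x\}$ is $\Z_p$ (an arithmetic progression with \emph{any} difference $d\neq 0$) minus one term, hence an almost arithmetic progression with every nonzero difference. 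So the claim needs a hypothesis of the type $|X|\leq (p-3)/2$; fortunately your $B$ satisfies it, since $p=3+|B|+|C|\geq 3+2|B|$.

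The instance you actually need is easy to close, and closing it by hand is exactly what the paper does instead of invoking a general lemma: it normalizes the common difference to $1$, writes $B=\{t+i\}_{i=1}^{i=j-1}\cup\{t+i\}_{i=j+1}^{i=k}$ explicitly, and observes that $-2B$ is then an almost arithmetic progression of difference $-2$, which cannot also be one of difference $1$ (the paper's ``clearly''). To justify that last point under $p\geq 2|B|+3$: if $-2B$ were an interval minus at most one interior point with $|{-2B}|\geq 4$, it would contain a pair $\{x,x+1\}$; but every difference of two elements of $-2B$ has the form $-2m$ with $1\leq |m|\leq |B|$, so $1\equiv -2m \pmod p$ would force $p\mid 2m+1$ with $0<|2m+1|\leq 2|B|+1<p$, which is impossible. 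With this (correctly bounded) rigidity supplied, your argument is complete; note that Observation~\ref{rem:useful2}, which you cite as the size-three analogue, is stated in the paper only for $|X|=3$, so the size-$\geq 4$ statement is indeed yours to prove, and the computation above is the proof. Your concluding step is also slightly overdressed: from $-2^{-1}\delta\in\{\pm\delta\}$ and $\delta\neq 0$ (any almost arithmetic progression with at least two elements has nonzero difference) one gets $-2^{-1}\in\{\pm 1\}$ directly, which fails for $p>3$.
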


\begin{proof}
By Lemma~\ref{lem:b} and Theorem~\ref{thm:llm} the statement is true for $c\neq -2$. If $c=-2$ and we assume there are not rainbow solutions, then Lemma~\ref{lem:useful3} states that $B$, and $-2B$ are almost arithmetic progression with difference $d$. For the sake of simplicity assume $d=1$. Hence $B=\{t+i\}_{i=1}^{i=j-1}\cup \{t+i\}_{i=j+1}^{i=k}$ for some $t\in\Z_p$, $4\leq k\leq p-6$ and $1<j\leq k$ and thereby $-2B=\{-2t-2i\}_{i=1}^{i=j-1}\cup \{-2t-2i\}_{i=j+1}^{i=k}$ which clearly is not an almost arithmetic progression of difference $1$ contradicting the above assumption.
\end{proof}

\begin{proposition}\label{prop:1,3bb}
Every $3$--coloring $\Z_p=A\cup B \cup C$ with $|A|=3$ and $4\leq|B|\leq|C|$ contains a rainbow solution of $x-y+2z=b$ (respectively, $x-y+2^{-1}z=b$).
\end{proposition}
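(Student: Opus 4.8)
Looking at this, I need to prove Proposition~\ref{prop:1,3bb}, which handles the equations $x-y+2z=b$ and $x-y+2^{-1}z=b$ in the case $|A|=3$, $4\le|B|\le|C|$. This is the analogue for $|A|=3$ of Proposition~\ref{prop:2} (which did the same for $|A|=2$), and it follows Proposition~\ref{prop:1,3} in exactly the way Proposition~\ref{prop:2} followed Proposition~\ref{prop:1}. Let me sketch my approach.

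\textbf{The plan.} The strategy is to mimic the proof of Proposition~\ref{prop:1,3}, replacing the structural input coming from coefficient $-2$ by the structural input coming from the dilation factors arising in these two equations. Suppose for contradiction that $\Z_p=A\cup B\cup C$ is a rainbow--free coloring for $x-y+2z=b$ with $|A|=3$ and $4\le|B|\le|C|$. Here the coefficient triple is $(a_1,a_2,a_3)=(1,-1,2)$. By Lemma~\ref{lem:useful3}, for any pair $i\ne j$ the sets $a_iB,a_iC,a_jB,a_jC$ are almost arithmetic progressions with the same common difference; applying this with the relevant pairs forces $B$, $-B$ and $2B$ (equivalently, after normalizing, $B$ and $2B$) all to be almost arithmetic progressions with a common difference $d$. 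As in Proposition~\ref{prop:1,3}, I would normalize $d=1$ (otherwise pass to $d^{-1}B$), so that $B$ is an almost arithmetic progression of difference $1$, i.e. an interval with at most one interior point deleted.

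\textbf{The key step.} I would then write $B=\{t+i\}_{i=1}^{j-1}\cup\{t+i\}_{i=j+1}^{k}$ for suitable $t$, with $4\le k\le p-6$ and $1<j\le k$, exactly as in the previous proof. The crux is that $2B$ is obtained by doubling, so $2B=\{2t+2i\}_{i=1}^{j-1}\cup\{2t+2i\}_{i=j+1}^{k}$ has consecutive-element gaps equal to $2$ rather than $1$; such a set cannot be an almost arithmetic progression of difference $1$ once $B$ has enough elements (note $|B|=k-1\ge4$ rules out the tiny degenerate cases where doubling might accidentally reorganize into few intervals). The role played by $-2$ in Proposition~\ref{prop:1,3} is now played by $2$, and the same ``doubling destroys difference-one structure'' obstruction applies verbatim. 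The symmetric equation $x-y+2^{-1}z=b$ is handled identically, with the dilation factor $2^{-1}$ instead of $2$; equivalently one observes that after a suitable dilation the two equations are interchanged, so a single argument covers both cases.

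\textbf{The main obstacle.} The delicate point is making sure the dilation factor that must preserve almost-arithmetic-progression structure is genuinely $\pm2$ (resp. $\pm2^{-1}$) and not something that trivially preserves difference-$1$ structure. This requires invoking Observation~\ref{rem:useful2} and Lemma~\ref{lem:useful3} carefully to pin down which dilations relate $B$ to its images, and then checking the small/boundary values of $j$ and $k$ (where an almost arithmetic progression could coincide with a genuine arithmetic progression) do not provide an escape. Because $|B|\ge4$ forces $5\le|C|$ and keeps us away from the degenerate regime, I expect these boundary checks to be routine rather than substantive, so the proof should close cleanly by the same contradiction as in Proposition~\ref{prop:1,3}.
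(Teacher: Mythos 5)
Your proposal follows essentially the same route as the paper's proof: invoke Lemma~\ref{lem:useful3} to conclude that $B$ and $2B$ (respectively $B$ and $2^{-1}B$) are almost arithmetic progressions with the same common difference, normalize $d=1$, and derive a contradiction because dilation by $2$ (resp. $2^{-1}$) destroys the difference-one structure once $|B|\geq 4$, exactly as in Proposition~\ref{prop:1,3}. One caveat: your parenthetical claim that a suitable dilation interchanges the two equations is false (the coefficient multisets $\{1,-1,2\}$ and $\{1,-1,2^{-1}\}$ are not scalar multiples of one another for $p>5$), but this does not affect the proof, since your primary argument, like the paper's, treats the second equation by the identical argument with $2^{-1}$ in place of $2$ rather than by reduction to the first.
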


\begin{proof}
The proof is analogous to the proof of the previous proposition. Concerning the equation $x-y+2z=b$ (respectively, $x-y+2^{-1}z=b$) by Lemma~\ref{lem:useful3} we know $B$ and  $2B$   (respectively,  $B$ and $2^{-1}B$) are almost arithmetic progressions with the same common difference.
\end{proof}
 
Now we are ready to prove the lemma who dismiss  the existence of rainbow--free colorings in the case $|A|=3<|B|$.

 \begin{lemma}\label{lem:A3B4}
Every $3$--coloring $\Z_p=A\cup B \cup C$ with $|A|=3$ and $4\leq |B|\leq |C|$ contains a rainbow solution of Equation~(\ref{eq:general2}).
\end{lemma}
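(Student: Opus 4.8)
Looking at the structure of the paper, I can see the pattern that Lemma~\ref{lem:A3B4} needs to follow. Let me plan out the proof.

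The plan is to mirror exactly the strategy used in Lemma~\ref{lem:A2B3}, which handled the analogous case $|A|=2$. The engine is the restriction that the coefficient ratios are forced to lie in a small finite set, after which we eliminate each surviving equation type using the propositions just proved. First I would suppose for contradiction that $\Z_p=A\cup B\cup C$ is rainbow-free for Equation~(\ref{eq:general2}) with $|A|=3$ and $4\le|B|\le|C|$. Then $|C|\ge 5$ and $|C|\le p-7$, so all the cardinality hypotheses of the additive tools are met. By Lemma~\ref{lem:useful3} (the $|A|=3$ analogue of Lemma~\ref{lem:useful}), for each pair $i\neq j$ the sets $a_iC$ and $a_jC$ are almost arithmetic progressions with a common difference, hence each is a union of at most two arithmetic progressions with that common difference.

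Next I would invoke Lemma~\ref{lem:technical}: since $a_1C=a_1a_2^{-1}(a_2C)$ and both $a_1C$ and $a_2C$ are unions of at most two arithmetic progressions with the same difference (and $5\le|C|\le p-5$ holds), the multiplier $a_1a_2^{-1}$ must lie in $\{0,\pm1,\pm2,\pm2^{-1}\}$; since $a_1a_2^{-1}\neq 0$, it lies in $\{\pm1,\pm2,\pm2^{-1}\}$. Running the same argument over all three pairs gives exactly the restriction
\begin{equation*}
\{a_1a_2^{-1},\,a_2a_3^{-1},\,a_3a_1^{-1}\}\subseteq\{\pm1,\pm2,\pm2^{-1}\},
\end{equation*}
word for word as in~(\ref{eq:restrictive}). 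From here the case analysis is identical to that in Lemma~\ref{lem:A2B3}: if some $a_i=a_j$ we invoke Proposition~\ref{prop:1,3} (the $|A|=3$ version of Proposition~\ref{prop:1}) to reach a contradiction; otherwise, normalizing $a_1=1$ with all coefficients distinct, the ratio constraint forces the equation, up to the equivalences $2\sim-2$ and $2^{-1}\sim-2^{-1}$ (which give equivalent equations), into one of the forms $x-y+2z=b$ or $x-y+2^{-1}z=b$, each excluded by Proposition~\ref{prop:1,3bb}, and every remaining admissible choice of $(a_2,a_3)\in\{\pm2,\pm2^{-1}\}$ reduces to an equation already treated in Propositions~\ref{prop:1,3} and~\ref{prop:1,3bb}.

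I expect the only genuinely new friction — as opposed to transcription from Lemma~\ref{lem:A2B3} — to be the bookkeeping that the hypotheses of Lemma~\ref{lem:technical} and of the Hamidoune--R\o dseth theorem are actually satisfied here. In particular one should check that $|C|\ge 5$ (which follows from $|C|\ge|B|\ge 4$) and that $|C|$ is not too large; more delicately, Theorem~\ref{thm:hr} requires $7\le|a_iX+a_jY|\le p-4$, so I would verify that the relevant sumsets fall in this window, or note that the boundary cases are absorbed by Vosper's Theorem via~(\ref{eq:basic}). Once these size conditions are confirmed, the reduction to the finite list of coefficient ratios is automatic and the three propositions close every branch. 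The main obstacle is therefore not conceptual but the careful confirmation that the almost-arithmetic-progression machinery applies across all coefficient pairs simultaneously, which is precisely what Lemma~\ref{lem:useful3} is designed to deliver.
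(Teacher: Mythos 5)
Your proposal follows essentially the same route as the paper's own proof: Lemma~\ref{lem:useful3} to make the dilated color classes almost arithmetic progressions with a common difference, Lemma~\ref{lem:technical} to force $\{a_1a_2^{-1},a_2a_3^{-1},a_3a_1^{-1}\}\subseteq\{\pm 1,\pm 2,\pm 2^{-1}\}$, and then the same case analysis closed by Propositions~\ref{prop:1,3} and~\ref{prop:1,3bb}. One small caveat: your parenthetical claim that $|C|\geq 5$ follows from $|C|\geq|B|\geq 4$ is not quite automatic (when $p=11$ one can have $|B|=|C|=4$; one needs $|C|\geq(p-3)/2$, so $p\geq 13$), but this boundary bookkeeping is glossed over in the paper's proof as well.
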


\begin{proof}
Suppose for a contradiction that  $\Z_p=A\cup B \cup C$ is a rainbow--free coloring for Equation~(\ref{eq:general2}) with $|A|=3$ and $4\leq |B|\leq |C|$. By Lemma~\ref{lem:useful3} we know that $a_2B$, $a_2C$, $a_3B$ and $a_3C$ are  almost arithmetic progressions with the same common difference. Hence, from   Lemma~\ref{lem:technical} we conclude that $a_2a_3^{-1}\in \{\pm 1,\pm 2, \pm 2^{-1} \}$; in the same way $a_1a_2^{-1}, a_3a_1^{-1}\in \{\pm 1,\pm 2, \pm 2^{-1} \}$. Note that these conditions give precisely the cases considered in Propositions~\ref{prop:1,3} and \ref{prop:1,3bb}: If  $a_i=a_j$  for some distinct $i,j\in\{1,2,3\}$, we get a contradiction by Proposition~\ref{prop:1,3}. Assume then, with out lost of generality,  that $a_1=1$ and all three coefficients are different from each other, thus $a_2,a_3\in\{-1,\pm 2, \pm 2^{-1}\}$. If $a_2=-1$ then $a_3\in\{\pm 2, \pm 2^{-1}\}$. Note that $a_3=2$ gives an equivalent equation than $a_3=-2$, and the same is true for $a_3=2^{-1}$ or $a_3=-2^{-1}$, 
in both cases we obtain a contradiction by Proposition~\ref{prop:1,3}. The remaining cases where $a_1=1$ and $a_2,a_3\in\{\pm 2,\pm 2^{-1}\}$ all give an equation equivalent to one of the considered in  Propositions~\ref{prop:1,3} and~\ref{prop:1,3bb}.

\end{proof}
 
Next we prove a technical lemmas to conclude the remaining case $|A|=|B|=3$. 
 
\begin{lemma}\label{lem:X3Y3}
 Suppose $p\geq 11$ and $X,Y\subseteq \Z_p$. If $|X|=|Y|=3$ and $|X+Y|\in\{5,6\}$, then one of the following holds true
 \begin{itemize}
\item[i)] $X=Y+u$ for some $u\in\Z_p$.
\item[ii)] $\{X,Y\}=\{\{w,w+d,w+2d\},\{u,u+d,u+3d\}\}$ for some $w,u,d\in\Z_p$.
\end{itemize}
\end{lemma}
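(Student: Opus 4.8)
The plan is to separate the two possibilities for $|X+Y|$. When $|X+Y|=5=|X|+|Y|-1$, and since $5\le p-2$ for $p\ge 11$, Vosper's Theorem (Theorem~\ref{thm:vos}) applies directly and shows that $X$ and $Y$ are three--term arithmetic progressions with a common difference $d$; writing $X=\{a,a+d,a+2d\}$ and $Y=\{a',a'+d,a'+2d\}$ gives $X=Y+(a-a')$, which is conclusion (i). The substantial case is $|X+Y|=6=|X|+|Y|$. Note that the Hamidoune--R\o dseth Theorem (Theorem~\ref{thm:hr}) is \emph{not} available here, since it requires $|X+Y|\ge 7$; this case must therefore be handled directly, which is presumably why the lemma is isolated for the small values $\{5,6\}$.

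For $|X+Y|=6$ I would write $X+Y=\bigcup_{y\in Y}(X+y)$ as the union of the three translates $T_i:=X+y_i$, each of size $3$. Inclusion--exclusion gives
\[
6=|X+Y|=9-\sum_{i<j}|T_i\cap T_j|+|T_1\cap T_2\cap T_3|,
\]
so $\sum_{i<j}|T_i\cap T_j|-|T_1\cap T_2\cap T_3|=3$. First I would rule out a common point of the three translates: if $z\in T_1\cap T_2\cap T_3$ then $z-y_1,z-y_2,z-y_3$ are three distinct elements of $X$, forcing $X=z-Y$; but then $X+Y=z+(Y-Y)$, so $|X+Y|=|Y-Y|$, which equals $5$ if $Y$ is a progression and $7$ otherwise (the six nonzero differences of a non--progression being distinct for $p\ge 11$). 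As neither value is $6$, we conclude $T_1\cap T_2\cap T_3=\emptyset$, and hence $\sum_{i<j}|T_i\cap T_j|=3$ with each term in $\{0,1,2\}$.

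The proof then splits according to whether some pairwise overlap equals $2$. A key elementary fact is that $|X\cap(X+\delta)|=2$ forces $X$ to be a three--term progression with difference $\delta$. Thus if some $|T_i\cap T_j|=2$, say with $\delta=y_j-y_i$, then $X$ is a progression; after translating and dilating so that $X=\{0,1,2\}$, a direct interval--covering analysis of $|Y+\{0,1,2\}|=6$ (valid since the relevant points lie in a window of length less than $p$) shows that $Y$ is an almost arithmetic progression $\{u,u+1,u+3\}$ or $\{u,u+2,u+3\}$ with difference $1$, giving conclusion (ii). Otherwise every pairwise overlap equals $1$; running the symmetric argument on the translates $Y+x_i$ either produces a pairwise overlap $2$ (whence $Y$ is a progression and (ii) holds with the roles reversed) or shows that every overlap there is also $1$. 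In the remaining situation each difference of $Y$ is a difference of $X$ and conversely, so $X-X=Y-Y$; reconstructing a $3$--set from its (symmetric) difference set then forces $Y=X+u$ or $Y=-X+u$. The reflection $Y=-X+u$ is impossible because it again yields $|X+Y|=|X-X|\in\{5,7\}\ne 6$, leaving $Y=X+u$, which is conclusion (i).

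The main obstacle is the last step: showing that two $3$--element sets with equal difference sets are translates or reflections of one another, and carrying out the interval--covering classification of $Y$ in the overlap--$2$ case. Both are elementary but need the hypothesis $p\ge 11$ to exclude the small--prime coincidences ($2\equiv -1$, $3\equiv 0$, non--distinct differences, wrap--around) that would otherwise create spurious configurations; I would phrase the difference--set reconstruction as a short finite check on which of the six differences $\pm a,\pm b,\pm(a+b)$ of $X$ the differences of $Y$ can equal, subject to the linear relation among them.
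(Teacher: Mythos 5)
Your proof is correct, but it takes a genuinely different route from the paper in the case that actually matters. The paper's own proof is essentially a one-liner: for $|X+Y|=5$ it invokes Vosper's Theorem (exactly as you do), and for $|X+Y|=6$ it simply declares the claim ``tedious but not difficult'' and defers to \cite{llm1}, giving no argument at all. You instead give a self-contained combinatorial proof of the $|X+Y|=6$ case: decomposing $X+Y$ into the three translates $X+y_i$, using inclusion--exclusion to show the pairwise overlaps sum to $3$ once the triple intersection is ruled out (via $|Y-Y|\in\{5,7\}\neq 6$), and then splitting on whether some overlap equals $2$ (forcing $X$ to be a progression and $Y$ an almost progression, i.e.\ conclusion (ii)) or all overlaps equal $1$ on both sides (forcing $X-X=Y-Y$, whence the finite sign-check on zero-sum triples in $\{\pm a,\pm b,\pm(a+b)\}$ shows only the patterns $(+,+,-)$ and $(-,-,+)$ survive, so $Y=X+u$ or $Y=-X+u$, the latter killed by $|X-X|\neq 6$). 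I verified the two steps you flag as the main obstacles --- the interval-covering classification giving $\{u,u+1,u+3\}$ or $\{u,u+2,u+3\}$, and the homometric-reconstruction check --- and both are sound for $p\geq 11$; you are also right that Theorem~\ref{thm:hr} is unavailable here since it needs $|X+Y|\geq 7$, which is precisely why this lemma must be handled by hand. What your approach buys is that the lemma becomes verifiable within the paper rather than resting on an external reference; what the paper's approach buys is brevity, at the cost of leaving the reader to reconstruct exactly the kind of argument you wrote.
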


\begin{proof} 
If $|X+Y|=5$ then Theorem \ref{thm:vos} implies that both $X$ and $Y$ are arithmetic progressions with the same common difference, and therefore $X=Y+u$ for some $u\in\Z_p$. In the other case, $|X+Y|=6$, is tedious but not difficult to prove the claim (for more details see \cite{llm1}).

\end{proof} 

We will need the analogous of Proposition~\ref{prop:1,3} in the more specific case $|A|=|B|=3$.
 
\begin{proposition}\label{prop:1,3b}
Every $3$--coloring $\Z_p=A\cup B \cup C$ with $|A|=|B|=3$ and $|B|\leq|C|$ contains a rainbow solution of $x+y+cz=b$ where $c\neq 1$.
\end{proposition}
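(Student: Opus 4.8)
The plan is to split on whether $c=-2$. Suppose first that $c\neq -2$ (recall $c\neq 1$ by hypothesis). Since $a_1+a_2+a_3=2+c\neq 0$, Lemma~\ref{lem:b} says that $\Z_p=A\cup B\cup C$ is rainbow--free for $x+y+cz=b$ if and only if a fixed translate of it is rainbow--free for $x+y+cz=0$, that is, for $x+y=(-c)z$. The rainbow--free colorings of this last equation are classified by Theorem~\ref{thm:llm}, and since $-c\neq -1$ every alternative of that classification has a singleton as its smallest color class. Consequently there is no rainbow--free coloring with $|A|=|B|=3$, which proves the proposition when $c\neq -2$.

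It remains to handle $c=-2$, i.e.\ the equation $x+y-2z=b$. I would first record two symmetries to be used freely: the equation is invariant under translation (because $1+1-2=0$), and dilating by $\lambda$ merely replaces $b$ by $\lambda b$. Put $D:=A\cup B$, so that $|D|=6$ and, since $A\cap B=\emptyset$, also $|2D|=6$. Rainbow--freeness is equivalent to the three disjointness relations $(A+B)\cap(b+2C)=\emptyset$, $(A+C)\cap(b+2B)=\emptyset$ and $(B+C)\cap(b+2A)=\emptyset$. Using $2C=\Z_p\setminus 2D$ and the equivalence $w\notin A+C\iff w-A\subseteq D$, these become
\begin{equation*}
A+B\subseteq b+2D,\qquad 2B-A\subseteq D-b,\qquad 2A-B\subseteq D-b .
\end{equation*}
In particular $|A+B|\le 6$, while Cauchy--Davenport (valid since $A+B\neq\Z_p$) gives $|A+B|\ge 5$. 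Thus $|A+B|\in\{5,6\}$ and Lemma~\ref{lem:X3Y3} applies to the pair $(A,B)$.

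The contradiction is then extracted from the two alternatives of Lemma~\ref{lem:X3Y3}. In alternative (i), $A=B+u$, so $2B-A=(2B-B)-u$ and the middle relation forces $|2B-B|\le 6$. But $|2B-B|\ge 7$ for every $3$--element set $B$ when $p\ge 11$: assuming $|2B-B|\le 6$, Lemma~\ref{lem:X3Y3} applied to the pair $(2B,-B)$ would force $2B$ and $-B$ to be either translates of one another or an arithmetic progression together with a set of shape $\{v,v+d,v+3d\}$, and a short check (using $p>3$) shows neither is compatible with $2B$ being the dilation of $B$ by $2$. In alternative (ii), after dilating so the common difference equals $1$ and using that $A$ and $B$ play interchangeable roles (the coefficients of $x$ and $y$ are equal), I may take $A=\{w,w+1,w+2\}$ and $B=\{u,u+1,u+3\}$; then $2A-B=(2w-u)+\{-3,-1,0,1,2,3,4\}$ has $7$ elements, contradicting the third relation. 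The main obstacle is precisely this $c=-2$ analysis: one must show that dilation by $2$ is incompatible with the (almost) arithmetic--progression structure that Lemma~\ref{lem:X3Y3} imposes, and the uniform bound $|2B-B|\ge 7$ is the cleanest way I see to package that incompatibility.
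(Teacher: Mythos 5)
Your proof is correct, and it rests on the same pillars as the paper's own argument: Lemma~\ref{lem:b} together with Theorem~\ref{thm:llm} disposes of every $c\neq -2$, and for $x+y-2z=b$ you use the sumset bounds $5\leq |A+B|\leq 6$, Lemma~\ref{lem:X3Y3}, a counting contradiction for the progression--shaped configurations, and the $\langle -2\rangle$--periodicity (orbit) contradiction coming from Observation~\ref{rem:inv_dt}. What differs is the case decomposition, and yours is in fact the tighter one. The paper splits on translate relations between $A$ and the pair $B$, $-2B$: its Case 1 assumes $A$ is a translate of \emph{neither} $B$ nor $-2B$, and its Case 2 assumes it is a translate of \emph{both}, so the two mixed cases are, as written, never treated (they do yield to the same arguments, but the paper omits them). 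You instead split along the two alternatives of Lemma~\ref{lem:X3Y3} applied to $(A,B)$, which is exhaustive by construction: the progression alternative dies on the count $|2A-B|=7>6$, which is the same computation as the paper's Case 1 bound $\max\{|A-2B|,|B-2A|\}>6$; and the translate alternative $A=B+u$ reduces to your auxiliary claim $|2B-B|\geq 7$, proved by a second application of Lemma~\ref{lem:X3Y3} to $(2B,-B)$, whose translate branch ($B=-2B+u$, hence a $\langle -2\rangle$--periodic translate of $B$, impossible) is exactly the paper's Case 2, and whose progression branch covers precisely the mixed case the paper skips. Two small corrections to your write--up: the ``short check'' needs $p>5$ rather than $p>3$ (in $\Z_5$ the set $\{v,v+d,v+3d\}$ \emph{is} an arithmetic progression, since $5d=0$), and every appeal to Lemma~\ref{lem:X3Y3} requires $p\geq 11$; both points are harmless here, because $p=|A|+|B|+|C|\geq 9$ forces $p\geq 11$, but that inequality should be stated once at the start.
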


\begin{proof}
By Lemma~\ref{lem:b} and Theorem~\ref{thm:llm} the statement is true for $c\neq -2$. So we consider the equation $x+y-2z=b$, and suppose, by contradiction, that $\Z_p=A\cup B \cup C$ with $|A|=|B|=3$ and $|B|\leq|C|$ is a rainbow--free coloring for it.  We handle two cases.

\textbf{Case 1.} There is no element $w\in \Z_p$ such that, either $A=B+w$, or  $A=-2B+w$. Then by Lemma~\ref{lem:X3Y3} we know that $\{A,B\}=\{\{w,w+d,w+2d\},\{u,u+d,u+3d\}\}$. In both cases it is not difficult to see that $\max \{|A-2B|,|B-2A|\}>6$ which is a contradiction by (\ref{eq:basic}).

\textbf{Case 2.} There are $u_1$ and $u_2\in \Z_p$ such that $A=B+u_1$ and $A=-2B+u_2$. Then $B=-2B+u_2-u_1$. By the third outcome of Observation~\ref{rem:inv_dt} we know that there exist a $w\in \Z_p$ such that $B+w$ is invariant up to a $\langle -2\rangle$--dilation. Hence, $B+w=\{x,-2x,4x\}$ for some $x\in\Z_p$, and thereby $(-2)^3x=x$ which is a contradiction.

\end{proof}
 
 \begin{lemma}
\label{lem:lambda}
Let $\lambda\in\Z_p$ be such that $\lambda^4+\lambda^2+1=0$ and 
\begin{equation*}
X:=\{0,1,2,\lambda^2+1,\lambda^2+2,2\lambda^2+2\}.
\end{equation*}
 If $\{w,w+1,w+2\}\subseteq \lambda X$, then either $p\leq 7$ or
 \begin{equation*}
w= \left\{ \begin{array}{lll}
2 \lambda & \mbox{if $\lambda^3=1$} \\
2\lambda-2 & \mbox{if $\lambda^3=-1$}.\end{array} \right.
 \end{equation*}
  \end{lemma}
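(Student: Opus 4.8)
The plan is to turn the statement into an explicit computation by writing $\lambda X$ in closed form and then listing all three--term progressions of difference $1$ that it can contain. First I would exploit the factorization $\lambda^4+\lambda^2+1=(\lambda^2+\lambda+1)(\lambda^2-\lambda+1)$, so that the hypothesis forces either $\lambda^2+\lambda+1=0$ or $\lambda^2-\lambda+1=0$. In the first case $\lambda^3=1$ with $\lambda\neq 1$ (since $1+1+1=3\neq 0$ for $p>3$); in the second $\lambda^3=-1$ with $\lambda\neq -1$; and in either case $\lambda\neq 0$. This is precisely the dichotomy of the conclusion, so I would run the two cases in parallel. Because the statement is vacuous for $p\leq 7$, I assume throughout that $p\geq 11$.

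Next I would simplify $\lambda X=\{0,\lambda,2\lambda,\lambda^3+\lambda,\lambda^3+2\lambda,2\lambda^3+2\lambda\}$ using $\lambda^3=\pm 1$. When $\lambda^3=1$ this becomes $\lambda X=\{0,\lambda,2\lambda,\lambda+1,2\lambda+1,2\lambda+2\}$, and when $\lambda^3=-1$ it becomes $\lambda X=\{0,\lambda,2\lambda,\lambda-1,2\lambda-1,2\lambda-2\}$. A direct inspection then confirms the progressions advertised in the statement: $\{2\lambda,2\lambda+1,2\lambda+2\}\subseteq\lambda X$ in the first case and $\{2\lambda-2,2\lambda-1,2\lambda\}\subseteq\lambda X$ in the second, giving $w=2\lambda$ and $w=2\lambda-2$ respectively. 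The content of the lemma is the \emph{uniqueness} of these progressions.

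To prove uniqueness I would use that any $\{w,w+1,w+2\}\subseteq\lambda X$ consists of two pairs of elements of $\lambda X$ at distance exactly $1$ sharing the midpoint $w+1$; hence it suffices to find all pairs of $\lambda X$ at distance $1$ and see which ones chain. Enumerating the $\binom{6}{2}=15$ pairwise differences in the case $\lambda^3=1$, the three pairs $\{\lambda,\lambda+1\}$, $\{2\lambda,2\lambda+1\}$, $\{2\lambda+1,2\lambda+2\}$ always have difference $1$; the latter two share $2\lambda+1$ and produce the claimed progression, while $\{\lambda,\lambda+1\}$ shares no element with them and does not extend (this would require $\lambda-1$ or $\lambda+2$ to lie in $\lambda X$). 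The case $\lambda^3=-1$ is symmetric, the chain $\{2\lambda-2,2\lambda-1\},\{2\lambda-1,2\lambda\}$ yielding $w=2\lambda-2$.

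The main obstacle, and the only place where $p\geq 11$ is spent, is discarding all \emph{coincidental} progressions. Any further pair at distance $1$, any extension of the leftover pair $\{\lambda,\lambda+1\}$, or any collision among the six listed elements amounts to an equation $\alpha\lambda+\beta=0$ with small integers $\alpha\neq 0$ and $\beta$; substituting $\lambda=-\beta/\alpha$ into $\lambda^2\pm\lambda+1=0$ gives a nonzero rational (the quadratic has discriminant $-3$, hence no rational root) whose numerator has no prime factor exceeding $7$. Thus each such coincidence can occur only when $p\leq 7$, so for $p\geq 11$ the six elements are distinct and the progression found above is the only one. Carrying out this elimination over all fifteen differences and the two extension possibilities is routine but must be done exhaustively; I expect this bookkeeping to be the sole delicate point.
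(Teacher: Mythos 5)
Your proposal is correct and is essentially the paper's own argument: both derive $\lambda^3\in\{\pm 1\}$ from $\lambda^4+\lambda^2+1=0$ and then run an exhaustive finite case analysis in which every coincidental configuration forces $\lambda$ to equal a small rational number, whence $p\leq 7$ upon substituting it into $\lambda^2\pm\lambda+1=0$. The only difference is organizational --- the paper enumerates the six admissible positions of $\lambda^{-1}\{w,w+1,w+2\}$ inside $X$ (after ruling out subsets containing a unit-distance pair or $\{0,2\}$), whereas you dually enumerate unit-distance pairs inside $\lambda X$ --- and the bookkeeping you defer does go through: every numerator that actually arises lies in $\{1,3,7\}$, never reaching the a priori bound of $19$.
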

\begin{proof}
Write $Y:=\{w,w+1,w+2\}$. Since 
\begin{equation*}
(\lambda^2-1)(1+\lambda^2+\lambda^4)=\lambda^6-1=(\lambda^3-1)(\lambda^3+1),
\end{equation*}
we have $\lambda^3\in\{\pm 1\}$. Note that if $p>7$, then $\lambda^{-1}Y\subseteq X$ cannot contain $\{u,u+1\}$ for some $u\in\Z_p$ since $\{u,u+1\}\subseteq \lambda^{-1}Y$ implies $\lambda\in\{\pm1, \pm2\}$; in the same way, $\{0,2\}\subseteq \lambda^{-1}Y$ implies $p\leq 7$. We have the remaining cases:
\begin{itemize}
\item $\lambda^{-1}Y=\{0,\lambda^2+1,2\lambda^2+2\}$. Then $Y=\{0,\lambda^3+\lambda,2\lambda^3+2\lambda\}$ so $\lambda^3+\lambda\in\{\pm 1\}$ by Observation~\ref{rem:useful2}, and we conclude $-1=\lambda^4+\lambda^2\in\{\pm \lambda\}$ hence $p=3$.

\item $\lambda^{-1}Y=\{0,\lambda^2+2,2\lambda^2+2\}$. Then $Y=\{0,\lambda^3+2\lambda,2\lambda^3+2\lambda\}$. Then $\lambda^3\in\{\pm 1\}$ implies $\lambda=0$ which is impossible.

\item $\lambda^{-1}Y=\{1,\lambda^2+1,2\lambda^2+2\}$.  Then $Y=\{\lambda,\lambda^3+\lambda,2\lambda^3+2\lambda\}$. If $\lambda^3=1$, then $\lambda=-3$ and $p\leq 7$; in the same way, $\lambda^3=-1$ implies $p\leq 7$.

\item $\lambda^{-1}Y=\{1,\lambda^2+2,2\lambda^2+2\}$. Then $Y=\{\lambda,\lambda^3+2\lambda,2\lambda^3+2\lambda\}$. If $\lambda^3=1$, then $\lambda=-3$ and $p\leq 7$; in the same way, $\lambda^3=-1$ implies $p\leq 7$

\item $\lambda^{-1}Y=\{2,\lambda^2+1,2\lambda^2+2\}$. Then $Y=\{2\lambda,\lambda^3+\lambda,2\lambda^3+2\lambda\}$. Then $\lambda^3\in\{\pm 1\}$ implies $\lambda=0$ which is impossible or $p\leq 3$. 

\item $\lambda^{-1}Y=\{2,\lambda^2+1,2\lambda^2\}$. Then $Y=\{2\lambda,\lambda^3+2\lambda,2\lambda^3+2\lambda\}$; if $\lambda^3=1$ then $w=2\lambda$, and if $\lambda^3=-1$ then $w=2\lambda-2$. 
\end{itemize}
\end{proof}

Finally, we are ready to prove the lemma who dismiss  the existence of rainbow--free colorings with $|A|=|B|=3$.

 \begin{lemma}\label{lem:A3B3}
Every $3$--coloring $\Z_p=A\cup B \cup C$ with $|A|=|B|=3$ and $|B|\leq |C|$ contains a rainbow solution of Equation~(\ref{eq:general2}). 
\end{lemma}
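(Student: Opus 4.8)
The plan is to argue by contradiction: assume $\Z_p=A\cup B\cup C$ with $|A|=|B|=3\le|C|$ is rainbow--free for Equation~(\ref{eq:general2}) and derive a contradiction. Note first that $|C|=p-6\ge 3$ forces $p\ge 11$, so that $5\le |C|\le p-5$ and all of Lemma~\ref{lem:X3Y3}, Observation~\ref{rem:useful2} and Lemma~\ref{lem:lambda} are available. If two coefficients coincide I would normalize the equation to the shape $x+y+cz=b$ with $c\neq1$ and quote Proposition~\ref{prop:1,3b}; so from here on $a_1,a_2,a_3$ are pairwise distinct, and after scaling I take $a_1=1$, $a_2=\alpha$, $a_3=\beta$.

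The engine is the same containment used throughout Sections~\ref{sec:A=2} and \ref{sec:A=3}. Writing $D:=A\cup B$ (so $|D|=6$ and $C=\Z_p\setminus D$), rainbow--freeness is equivalent to $a_iX+a_jY\subseteq \Z_p\setminus(b-a_kC)=b-a_kD$ for every $\{i,j,k\}=\{1,2,3\}$; since the small sets are $A,B$, this gives the six inclusions $a_iA+a_jB\subseteq b-a_kD$ for all ordered pairs $(i,j)$, $i\neq j$. As $|b-a_kD|=6$ while Cauchy--Davenport gives $|a_iA+a_jB|\ge 5$, we obtain $|a_iA+a_jB|\in\{5,6\}$ for all six pairs, so Lemma~\ref{lem:X3Y3} applies to each: either (i) $a_iA$ and $a_jB$ are translates, or (ii) one is a $3$--term AP and the other a skip--AP with the same common difference. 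By Observation~\ref{rem:useful2} the ``type'' of $A$ and of $B$ --- genuine AP, skip--AP, or neither --- is invariant under dilation and translation, hence well defined.

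If at least one of $A,B$ is an AP or a skip--AP, I would dispose of the configuration by elementary bookkeeping: consistency of types across the six pairs forces every pair either into (i) with equal difference or into (ii) with a shared difference, and in each sub--case this reads as $a_i/a_j\in\{\mu,-\mu\}$ for one fixed $\mu$ and all ordered pairs. Composing these relations around $\{1,\alpha,\beta\}$ yields $\mu^2=\pm1$ and then $\alpha,\beta\in\{\mu,-\mu\}$, which makes two of the three coefficients equal --- contradicting distinctness. This handles the (AP,AP), (AP,skip) and (skip,skip) cases with no further tools.

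The genuinely hard case, and the one for which Lemma~\ref{lem:lambda} was prepared, is when \emph{neither} $A$ nor $B$ is an AP or a skip--AP. Then option (ii) is impossible for every pair (it requires a structured set), so all six pairs fall under (i): $a_iA=a_jB+u_{ij}$. Combining the relations for $(1,2)$ and $(2,1)$ shows that $\alpha A$ is a translate of $\alpha^{-1}A$; matching the three points of $A$ under this translation by the identity forces $\alpha=\pm1$ (hence a coefficient collision), while matching them by a $3$--cycle forces the successive differences of $A$ to lie in ratio $1:\alpha^2:\alpha^4$ with $1+\alpha^2+\alpha^4=0$, i.e.\ $\alpha^4+\alpha^2+1=0$, and likewise for $\beta$. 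Thus some coefficient ratio $\lambda$ is a primitive cube or sixth root of unity, and after an affine normalization $A\cup B$ becomes exactly the set $X$ of Lemma~\ref{lem:lambda}. One of the Step--2 containments then forces an interval $\{w,w+1,w+2\}\subseteq\lambda X$, whereupon Lemma~\ref{lem:lambda} gives $p\le 7$, contradicting $p\ge11$. The main obstacle is precisely this last case: the point easily missed is that option (i) of Lemma~\ref{lem:X3Y3} permits \emph{unstructured} $A,B$ provided they are matched by a nontrivial (cyclic) permutation, and it is exactly this cyclic matching that manufactures the relation $\lambda^4+\lambda^2+1=0$ and the rigid set $X$ --- a configuration invisible to the difference--ratio arithmetic of the structured cases and killed only by the dedicated computation in Lemma~\ref{lem:lambda}.
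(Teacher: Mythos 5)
Your proposal follows in outline the same route as the paper: reduce the case of two equal coefficients to Proposition~\ref{prop:1,3b}, use the six sumset containments together with Lemma~\ref{lem:X3Y3} and Observation~\ref{rem:useful2}, and in the ``all translates'' case turn the relations $a_1A=a_2B+r_1$, $a_2A=a_1B+r_2$ into an affine self--map of a $3$--set whose $3$--cycle action yields $\lambda^4+\lambda^2+1=0$, feeding into Lemma~\ref{lem:lambda}. But there is a decisive gap at the final step: Lemma~\ref{lem:lambda} does \emph{not} conclude $p\le 7$. Its conclusion is a dichotomy: either $p\le 7$, \emph{or} the interval occurs at the specific position $w=2\lambda$ (when $\lambda^3=1$), respectively $w=2\lambda-2$ (when $\lambda^3=-1$). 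You simply discard the second branch, yet eliminating it is the last third of the paper's proof: one first shows that the two size--$6$ sumsets $a_1A+a_2B$ and $a_1B+a_2A$, lying inside one common $6$--element set, must be \emph{equal}; rewrites this equality as $\lambda X=X+\tau$, where $X$ is the affine normalization of $B+B$ (not of $A\cup B$, as you assert); reads off $\{\tau,\tau+1,\tau+2\}\subseteq\lambda X$ from $\{0,1,2\}\subseteq X$; and then kills the exceptional branch by computation --- for $\lambda^3=1$, $\tau=2\lambda$ forces $2\lambda\mu=(r_2-r_1)a_1^{-1}$ and hence the contradiction $A=B$, while for $\lambda^3=-1$, $\tau=2\lambda-2$ forces the impossible identity $\{0,\lambda-1,\lambda\}=\{3\lambda-2,3\lambda-1,4\lambda\}$. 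None of this appears in your proposal, so the case you yourself single out as the hard one is not actually closed.

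There is a second, structural hole caused by your weaker normalization. You retain only ``pairwise distinct'' coefficients, but your permutation analysis needs the ratio to avoid $\pm1$. In the identity case you claim $\alpha=\pm1$ yields ``a coefficient collision''; it does not: $\alpha=-1$ is perfectly compatible with $1,-1,\beta$ pairwise distinct, and then $A=\alpha^2A+c=A+c$ gives only $c=0$ and the argument stalls. Moreover you consider only the identity and a $3$--cycle: the induced permutation of $A$ could be a transposition, which forces $\alpha^4=1$, and the subcase $\alpha^2=-1$ makes $A$ symmetric, hence a $3$--term arithmetic progression --- contradicting unstructuredness, but this must be argued (the paper handles it through its no--fixed--point claim, which is exactly where it uses $\lambda\notin\{\pm1\}$). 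Both holes are what the paper's stronger reduction is for: since a coefficient vector with all ratios in $\{\pm1\}$ has two equal entries after possibly multiplying the equation by $-1$, Proposition~\ref{prop:1,3b} lets one assume $a_1\notin\{\pm a_2\}$ and run the entire argument on that single pair. Finally, in your structured case the conclusion $\alpha,\beta\in\{\mu,-\mu\}$ with $\mu^2=\pm1$ does not by itself give two equal coefficients (take $\mu^2=-1$, $\alpha=-\beta=\mu$); you also need the relation $\alpha/\beta=-1\in\{\pm\mu\}$ coming from the $(2,3)$ pair to force $\mu=\pm1$ --- a routine but missing step.
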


\begin{proof} 
By Proposition \ref{prop:1,3b} we may assume that there are $a_i\not\in \{\pm a_j\}$, without loss of generality let $a_1\not\in \{\pm a_2\}$.  We will show that:
\begin{equation}\label{eq:last1}
|a_1A+a_2B\cup a_1B+a_2A|>6
\end{equation}
which  implies the lemma since $a_1A+a_2B\cup a_1B+a_2A$ would not be contained in $\Z_p\setminus -a_3C$. 

 If $|a_1A+a_2B|=5$, then by Theorem \ref{thm:vos}
\begin{equation*}
a_1A=\{u,u+d,u+2d\}\qquad\text{and}\qquad a_2B=\{w,w+d,w+2d\}
\end{equation*}
for some $u,w,d\in\Z_p$. Lemma \ref{lem:X3Y3} and Observation \ref{rem:useful2} imply $|a_1B+a_2A|>6$. In the same way if $|a_1B+a_2A|=5$, Equation (\ref{eq:last1}) follows.

Suppose there are not rainbow solutions of  Equation~(\ref{eq:general2}) so by the analysis done at the beginning this section and last paragraph 
\begin{equation}\label{eq:last2}
|a_1A+a_2B|=|a_1B+a_2A|=6
\end{equation}
 and  Equation (\ref{eq:last1}) needs to be  false. 
 
First assume either there is not $r\in \Z_p$ such that $a_1A=a_2B+r$ or there is not $r\in \Z_p$ such that $a_2A=a_1B+r$; Without loss of generality there is not $r\in \Z_p$ such that $a_1A=a_2B+r$ thus by Lemma \ref{lem:X3Y3} there are $u,w,d\in\Z_p$ such that
\begin{equation*}
\{a_1A, a_2B\}=\{\{u,u+d,u+2d\}, \{w,w+d,w+3d\}\}
\end{equation*} 
Lemma \ref{lem:X3Y3} and Observation \ref{rem:useful2} imply $|a_1B+a_2A|>6$ which contradicts our assumption. 
 
 Now take $r_1,r_2\in \Z_p$ such that $a_1A=a_2B+r_1$ and $a_2A=a_1B+r_2$; Write $\lambda:=a_2a_1^{-1}$ and $\mu:=a_2a_1^{-2}r_1-a_1^{-1}r_2$ so 
 \begin{equation}\label{eq:last3}
 B=\lambda^2B+\mu. 
 \end{equation}
 If $\lambda^2=-1$, $B$ is an arithmetic progression; consequently $a_2B, a_1A$ are arithmetic progressions with the same common difference contradicting Equation (\ref{eq:last2}). From now on we assume $\lambda^2\neq -1$; write $B=\{b_1,b_2,b_3\}$. We claim there is not $b'\in B$ such that $b'=\lambda^2b'+\mu$; indeed if there is a $b'$ like this, then $b''\neq \lambda^2b''+\mu$ for all $b''\in B\setminus\{b'\}$   but Equation (\ref{eq:last3}) yields
 \begin{equation*}
 b''=\lambda^2(\lambda^2b''+\mu)+\mu=\lambda^4b''+(\lambda^2+1)\mu
 \end{equation*}
 so $b''=\lambda^2b''+\mu$ since $\lambda^2\neq -1$. Thus without loss of generality
 \begin{equation*}
 b_2=\lambda^2b_1+\mu, \hspace{.4cm}b_3=\lambda^2b_2+\mu,   \hspace{.4cm} b_1=\lambda^2b_3+\mu, 
 \end{equation*}
 and particularly $1+\lambda^2+\lambda^4=0$. By Equation (\ref{eq:last2}) and since we assumed Equation (\ref{eq:last1}) is false
 \begin{equation*}
 a_1A+a_2B=a_1B+a_2A
 \end{equation*}
 so
 \begin{equation*}
 \lambda B+\lambda B=B+B-(z_1-z_2)a_1^{-1}
 \end{equation*}
 hence adding $-2b_1$ and multiplying by $\theta:=((\lambda^2-1)b_1+\mu)^{-1}$ to this equation we obtain 
 \begin{align}
 \lambda\{0,1,2,\lambda^2+1,\lambda^2+2,2\lambda^2+2\}&=\{0,1,2,\lambda^2+1,\lambda^2+2,2\lambda^2+2\}\nonumber\\
 &\quad+((r_2-r_1)a_1^{-1}+2b_1(1-\lambda))\theta.\label{eq:last4}
 \end{align}
 By Lemma \ref{lem:lambda} we have either 
 \begin{equation}\label{eq:last5}
 \lambda^3=1\qquad\text{and}\qquad 2\lambda=((r_2-r_1)a_1^{-1}+2b_1(1-\lambda))\theta
 \end{equation} 
 or 
  \begin{equation}\label{eq:last6}
 \lambda^3=-1\qquad\text{and}\qquad 2\lambda-2=((r_2-r_1)a_1^{-1}+2b_1(1-\lambda))\theta.
 \end{equation}
 If Equation (\ref{eq:last5}) holds true, then 
 \begin{equation*}
 2\lambda\mu=(r_2-r_1)a_1^{-1}
 \end{equation*}
 and thereby
  \begin{equation}\label{eq:last7}
r_2(1+2\lambda)=r_1(1+2\lambda^2);
 \end{equation}
 on the other hand 
 \begin{equation*}
 A=\lambda B+a_1^{-1}r_1
 \end{equation*}
 and  
 \begin{equation*}
 B=\lambda B+(\lambda^2+1)\mu
 \end{equation*}
 by Equation (\ref{eq:last3}) however Equation (\ref{eq:last7}) implies $A=B$ which is a contradiction.  If Equation (\ref{eq:last6}) holds true, then by Equation (\ref{eq:last4})
 \begin{equation*}
 \{0,\lambda-1,\lambda\}=\{3\lambda-2,3\lambda-1,4\lambda\}
 \end{equation*}
 which is impossible and thereby Equation (\ref{eq:last1}). 
 \end{proof}

\end{document}